\documentclass[a4paper,11pt,reqno]{amsart}

\usepackage[utf8]{inputenc}

\usepackage[a4paper,top=2cm,bottom=2cm,left=2.2cm,right=2.2cm]{geometry}

\usepackage{titlesec}
\usepackage{lipsum}
\usepackage{enumerate}
\usepackage{enumitem}
\usepackage{mathrsfs}
\usepackage{amsmath,amssymb,amsthm}
\usepackage{mathtools}
\usepackage{graphicx, float, subfigure}
\usepackage{url}
\usepackage{tikz}
\usepackage{bbold}
\usepackage{mleftright}
\usepackage{color}
\usepackage{xcolor}
\usepackage{colortbl}
\usepackage{array}

\usepackage{ytableau}
\usepackage[all,cmtip]{xy}
\usepackage{multirow}
\usepackage{makecell}
\usepackage[colorinlistoftodos]{todonotes}
\usepackage[colorlinks=true, allcolors=red]{hyperref}
\usepackage{harpoon}
\usepackage{MnSymbol}
\usepackage{esvect}
\usepackage{diagbox}
\usepackage[title]{appendix}
\usepackage{tikz-cd}

\theoremstyle{plain}
\newtheorem{theorem}{\scshape Theorem}[section]
\newtheorem{proposition}[theorem]{\scshape Proposition}
\newtheorem{lemma}[theorem]{\scshape Lemma}
\newtheorem{corollary}[theorem]{\scshape Corollary}
\newtheorem{conjecture}[theorem]{\scshape Conjecture}

\newtheorem*{assumption*}{\scshape Assumption}

\newtheorem*{claim*}{Claim}

\theoremstyle{definition}

\newtheorem{definition}[theorem]{\scshape Definition}
\newtheorem{remark}[theorem]{\scshape Remark}
\newtheorem{example}[theorem]{\scshape Example}

\newcommand{\Tam}{\mathsf{Tam}}
\renewcommand{\H}{\mathsf{H}}
\newcommand{\T}{\mathsf{T}}
\renewcommand{\Row}{\mathsf{Row}}
\newcommand{\alt}{\mathsf{alt}}
\newcommand{\Pop}{\mathsf{Pop}}

\numberwithin{equation}{section}

\titleformat{\section}{\centering\bfseries}{\thesection}{1em}{\MakeUppercase}
\titleformat{\subsection}{\bfseries}{\thesubsection}{1em}{}

\begin{document}
\title{Rowmotion on hook and two-row alt $\nu$-Tamari lattices}

\author{Sen-Peng Eu, Vei-Cheng Hioe, and Yi-Lin Lee}
\address{(S.-P. Eu) Department of Mathematics, National Taiwan Normal University, Taipei, Taiwan}
\email{speu@math.ntnu.edu.tw}

\address{(V.-C. Hioe) Department of Mathematics, National Taiwan Normal University, Taipei, Taiwan}
\email{Victor.hioe0829@gmail.com}

\address{(Y.-L. Lee) Department of Mathematics, National Taiwan Normal University, Taipei, Taiwan}
\email{yillee@ntnu.edu.tw}

\subjclass{05A15, 05E18, 06D10, 06D75}
\keywords{Alt $\nu$-Tamari lattices; cyclic sieving; homometry; rowmotion}

\begin{abstract}
    In 2024, Ceballos and Chenevi{\`e}re introduced alt $\nu$-Tamari lattices, parameterized by a lattice path $\nu$ and an increment vector $\delta$, as a common generalization of $\nu$-Tamari and $\nu$-Dyck lattices. We study rowmotion on two families: the alt hook-Tamari lattice $\mathsf{H}_{\delta}(a,b)$ (where $\nu=EN^{a-1}E^{b-1}N$) and the alt $2$-row-Tamari lattice $\mathsf{T}_{\delta}(a,b)$ (where $\nu=E^aNE^bN$).

    We explicitly determine the orbit structures of $\mathsf{H}_{\delta}(a,b)$ and $\mathsf{T}_{\delta}(a,b)$ under rowmotion, and prove that their orbit structures are independent of the increment vector $\delta$. As a consequence, we show that rowmotion on $\mathsf{H}_{\delta}(a,b)$ exhibits the cyclic sieving phenomenon. We also compute orbit sums for several natural statistics. In the hook case, we evaluate the down-degree, peak, valley, and area statistics; in the $2$-row case, we focus on the down-degree statistic. All of these---except for the area statistic---are homometric under rowmotion.

    Regarding the methodology of this paper, our results in the hook case are obtained by applying a simple local modification to their Hasse diagrams. In the $2$-row case, we introduce a switching property for semidistributive lattices, which allows us to compare the orbit structures arising from different increment vectors.
\end{abstract}

\maketitle

\section{Introduction}\label{sec.intro}

The rapidly developing field of dynamical algebraic combinatorics aims to study the structure of operators acting on objects in algebraic combinatorics. One of the most extensively studied operators is \emph{rowmotion} on various important objects, such as Boolean lattices, chains, root posets, minuscule posets, Tamari lattices, and Cambrian lattices; see \cite[Section 7]{TW19} for a comprehensive review. Recently, rowmotion on $321$-avoiding permutations \cite{AE23}, fence posets \cite{EPRS23}, rooted trees \cite{DKLLSS23}, $m$-Tamari lattices and biCambrian lattices \cite{DL24} have been explored, and these works produced many interesting results.

Rowmotion is an operator, denoted by $\Row$, acting on the set of order ideals $\mathsf{J}(P)$ of a finite poset $P$. According to the fundamental theorem of finite distributive lattices, $\mathsf{J}(P)$ is isomorphic to a finite distributive lattice when ordered by inclusion (see \cite[Theorem 3.4.1]{ECI} for example). The concept of rowmotion has since been extended to various broader classes of lattices. In 2019, Barnard \cite{Barnard19} defined rowmotion on \textit{semidistributive lattices}, while Thomas and Williams \cite{TW19} introduced a version for \textit{trim lattices}, both of which generalize the distributive case. In 2023, Defant and Williams \cite{DW23} unified and further generalized these definitions to the class of \textit{semidistrim lattices}. 

The \textit{Tamari lattice}, a central object in algebraic combinatorics, provides a partial ordering on the set of Dyck paths. Since its introduction by Tamari \cite{Tamari62}, this lattice has received significant attention, and it has been generalized from various mathematical perspectives. In 2012, Bergeron and Préville-Ratelle \cite{BPR12} introduced \textit{$m$-Tamari lattices} to provide conjectural combinatorial interpretations for the dimensions of certain trivariate diagonal harmonic spaces. Subsequently, in 2017, Préville-Ratelle and Viennot \cite{PRV17} extended these to \textit{$\nu$-Tamari lattices} $\Tam(\nu)$, where $\nu$ is a given lattice path on the square lattice that starts at the origin and consists of unit north steps $N$ and unit east steps $E$. Throughout the paper, we may identify such a path with a finite word over the alphabet $\{N,E\}$, and use exponents to denote repeated concatenation of a word with itself; see Figure \ref{fig.expaths} for examples. The $n$th \textit{$m$-Tamari lattice} is a special case of $\nu$-Tamari lattices by taking $\nu = (NE^m)^n$; for $m=1$, this further reduces to the classical $n$th Tamari lattice.

Defant and Lin \cite{DL24} have investigated rowmotion on $m$-Tamari lattices. They fully determined the orbit structure of $m$-Tamari lattices under rowmotion, and proved that it exhibits the \textit{cyclic sieving phenomenon}. Additionally, they proved that the \textit{down-degree statistic} is \textit{homomesic} under rowmotion.

In 2024, Ceballos and Chenevi\`ere \cite{CC24} introduced a more general family of lattices, the \textit{alt $\nu$-Tamari lattices} $\Tam_{\delta}(\nu)$. These lattices consist of the same elements as $\nu$-Tamari lattices but carry different partial orderings depending on a vector $\delta$. They proved that the number of linear intervals in $\Tam_{\delta}(\nu)$ is independent of $\delta$. Building on a unified framework of \textit{framing lattices} by von Bell and Ceballos \cite{vBC24, vBC25}, it is now known that the various Tamari lattices mentioned above are all semidistributive. Consequently, it is a natural progression to study the behavior of rowmotion on these structures. Based on our data, we make the following conjecture. We would like to point out that this conjecture was independently\footnote{Private communication with Sam Hopkins on April 16, 2026.} formulated and proved by Adenbaum et al. in their forthcoming paper \cite{ABC+26+}.
\begin{conjecture}\label{conj}
    Let $\nu$ be a lattice path. The orbit structure of the alt $\nu$-Tamari lattice $\Tam_{\delta}(\nu)$ is independent of the increment vector $\delta$. 
\end{conjecture}

In this paper, we study rowmotion on two families of the alt $\nu$-Tamari lattices. Our results not only provide further evidence for Conjecture \ref{conj}, but also completely determine their orbit structures. 
\begin{itemize}
    \item[(1)] The lattice path $\nu = EN^{a-1}E^{b-1}N$, where $a$ and $b$ are positive integers. All the elements of $\Tam_{\delta}(\nu)$ are contained in the region bounded by $\nu$ and $N^{a}E^{b}$, that is, the Young diagram of the partition $(b,1^{a-1})$, known as the \textit{hook shape}. We refer to this structure as the \textit{alt hook-Tamari lattice} and denote it by $\H_{\delta}(a,b)$.

    \item[(2)] The lattice path $\nu = E^aNE^bN$, where $a$ and $b$ are nonnegative integers. All the elements of $\Tam_{\delta}(\nu)$ are contained in the region bounded by $\nu$ and $N^2E^{a+b}$, that is, the Young diagram of the partition $(a+b,a)$. We refer to this structure as the \textit{alt $2$-row-Tamari lattice} and denote it by $\T_{\delta}(a,b)$.
\end{itemize}
Figure \ref{fig.exhook} (resp., Figure \ref{fig.ex2row}) shows an element $\mu$ of the $\nu$-Tamari lattice where $\nu$ is the southeastern boundary of a hook shape (resp., $2$-row shape).
\begin{figure}[hbt!]
    \centering
    \subfigure[]{\label{fig.exhook}\includegraphics[width=0.25\textwidth]{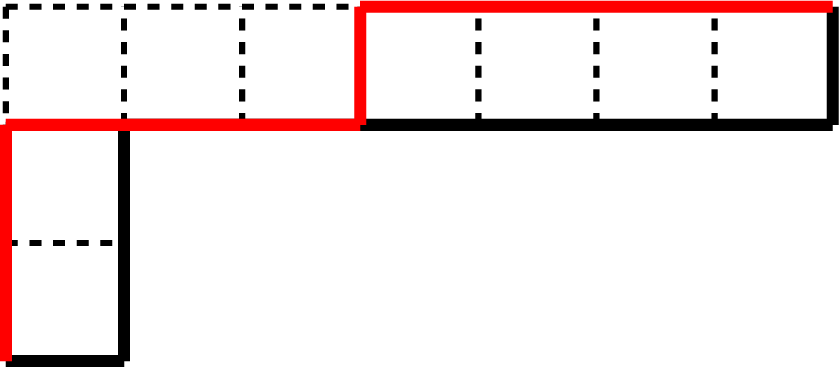}}
    \hspace{20mm}
    \subfigure[]{\label{fig.ex2row}\includegraphics[width=0.25\textwidth]{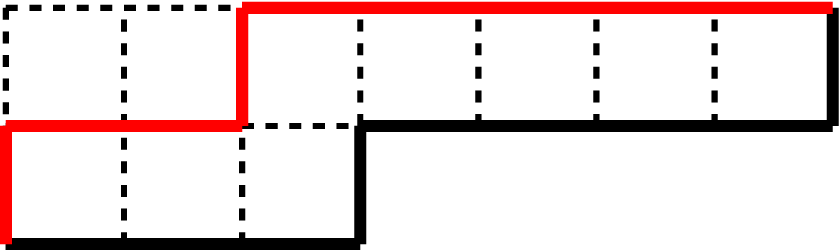}}
    \caption{(a) An element $\mu = N^2E^3NE^4$ (red) of the $\nu$-Tamari lattice with $\nu = EN^2E^6N$ (black). (b) An element $\mu = NE^2NE^5$ (red) of the $\nu$-Tamari lattice with $\nu = E^3NE^4N$ (black).}\label{fig.expaths}
\end{figure}

We state our main contributions as follows. First, we prove that the orbit structures of $\H_{\delta}(a,b)$ and $\T_{\delta}(a,b)$ under rowmotion are independent of the increment vector $\delta$; we also fully determine their orbit structures (Theorems \ref{thm.rowhook}, \ref{thm.2roworbit}, and \ref{thm.orbit}). Consequently, we show that rowmotion on $\H_{\delta}(a,b)$ exhibits the cyclic sieving phenomenon (Corollary \ref{cor.csp}). 

Second, we explicitly compute several statistics, including the down-degree, the number of peaks and valleys, and the area, over each orbit of $\H_{\delta}(a,b)$; we show that the first three of these statistics are homometric under rowmotion (Theorem \ref{thm.stathook}). For $\T_{\delta}(a,b)$, we compute the down-degree statistic and show that it is homometric under rowmotion (Theorem \ref{thm.orbit}). In the hook case, we notice that the rowmotion orbit structure coincides with that of the antichains of the fence poset with two segments, which was previously studied by Elizalde, Plante, Roby, and Sagan \cite{EPRS23}. Furthermore, our formulas for the down-degree and area statistics on $\H_{\delta}(a,b)$ coincide with the formulas for the statistics $\chi$ and $\hat{\chi}$ on fence posets introduced in \cite{EPRS23}; see Remark \ref{rmk}.

We rely on an equivalent expression of the elements of alt $\nu$-Tamari lattices, called \textit{$\nu$-bracket vectors}, to characterize the Hasse diagrams of $\H_{\delta}(a,b)$ and $\T_{\delta}(a,b)$. For rowmotion on the alt hook-Tamari lattices $\H_{\delta}(a,b)$, we apply a simple edge contraction on their Hasse diagrams to prove the invariance of orbit structures and determine their orbit structures explicitly. For rowmotion on the alt $2$-row-Tamari lattices $\T_{\delta}(a,b)$, we provide a novel switching property on semidistributive lattices (Theorem \ref{thm.switchrow}) to show the invariance of their orbit structures. We compute the orbit structures by relating them to linear congruence equations and solving these equations.

The rest of this paper is organized as follows. In Section \ref{sec.pre}, we provide the necessary background, including the definitions of alt $\nu$-Tamari lattices, rowmotion on semidistributive lattices, and several concepts from dynamical algebraic combinatorics. Section \ref{sec.hook} presents our investigations of rowmotion on the alt hook-Tamari lattices $\H_{\delta}(a,b)$, while Section \ref{sec.2row} presents our studies of rowmotion on the alt 2-row-Tamari lattices $\T_{\delta}(a,b)$. Finally, Section \ref{sec.remarks} contains concluding remarks and further discussion.

\section{Preliminaries}\label{sec.pre}

We assume a basic familiarity with poset theory, as presented in \cite[Chapter 3]{ECI}. For a lattice $(L, \leq)$ considered in this paper, $L$ is finite, and we embed its Hasse diagram into a rectangular coordinate system. Each element $x \in L$ is identified with a specific lattice point $(x_1, x_2)_L$; for brevity, we omit the subscript $L$ when the context is clear. For any elements $x=(x_1, x_2)$ and $y=(y_1, y_2)$ in $L$, we have $x \leq y$ if and only if $x_1 \leq y_1$ and $x_2 \leq y_2$. We also assume that a lattice path $\nu$ starts at the origin. 

In this section, we introduce some variants of $\nu$-Tamari lattices (Section \ref{sec.tam}), the $\nu$-bracket vectors (Section \ref{sec.bracket}), and rowmotion on semidistributive lattices (Section \ref{sec.row}). We also introduce the concept of cyclic sieving phenomenon (Section \ref{sec.csp}), homomesy, and homometry (Section \ref{sec.homomesy}) that will be discussed in this paper.

\subsection{The $\nu$-Dyck, $\nu$-Tamari, and alt $\nu$-Tamari lattices}\label{sec.tam}

    Let $\nu$ be a lattice path on the square lattice. A \textit{$\nu$-path} $\mu$ is a lattice path consisting of $N$ and $E$ steps that shares the same endpoints as $\nu$ and stays weakly above $\nu$. Let $P(\nu)$ denote the collection of all $\nu$-paths. By equipping $P(\nu)$ with different partial orders, one obtains the $\nu$-Dyck lattice, the $\nu$-Tamari lattice, and their common generalization, the alt $\nu$-Tamari lattices. These structures are defined as follows.
    
    The $\nu$-Dyck lattice, denoted by $\mathsf{Dyck}(\nu)$, is the poset on $P(\nu)$ where $p \leq q$ if $q$ stays weakly above $p$. The cover relation $p \lessdot q$ in $\mathsf{Dyck}(\nu)$ corresponds to the operation of replacing a valley $EN$ with a peak $NE$ at some specific position in the $\nu$-path $p$. In the specific case where $\nu = (NE^m)^n$, $\mathsf{Dyck}(\nu)$ reduces to the Dyck lattice on the set of $m$-Dyck paths. Moreover, when $m=1$, $\mathsf{Dyck}(\nu)$ reduces further to the classical Dyck lattice on the set of usual Dyck paths. The Hasse diagram of $\mathsf{Dyck}(\nu)$ with $\nu=EN^2E^2N$ is shown in Figure \ref{fig.alt0}.

    The $\nu$-Tamari lattice, denoted by $\Tam(\nu)$, was introduced by Préville-Ratelle and Viennot \cite{PRV17} using the set of $\nu$-paths $P(\nu)$. For any lattice point $x$ on a path $\mu \in P(\nu)$, define its \textit{altitude}\footnote{It is also called the \textit{horizontal distance} in \cite{PRV17}.} $\alt(\nu,x)$ to be the maximum number of $E$ steps that can be added to the right of $x$ without crossing $\nu$. Consider a valley $EN$ of $\mu$, and let $x$ be the lattice point between these $E$ and $N$ steps. Let $y$ be the first lattice point following $x$ such that $\alt(\nu,y) = \alt(\nu,x)$, and let $\mu(x,y)$ be the subpath of $\mu$ from $x$ to $y$. We define $\mu^{\prime}$ to be the path obtained from $\mu$ by interchanging the subpath $\mu(x,y)$ with the $E$ step immediately preceding $x$. This operation, denoted by $\mu \lessdot \mu^{\prime}$, is called the \textit{$\nu$-rotation} of $\mu$ at $x$. The $\nu$-Tamari lattice $\Tam(\nu)$ is the poset on $P(\nu)$ whose cover relations are given by these $\nu$-rotations; it was shown in \cite{PRV17} that $\Tam(\nu)$ is a lattice. The Hasse diagram of $\Tam(\nu)$ with $\nu=EN^2E^2N$ is shown in Figure \ref{fig.alt2}. Similarly, when specializing $\nu = (NE^m)^n$, $\Tam(\nu)$ is the $m$-Tamari lattices on the set of $m$-Dyck paths; when $m=1$, $\Tam(\nu)$ reduces to the classical Tamari lattices on the set of Dyck paths. 
    
    Alternative characterizations of $\Tam(\nu)$ using $\nu$-trees and $\nu$-bracket vectors were presented by Ceballos, Padrol, and Sarmiento \cite{CPS20}. A primary tool in our work is the interpretation of $\Tam(\nu)$ via $\nu$-bracket vectors, which will be discussed in detail in Section \ref{sec.bracket}.

    A common generalization of the $\nu$-Dyck and $\nu$-Tamari lattices was introduced by Ceballos and Chenevi\`ere \cite{CC24}, which is called the alt $\nu$-Tamari lattices. For a lattice path $\nu$, we may encode $\nu$ as a sequence of nonnegative integers $(\nu_0,\nu_1,\dots,\nu_n)$, where $n$ is the number of $N$ steps in $\nu$, $\nu_0$ is the number of initial $E$ steps, and $\nu_i$ is the number of consecutive $E$ steps immediately following the $i$th $N$ step in $\nu$. Note that the sum of this sequence $\nu_0+\cdots+\nu_n$ is the total number of $E$ steps in $\nu$. 

    We say $\delta=(\delta_1,\dots,\delta_n)$ is an \textit{increment vector} of $\nu$ if $\delta_i$ is an integer with $0 \leq \delta_i \leq \nu_i$ for $i=1,\dots,n$. Note that the vector $\delta$ begins with the index $1$ while the sequence $\nu$ begins with the index $0$. For any lattice point $x$ on a path $\mu \in P(\nu)$, define its \textit{$\delta$-altitude} $\alt_{\delta}(\nu,x)$ recursively as follows. 
    \begin{equation}\label{eq.alt}
        \alt_{\delta}(\nu,x) = \begin{cases}
            0, & \text{ if $x$ is the initial point of $\mu$,}\\
            \alt_{\delta}(\nu,w)-1, & \text{ if $\mu(w,x)$ is an $E$ step in $\mu$,}\\
            \alt_{\delta}(\nu,w)+\delta_i, & \text{ if $\mu(w,x)$ is the $i$th $N$ step in $\mu$,}
        \end{cases}
    \end{equation}
    where $w$ is the lattice point immediately preceding $x$ in $\mu$ and $\mu(w,x)$ is the single step from $w$ to $x$. 
    
    Consider a valley $EN$ of $\mu$, and let $x$ be the lattice point between these $E$ and $N$ steps. Let $y$ be the first lattice point following $x$ such that $\alt_{\delta}(\nu,y) = \alt_{\delta}(\nu,x)$, and let $\mu(x,y)$ be the subpath of $\mu$ from $x$ to $y$. We define $\mu^{\prime}$ to be the path obtained from $\mu$ by interchanging the subpath $\mu(x,y)$ with the $E$ step immediately preceding $x$. This operation, denoted by $\mu \lessdot \mu^{\prime}$, is called the \textit{$\delta$-rotation} of $\mu$ at $x$.

    Given a lattice path $\nu$, let $\delta$ be an increment vector of $\nu$. The alt $\nu$-Tamari lattice $\Tam_{\delta}(\nu)$ is a poset on $P(\nu)$ whose cover relations are given by these $\delta$-rotations; it was shown in \cite[Corollary 4.5]{CC24} that $\Tam_{\delta}(\nu)$ is a lattice. See Figure \ref{fig.alttam} for examples. 

\begin{figure}[htbp]
    \centering
    \subfigure[]{\label{fig.alt0}\includegraphics[width=0.4\textwidth]{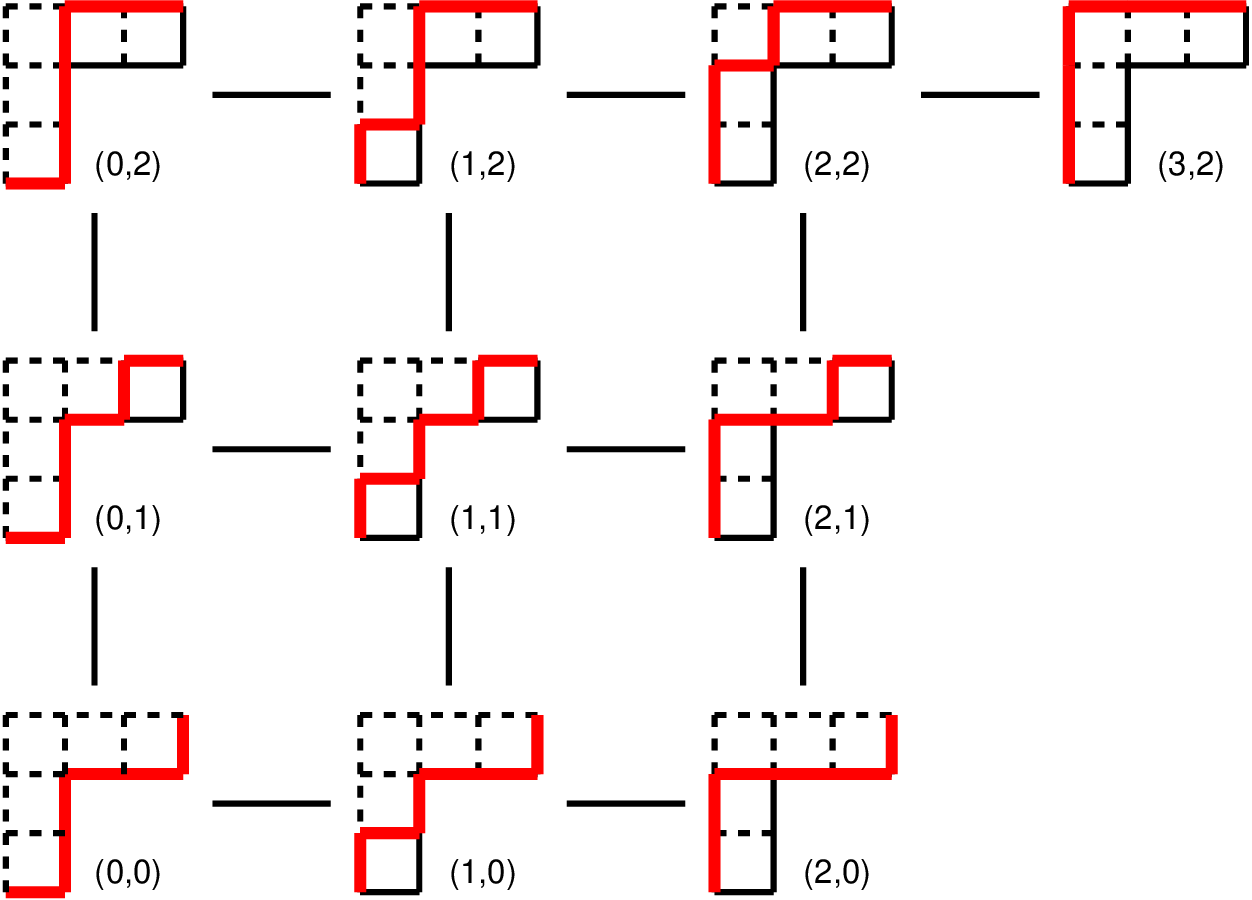}}
    \hspace{15mm}
    \subfigure[]{\label{fig.alt1}\includegraphics[width=0.4\textwidth]{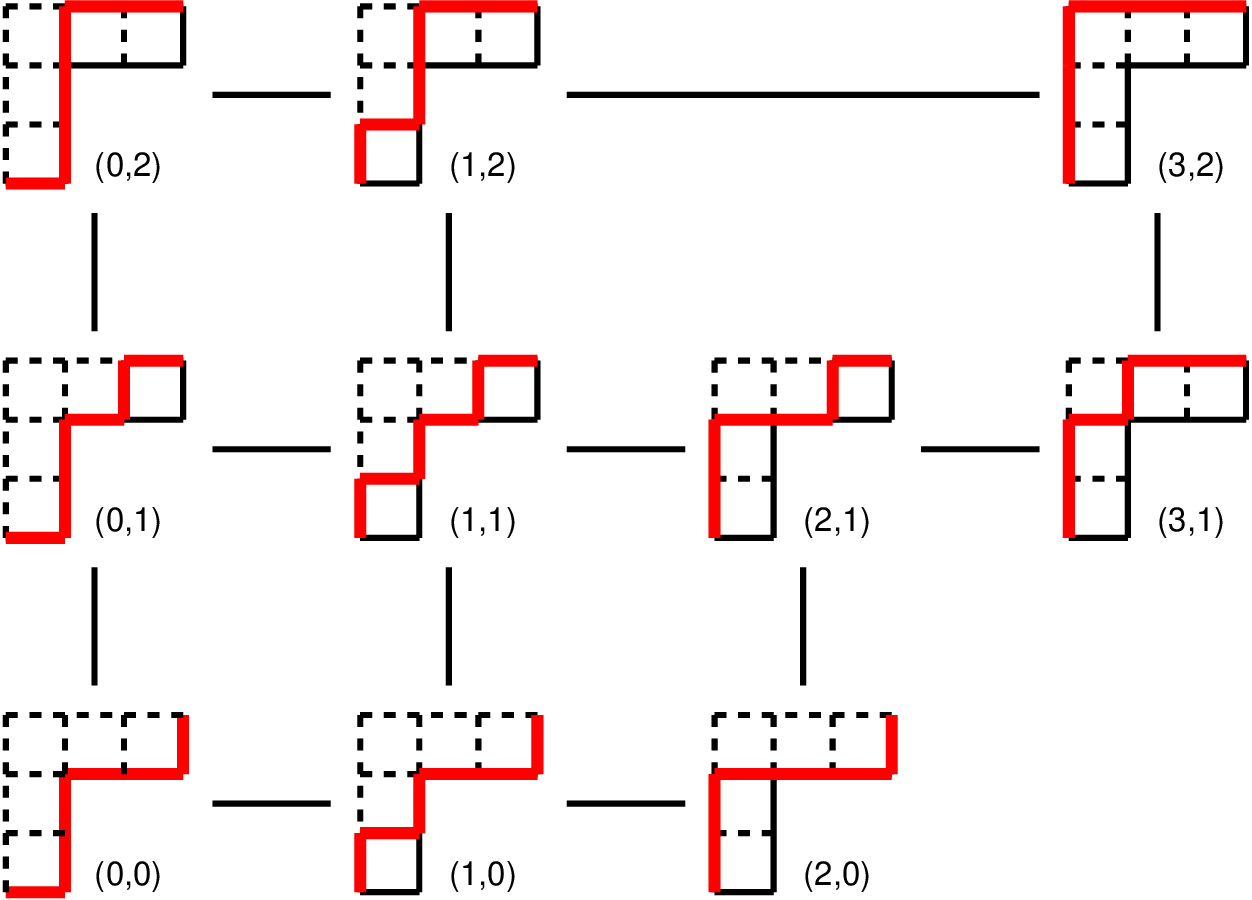}}
    \bigskip
    \vspace{2mm}
    \subfigure[]{\label{fig.alt2}\includegraphics[width=0.4\textwidth]{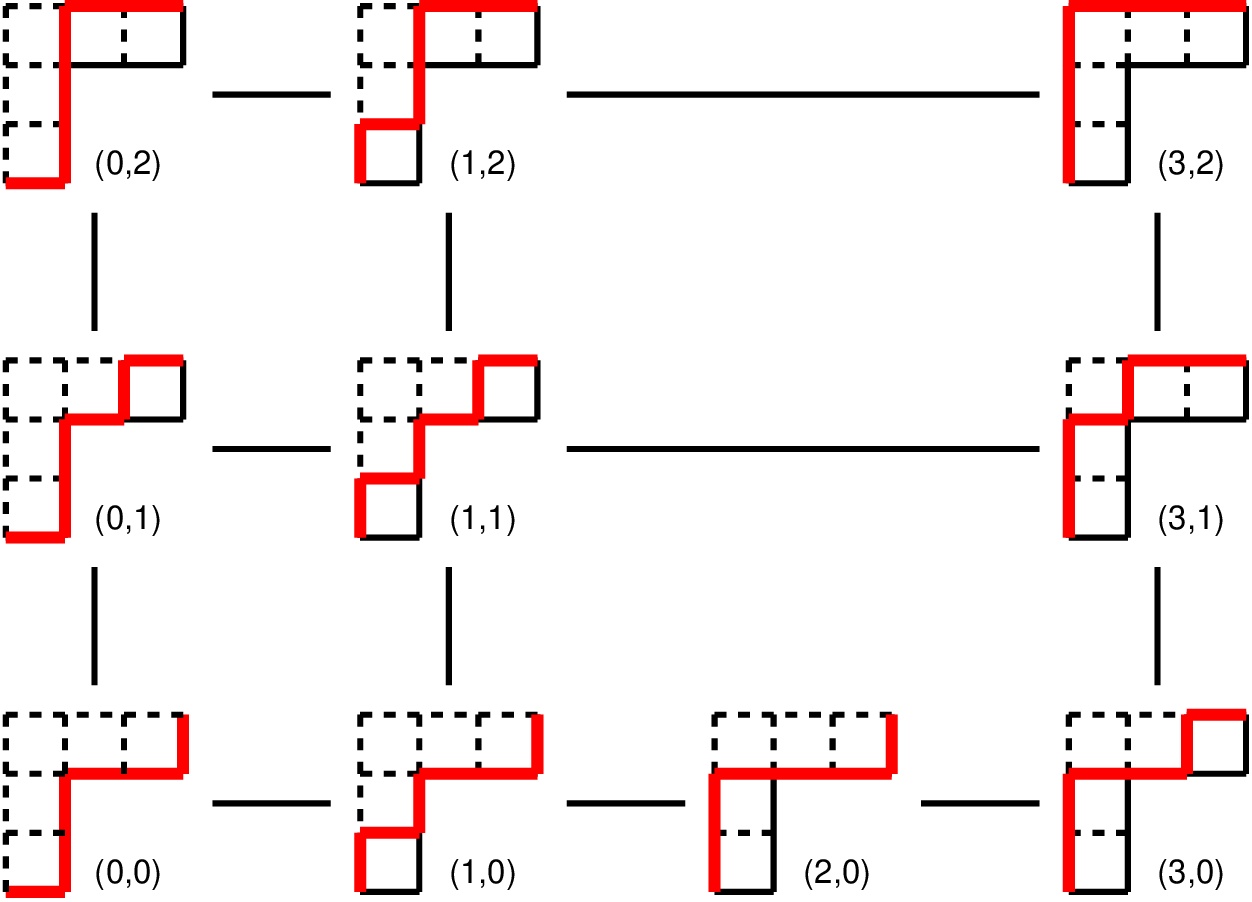}}
    \caption{The Hasse diagram of the alt hook-Tamari lattice $\Tam_{\delta}(\nu)$, where $\nu=EN^2E^2N$ and (a) $\delta=(0,0)$, (b) $\delta=(1,0)$, and (c) $\delta=(2,0)$. The coordinate of each element is also given. }\label{fig.alttam}
\end{figure}

    As pointed out in the introduction, the alt $\nu$-Tamari lattices are a subfamily of framing lattices introduced in the work of von Bell and Ceballos \cite{vBC24,vBC25}. They proved that the framing lattices have many nice properties---in particular---they are semidistributive (\cite[Theorem A]{vBC25}).
    
    Remarkably, there are two extremal choices of the increment vector $\delta$. If we take $\delta_i=\nu_i$ for all $i$, then the alt $\nu$-Tamari lattice coincides with the $\nu$-Tamari lattice. This follows from the fact (\cite[Remark 4.3]{CC24}) that $\alt_{\delta}(\nu,x) = \alt(\nu,x) - \nu_0$, hence the $\delta$-rotation is equivalent to the $\nu$-rotation. On the other hand, if we take $\delta_i=0$ for all $i$, then the alt $\nu$-Tamari lattice coincides with the $\nu$-Dyck lattice. This is due to an easy fact that the $\delta$-rotation of a path is equivalent to turning a valley on that path into a peak.
    
    We close this section by stating a useful property of the alt $\nu$-Tamari lattices. Define $\widehat{\nu}_0 = \sum_{i=0}^{n}\nu_i - \sum_{i=1}^{n}\delta_i$ and $\widehat{\nu}=(\widehat{\nu}_0,\delta_1,\dots,\delta_n)$. Notice that $\widehat{\nu}$ is a path below $\nu$ with the same endpoints as $\nu$. We write $1^{\nu} = N^n E^m$ for the top path above $\nu$ and $\widehat{\nu}$, where $m=\sum_{i=0}^n \nu_i=\sum_{i=0}^n \widehat{\nu}_i$. Recall that for two elements $a,c$ of a poset $P$, the \textit{interval} $[a,c]$ consists of all elements $b \in P$ such that $a \leq b \leq c$.

    \begin{proposition}[{\cite[Proposition 4.4]{CC24}}]\label{prop.int}
        Given a lattice path $\nu$ and an increment vector $\delta$ of $\nu$. Let $\widehat{\nu}$ be defined as above. Then the alt $\nu$-Tamari lattice $\Tam_{\delta}(\nu)$ is the interval $[\nu, 1^{\nu}]$ in the $\widehat{\nu}$-Tamari lattice $\Tam(\widehat{\nu})$. 
    \end{proposition}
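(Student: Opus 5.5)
The plan is to compare the two notions of altitude directly. Let $\widehat{\nu}=(\widehat{\nu}_0,\delta_1,\dots,\delta_n)$. Two facts should do most of the work.

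\textbf{Step 1: the ordinary $\widehat{\nu}$-altitude is a shift of the $\delta$-altitude.} For any path $\mu$ and lattice point $x$ on $\mu$, I expect
\[
\alt(\widehat{\nu},x)=\alt_{\delta}(\nu,x)+\widehat{\nu}_0 .
\]
The proof is by induction along $\mu$. At the origin, $\alt(\widehat{\nu},0)=\widehat{\nu}_0$. An $E$ step decreases $\alt(\widehat{\nu},\cdot)$ by $1$. The $i$th $N$ step moves to height $i$, where the boundary $\widehat{\nu}$ extends $\delta_i$ further east, so it adds exactly $\delta_i$. These are the same increments as in the recursion defining $\alt_{\delta}(\nu,\cdot)$. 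This is the same computation as \cite[Remark 4.3]{CC24}, with $\nu$ replaced by $\widehat{\nu}$. Since the two altitudes differ by a constant, the points $y$ selected in a $\delta$-rotation and in a $\widehat{\nu}$-rotation at a valley $x$ coincide. Hence, on any path lying weakly above $\nu$, $\delta$-rotations in $P(\nu)$ and $\widehat{\nu}$-rotations agree.

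\textbf{Step 2: $P(\nu)$ is exactly the interval $[\nu,1^{\nu}]$ of $\Tam(\widehat{\nu})$.} Note first that $\widehat{\nu}$ lies below $\nu$, because $\delta_i\le\nu_i$ makes the partial sums of $\widehat{\nu}$ from the top dominate. So $P(\nu)\subseteq P(\widehat{\nu})$.

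Next I would show that $P(\nu)$ is closed upward under $\widehat{\nu}$-rotations. A rotation replaces $E\,\mu(x,y)$ by $\mu(x,y)\,E$, which moves the subpath one unit west. The result stays weakly above any path that $\mu$ was above, so $P(\nu)$ is an up-set in $\Tam(\widehat{\nu})$. It follows that every element of $P(\nu)$ is $\le 1^{\nu}$, which is the top element.

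It remains to show that every element of $P(\nu)$ is $\ge\nu$, and conversely that anything $\ge\nu$ lies in $P(\nu)$. The converse is immediate from upward closure. For the forward direction, I would show that every $\mu\in P(\nu)$ with $\mu\ne\nu$ admits a $\widehat{\nu}$-rotation in reverse, i.e.\ a down-cover, that stays inside $P(\nu)$. Iterating this reaches $\nu$, which is the unique path in $P(\nu)$ with no valley-reversal available.

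\textbf{Step 3: conclude.} By Step 1, cover relations in $\Tam_{\delta}(\nu)$ and in $\Tam(\widehat{\nu})$ restricted to $P(\nu)$ coincide. The cover relations of the interval $[\nu,1^{\nu}]$ are exactly the covers of $\Tam(\widehat{\nu})$ between its elements. Therefore the two posets agree.

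\textbf{Main obstacle.} The hard part is the descent claim in Step 2, i.e.\ showing $\nu\le\mu$ in $\Tam(\widehat{\nu})$ for every $\mu\in P(\nu)$. First I would look for a peak $NE$ of $\mu$ such that undoing the corresponding rotation, which shifts a subpath east, stays weakly above $\nu$. Taking the last position where $\mu$ strictly exceeds $\nu$ should provide one. If this local argument proves fiddly, the fallback is to use the $\nu$-bracket vector description from \cite{CPS20}: show that the componentwise order on bracket vectors restricts correctly, and that $\nu$ has the minimal bracket vector among elements of $P(\nu)$.
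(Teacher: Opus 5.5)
The paper gives no proof of this proposition; it is quoted from \cite{CC24}, so your argument has to stand on its own. Your Step 1 (the $\widehat{\nu}$-altitude is the $\delta$-altitude shifted by $\widehat{\nu}_0$), the up-set argument in Step 2, and Step 3 are all correct. The gap is the descent claim in Step 2. It carries the entire content of the proposition, and you leave it at ``should provide one.''

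It cannot follow from ``$\mu$ is weakly above $\nu$'' and ``$\widehat{\nu}$ lies below $\nu$'' alone. Take $\widehat{\nu}=NENENE$ and $\eta=NENNEE$. The path $NNENEE$ lies weakly above $\eta$, yet in the Tamari lattice the only cover of $NENNEE$ is $NNNEEE$, so $NNENEE\not\geq\eta$. The hypothesis $\delta_i\le\nu_i$ must therefore be used a second time, exactly in the descent step. Your write-up only uses it to place $\widehat{\nu}$ below $\nu$.

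Your suggested choice does work once it is checked. Let $m_i$ and $c_i$ be the $x$-coordinates of the $i$th $N$ step of $\mu$ and of $\nu$, and set $c_{n+1}=\sum_j\nu_j$. Choose $i$ maximal with $m_i<c_i$. Then $m_{i+1}=c_{i+1}=c_i+\nu_i$, so the horizontal run of $\mu$ at height $i$ has length at least $(c_i-m_i)+\nu_i\ge 1+\delta_i$. Let $\mu''$ be obtained from $\mu$ by turning this peak $NE$ into $EN$.
\begin{itemize}
\item The $i$th $N$ step of $\mu''$ sits at $m_i+1\le c_i$, so $\mu''\in P(\nu)$.
\item At the new valley $(m_i+1,i-1)$ the altitude rises by $\delta_i$ and first returns after exactly $\delta_i$ further $E$ steps. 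These exist because the remaining run has length at least $\delta_i$.
\item Hence the rotation subpath is $NE^{\delta_i}$, and rotating $\mu''$ there gives back $\mu$. So $\mu''\lessdot\mu$ in $\Tam(\widehat{\nu})$.
\end{itemize}
Area strictly decreases, so iterating this step reaches $\nu$. In the counterexample the corresponding run has length $1<\widehat{\nu}_2+1$, which is why it fails there. With this verification added, your outline becomes a complete proof.
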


\subsection{The $\nu$-bracket vectors}\label{sec.bracket}

     A very useful description of the elements of $\nu$-Tamari lattices, called $\nu$-bracket vectors, was presented in \cite{CPS20} (see also \cite[Section 3.2]{DL24}). By convention, the $\nu$-bracket vectors are $0$-indexed. We give an equivalent definition of these vectors as follows. 

     First, fix a lattice path $\nu$ that starts at $(0,0)$ and ends at $(r-n,n)$. We denote by $\mathbf{a}(\nu)=(a_0(\nu),\dots,a_{r}(\nu))$ the vector obtained by recording the $y$-coordinates of the lattice points along $\nu$ in the order in which they appear. For $0 \leq i \leq n$, let $f_i$ be the maximum index such that $a_{f_i}(\nu) = i$. The indices $f_0,f_1,\dots,f_n$ are called the \emph{fixed positions} of $\nu$. 

     Next, given a $\nu$-path $\mu$, the \emph{$\nu$-bracket vector of $\mu$} is a vector $\mathbf{b}(\mu)=(b_0(\mu),\dots,b_{r}(\mu))$ obtained from the following procedures:
    \begin{itemize}
        \item[(1)] Set $b_{f_i}(\mu) = i$ for each $i=0,1,\dots,n$.
        \item[(2)] Let $g_i$ be the number of $i$'s in the vector $\mathbf{a}(\mu)$. Starting with $i=0$, from right to left, we successively assign $i$ to the $g_i-1$ available spots on the left of the position $f_i$. Increase $i$ by $1$ and repeat (2) until $i=n+1$.
    \end{itemize}
    Clearly, the $\nu$-bracket vector of $\nu$ is given by $\mathbf{b}(\nu) = \mathbf{a}(\nu)$. On the set of $\nu$-bracket vectors, define the \textit{componentwise} partial order $\leq$ by the condition that $\mathbf{b}(\mu) \leq \mathbf{b}(\mu^{\prime})$ if and only if $b_i(\mu) \leq b_i(\mu^{\prime})$ for all $i$. In \cite{CPS20}, they proved that the set of all $\nu$-bracket vectors forms a lattice and it is isomorphic to the $\nu$-Tamari lattice. 
    
    \begin{example}
        We consider the Tamari lattice $\Tam(\nu)$ shown in Figure \ref{fig.alt2}, where the lattice path $\nu=EN^2E^2N$ goes from $(0,0)$ to $(3,3)$. The $\nu$-bracket vector of $\nu$ is given by $\mathbf{b}(\nu) = (0,\underline{0},\underline{1},2,2,\underline{2},\underline{3})$, the underlined numbers indicate the fixed positions of $\nu$. Take $\mu = NENE^2N$, then $\mathbf{a}(\mu)=(0,1,1,2,2,2,3)$ and its $\nu$-bracket vector is given by $\mathbf{b}(\mu) = (1,\underline{0},\underline{1},2,2,\underline{2},\underline{3})$. Clearly, $\nu \leq 
        \mu$ in $\Tam(\nu)$ if and only if $\mathbf{b}(\nu) \leq \mathbf{b}(\mu)$. 
    \end{example}

\subsection{Rowmotion on semidistributive lattices}\label{sec.row}

    A \textit{lattice} $L$ is a poset such that any two elements $x,y \in L$ have a unique greatest lower bound (i.e., the \textit{meet} of $x$ and $y$, denoted $x \wedge y$) and a unique least upper bound (i.e., the \textit{join} of $x$ and $y$, denoted $x \vee y$). Given a subset $X$ of $L$, we write $\bigwedge X$ and $\bigvee X$ for the meet and join of all elements of $X$, respectively. By convention, $\bigwedge \emptyset = \hat{1}$ and $\bigvee \emptyset = \hat{0}$. An element $x \in L$ is \textit{join-irreducible} if $x$ covers exactly one element; we denote by $x_{*}$ the unique element covered by $x$. Similarly, an element $y \in L$ is \textit{meet-irreducible} if $y$ is covered by exactly one element; we denote by $y^{*}$ the unique element that covers $y$. 

    A lattice $L$ is \textit{semidistributive} if for all $x,y,z \in L$,
    \begin{equation*}
        x \wedge y = x \wedge z \Rightarrow x \wedge(y \vee z) = x \wedge y \quad\text{and} \quad x \vee y = x \vee z \Rightarrow x \vee (y \wedge z) = x \vee y.
    \end{equation*}
    Equivalently, $L$ is semidistributive if and only if for all $x,y \in L$ with $x \leq y$, the set $\{z \in L | z \wedge y = x\}$ has a unique maximal element and the set $\{z \in L | z \vee x = y\}$ has a unique minimal element. In particular, every distributive lattice is semidistributive. 

    The original definition of rowmotion on semidistributive lattices was given by Barnard \cite{Barnard19}. Here, we present another way to describe rowmotion via the \textit{pop-stack sorting operator}, introduced by Defant and Williams \cite[Section 9]{DW23}. 

    Let $L$ be a lattice and $x \in L$. The pop-stack sorting operator $\Pop_L^{\downarrow} : L \rightarrow L$ and the dual pop-stack sorting operator $\Pop_L^{\uparrow} : L \rightarrow L$ are defined by
    \begin{equation}
        \Pop_L^{\downarrow}(x) = x \wedge \bigwedge \{ y \in L | y \lessdot x\} \quad \text{and} \quad \Pop_L^{\uparrow}(x) = x \vee \bigvee \{ y \in L | x \lessdot y\}. 
    \end{equation}
    Rowmotion on semidistributive lattices is characterized by the following theorem \cite[Theorem 9.3]{DW23}\footnote{In fact, this result holds for the broader class of semidistrim lattices, which contains semidistributive lattices as a subset (\cite[Theorem 6.2]{DW23}).}. For brevity, we omit the subscript $L$ when the context is clear.
    \begin{theorem}[\cite{DW23}]\label{thm.charrow}
        Let $L$ be a semidistributive lattice and $x \in L$. Then $\Row_L(x)$ is a maximal element of the set $\{y \in L | \Pop_L^{\downarrow}(x) = x \wedge y \}$, and $\Row_L^{-1}(x)$ is a minimal element of the set $\{y \in L | \Pop_L^{\uparrow}(x) = x \vee y \}$.
    \end{theorem}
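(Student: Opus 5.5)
We prove this by unwinding Barnard's definition of rowmotion on a semidistributive lattice. Each cover $y\lessdot x$ is labeled by a join-irreducible $j$ with $j\le x$ and $j\not\le y$; one takes the unique minimal such $j$, which exists by semidistributivity. Let $D(x)$ be the set of labels of the lower covers of $x$. Canonical join representations give $x=\bigvee D(x)$, and semidistributivity supplies a bijection $\kappa$ from join-irreducibles to meet-irreducibles: $\kappa(j)$ is the unique maximal element of $\{z : z\wedge j=j_*\}$. Barnard defines $\Row(x)=\bigwedge \kappa(D(x))$. The plan has three steps: (i) verify that $\Row(x)\wedge x=\Pop^{\downarrow}(x)$; (ii) show that $\Row(x)$ is the maximum of $S_x=\{y: x\wedge y=\Pop^{\downarrow}(x)\}$; (iii) deduce the statement for $\Row^{-1}$ by order duality.

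For (ii), we first need that $S_x$ has a unique maximal element. Put $p=\Pop^{\downarrow}(x)\le x$. By the equivalent form of semidistributivity recalled above, the set $\{z: z\wedge x=p\}$ has a unique maximal element; call it $m$. It then remains to identify $m$ with $\bigwedge\kappa(D(x))$.

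To show $m\le \Row(x)$, take $j\in D(x)$ and set $y_j\lessdot x$ to be the cover labeled by $j$. Since $p\le y_j$, we have $j\wedge m\le x\wedge m=p\le y_j$. Combining this with $j\le x$, $j\not\le y_j$ and $j_*\le y_j$ should give $j\wedge m=j_*$. The point is that $j\wedge m<j$, and $j\wedge y_j=j_*$ holds because $j$ is the minimal label. Hence $m\le\kappa(j)$ for every $j\in D(x)$, so $m\le\Row(x)$.

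For the reverse inclusion, it suffices to show $x\wedge\Row(x)\le p$, which is step (i). Each lower cover $y_j$ satisfies $y_j=\bigvee(D(x)\setminus\{j\})\vee(\text{elements below }j_*)$. Since $\kappa(j)\ge y_j$, we get $x\wedge\kappa(j)=y_j$: indeed $x\wedge \kappa(j)\ge y_j$, and it is not $x$ because $j\not\le\kappa(j)$. Taking the meet over $j$ then gives $x\wedge\Row(x)=\bigwedge_j y_j=p$. So $\Row(x)\in S_x$, and maximality of $m$ yields $\Row(x)\le m$. This proves equality.

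The main obstacle is the lemma $x\wedge\kappa(j)=y_j$, specifically that $\kappa(j)\ge y_j$. I would try to prove it by showing $y_j\wedge j=j_*$, which follows from the minimality of the label. Applying the defining maximality of $\kappa(j)$ then gives $y_j\le\kappa(j)$. The statement for $\Row^{-1}$ follows by applying the same argument to the dual lattice, which is again semidistributive. Under duality, $\kappa$ and $\kappa^{-1}$ are interchanged and $\Pop^{\downarrow}$ becomes $\Pop^{\uparrow}$.
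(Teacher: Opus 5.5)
The paper gives no proof of this statement; it quotes it from \cite{DW23} (Theorem 9.3 there, which is proved for the larger class of semidistrim lattices). Your direct argument from Barnard's formula $\Row(x)=\bigwedge\kappa(D(x))$ is a reasonable self-contained alternative for semidistributive lattices. Most of it is sound: the existence of the maximum $m$ of $S_x$, the identity $y_j\wedge j=j_*$, the lemma $x\wedge\kappa(j)=y_j$, and hence $x\wedge\Row(x)=\Pop^{\downarrow}(x)$ and $\Row(x)\le m$.

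The gap is in the other inequality, $m\le\Row(x)$. Your chain $j\wedge m\le x\wedge m=p\le y_j$, together with $j\wedge y_j=j_*$, yields only $j\wedge m\le j_*$. To conclude $m\le\kappa(j)$ you also need $j_*\le m$. None of the facts you list ($j\le x$, $j\not\le y_j$, $j_*\le y_j$) gives this, because $j_*\le y_j$ says nothing about the other lower covers of $x$, hence nothing about $p$. The upper bound alone cannot suffice, since $z\wedge j<j$ does not imply $z\le\kappa(j)$. For instance, in the pentagon $\hat{0}<a<b<\hat{1}$, $\hat{0}<c<\hat{1}$, take $j=b$ and $z=c$: then $c\wedge b=\hat{0}$, but $\kappa(b)=a\not\ge c$.

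The missing ingredient is $j_*\le p$, and it comes from join-semidistributivity. Let $y'\neq y_j$ be another lower cover of $x$. Then $y_j\vee j=x=y_j\vee y'$, so $y_j\vee(j\wedge y')=x$. If $j\not\le y'$, then $j\wedge y'\le j_*\le y_j$ and the left side collapses to $y_j$, a contradiction. Thus $j_*<j\le y'$ for every other lower cover. Together with $j_*\le y_j$ this gives $j_*\le p\le m$, so $j\wedge m=j_*$ and $m\le\kappa(j)$.

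This is exactly the inequality $y_j\ge\bigvee(D(x)\setminus\{j\})$ lurking in your later description of $y_j$; you should prove it and use it at this point. The equality you wrote there is false in general. In the pentagon with $x=\hat{1}$, the lower cover $b$ has label $c$, and $\bigvee(D(x)\setminus\{c\})\vee c_*=a\neq b$. You never actually rely on that equality, though.

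Finally, in step (iii), the fact that rowmotion on the dual lattice equals $\Row_L^{-1}$ rests on Barnard's theorem that $\kappa$ carries the canonical join representation of $x$ to the canonical meet representation of $\Row(x)$. You should cite that explicitly.
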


    \begin{example}
        Let $L$ be a semidistributive lattice with $13$ elements shown in Figure \ref{fig.rowexample}. We demonstrate how rowmotion acts on $L$. For the maximal element $m$, $\Pop^{\downarrow}(m)=m \wedge \bigwedge\{i,\ell\} = g$ and the set $\{y \in L | \Pop^{\downarrow}(m) = m \wedge y\}$ contains only one element $g$, so $\Row(m)=g$. For the element $g$, one can check that $\Pop^{\downarrow}(g)=b$ and  $\Row(g)=b$. For the element $b$, we have $\Pop^{\downarrow}(b)=a$ and the set $\{y \in L | \Pop^{\downarrow}(b) = b \wedge y\}$ contains three elements $\{a,e,j\}$ where the maximal element is $j$. Thus, $\Row(b)=j$. Continuing this process, we obtain the orbit structure of rowmotion on $L$. We encourage the reader to check that, in fact, rowmotion on $L$ has only one orbit:
        \begin{equation*}
            m \rightarrow g \rightarrow b \rightarrow j \rightarrow i \rightarrow h \rightarrow c \rightarrow k \rightarrow e \rightarrow d \rightarrow \ell \rightarrow f \rightarrow a \rightarrow m.
        \end{equation*}
    \end{example}
\begin{figure}[hbt!]
    \centering
        \begin{tikzpicture}
            \draw (0,0) -- (1,0) -- (2,0) -- (3,0);
            \draw (0,1) -- (1,1) -- (2,1) -- (3,1) -- (4,1);
            \draw (0,2) -- (1,2) -- (2,2) -- (4,2);
            \draw (0,0) -- (0,1) -- (0,2);
            \draw (1,0) -- (1,1) -- (1,2);
            \draw (2,0) -- (2,1) -- (2,2);
            \draw (4,1) -- (4,2);
            \draw (3,0) -- (3,1);
            \filldraw[black] (0,0) circle (2pt) node [anchor=north east] {$a$};
            \filldraw[black] (1,0) circle (2pt) node [anchor=north east] {$b$};
            \filldraw[black] (2,0) circle (2pt) node [anchor=north east] {$c$};
            \filldraw[black] (3,0) circle (2pt) node [anchor=north east] {$d$};
            \filldraw[black] (0,1) circle (2pt) node [anchor=north east] {$e$};
            \filldraw[black] (1,1) circle (2pt) node [anchor=north east] {$f$};
            \filldraw[black] (2,1) circle (2pt) node [anchor=north east] {$g$};
            \filldraw[black] (3,1) circle (2pt) node [anchor=north east] {$h$};
            \filldraw[black] (4,1) circle (2pt) node [anchor=north east] {$i$};
            \filldraw[black] (0,2) circle (2pt) node [anchor=north east] {$j$};
            \filldraw[black] (1,2) circle (2pt) node [anchor=north east] {$k$};
            \filldraw[black] (2,2) circle (2pt) node [anchor=north east] {$\ell$};
            \filldraw[black] (4,2) circle (2pt) node [anchor=north east] {$m$};
        \end{tikzpicture}
    \caption{A semidistributive lattice $L$ with $13$ elements.}
    \label{fig.rowexample}
\end{figure}

\subsection{The cyclic sieving phenomenon}\label{sec.csp}

    In their seminal paper, Reiner, Stanton, and White \cite{RSW04} introduced the cyclic sieving phenomenon (CSP), which provides a unified way to describe the orbit structure of a set of combinatorial objects under a cyclic action. Formally, let $X$ be a finite set, and $g:X \rightarrow X$ be an invertible map of order $n$, that is, $n$ is the smallest integer such that for all $x \in X$, $g^n(x)=x$. Let $f(q) \in \mathbb{C}[q]$ be a polynomial. We say the triple $(X,g,f(q))$ \textit{exhibits the cyclic sieving phenomenon} if for all integers $d$, 
    \begin{equation}
        |\{ x \in X | g^d(x) = x\}| = f(\omega^d),
    \end{equation}
    where $\omega = e^{2\pi i/n}$. In particular, this forces $f(1)$ to be the number of elements of $X$. 

    The cyclic sieving phenomenon has been observed in many settings. Defant and Lin \cite[Theorem 5.13]{DL24} showed that the order of rowmotion $\Row$ on the $n$th $m$-Tamari lattice (i.e., $ \Tam_n(m) = \Tam(\nu)$ with $\nu = (NE^m)^n$) is $(m+1)n$. Moreover, the triple $(\Tam_n(m), \Row, f(q))$ exhibits the cyclic sieving phenomenon where the polynomial $f(q)$ is the $q$-analogue of the Fuss--Catalan number (see \cite[Section 5]{DL24} for details).

\subsection{Homomesy and homometry}\label{sec.homomesy}

One of the central focuses of dynamical algebraic combinatorics is the property of various statistics on the set of elements in each orbit of an action. If $X$ is a finite set, then a \emph{statistic} on $X$ is a map $\mathsf{st}:X \rightarrow \mathbb{Z}_{\geq 0}$. If $g$ is an invertible map on $X$, then the statistic $\mathsf{st}$ is \emph{homomesic} \cite{PR15} if there is a constant $c$ such that
\begin{equation*}
    \frac{1}{|\mathcal{O}|}\sum_{z\in \mathcal{O}} \mathsf{st}(z) = c,
\end{equation*}
for \emph{every} orbit $\mathcal{O}$ under the map $g$. In other words, the average of the statistic $\mathsf{st}$ is the same for every orbit. 

The statistic $\mathsf{st}$ is called \emph{homometric} \cite{EPRS23} if 
\begin{equation*}
    \sum_{z \in \mathcal{O}} \mathsf{st}(z) = \sum_{z^{\prime} \in \mathcal{O}^{\prime}} \mathsf{st}(z^{\prime}),
\end{equation*}
whenever $\mathcal{O}$ and $\mathcal{O}^{\prime}$ are orbits of the map $g$ of the \emph{same} size. Note that homomesy implies homometry, but the converse is false. 

Consider the \emph{down-degree} statistic on a poset $P$, which is the function $\mathsf{ddeg}:P \rightarrow \mathbb{Z}_{\geq 0}$ given by $\mathsf{ddeg}(p) = |\{ z \in P | z \lessdot p\}|$. Defant and Lin \cite[Theorem 5.14]{DL24} proved that the down-degree statistic on the $n$th $m$-Tamari lattice $\Tam_n(m)$ under rowmotion is homomesic with average $\frac{m(n-1)}{m+1}$. They also provided the following conjecture \cite[Conjecture 10.2]{DL24}:
\begin{conjecture}[\cite{DL24}]\label{conjhomometry}
    Let $\nu$ be a lattice path. The down-degree statistic is homometric for rowmotion on $\Tam(\nu)$.
\end{conjecture}

\section{The alt hook-Tamari lattices}\label{sec.hook}

In this section, we study the alt hook-Tamari lattices $\H_{\delta}(a,b)$, that is, the alt $\nu$-Tamari lattices $\Tam_{\delta}(\nu)$ with $\nu = EN^{a-1}E^{b-1}N$, where $a$ and $b$ are positive integers. Notice that all the $\nu$-paths are contained in the region bounded by $\nu$ and $N^{a}E^{b}$, the Young diagram of the partition $(b,1^{a-1})$, known as the hook shape. In Section \ref{sec.latticehook}, we present the lattice structure of $\H_{\delta}(a,b)$. In Section \ref{sec.orbithook}, we study the orbit structure of $\H_{\delta}(a,b)$ under rowmotion. The orbit sums of several statistics are given in Section \ref{sec.stathook}.

\subsection{The lattice structure of $\H_{\delta}(a,b)$}\label{sec.latticehook}

    Let $a$ and $b$ be positive integers and fix the lattice path $\nu = EN^{a-1}E^{b-1}N$. We recall from Section \ref{sec.tam} that the encoding of $\nu$ as a sequence of nonnegative integers and its increment vector $\delta$ are given by
    \begin{equation}
        \nu= (\nu_0,\nu_1,\dots,\nu_a) = \begin{cases}
            (1,\underbrace{0,\dots,0}_{a-2},b-1,0), & \text{if $a>1$,} \\
            (b,0), & \text{if $a=1$,} 
        \end{cases}  \text{ and }  \delta = \delta(k) = \begin{cases}
            (\underbrace{0,\dots,0}_{a-2},k,0), & \text{if $a>1$.}\\
             0, & \text{if $a=1$,} 
        \end{cases}
    \end{equation}
    where $k=0,1,\dots,b-1$. We would like to point out that there are $b$ distinct increment vectors, each defining a different partial order on the set of $\nu$-paths $P(\nu)$. This results in $b$ different lattice structures of the alt hook-Tamari lattice $\H_{\delta(k)}(a,b)$. When $a=1$, there is only one possible increment vector, namely $\delta =0$, and $\H_{\delta}(1,b)$ is exactly the $\nu$-Dyck lattice with $\nu=E^bN$. 

    By Proposition \ref{prop.int}, each alt hook-Tamari lattice $\H_{\delta(k)}(a,b)$ is isomorphic to the interval $[\nu,1^{\nu}]$ in the $\widehat{\nu}(k)$-Tamari lattice $\Tam(\widehat{\nu}(k))$, where $\widehat{\nu}(k)$ is the lattice path
    \begin{equation}\label{eq.nuhat}
        \widehat{\nu}(k) = \begin{cases}
            (b-k,\underbrace{0,\dots,0}_{a-2},k,0) = E^{b-k}N^{a-1}E^kN, & \text{if $a>1$,}\\
            (b,0) = E^bN, & \text{if $a=1$.}
        \end{cases}
    \end{equation}
    Therefore, every element of $\H_{\delta(k)}(a,b)$ can be naturally identified as an element of $\Tam(\widehat{\nu}(k))$. Following the discussion in Section \ref{sec.bracket}, we can explicitly determine the $\widehat{\nu}(k)$-bracket vector for each such element; they are characterized in Lemma \ref{lem.bracket}.
    
    \begin{example}
        We consider the alt hook-Tamari lattice $\H_{\delta(k)}(a,b)$ with $(a,b,k)=(4,7,3)$, that is, the lattice path $\nu = EN^3E^6N = (1,0,0,6,0)$. We choose the increment vector $\delta=  \delta(3)=(0,0,3,0)$, then the lattice path $\widehat{\nu}(3) = E^4N^3E^3N = (4,0,0,3,0)$. In Figure \ref{fig.althookbracket}, the lattice path $\nu$ is drawn in black while $\widehat{\nu}(3)$ is drawn in blue. Given an element $\mu=N^2ENE^2NE^4$ (shown in red in Figure \ref{fig.althookbracket}) of $\H_{\delta(3)}(4,7)$, by Proposition \ref{prop.int}, we can regard $\mu$ as an element of the Tamari lattice $\Tam(\widehat{\nu}(3))$.
        
        The $\widehat{\nu}(3)$-bracket vector of $\widehat{\nu}(3)$ itself is
        \begin{equation*}
            \mathbf{b}(\widehat{\nu}(3))=(0,0,0,0,\underline{0},\underline{1},\underline{2},3,3,3,\underline{3},\underline{4}).
        \end{equation*}
        The $y$-coordinate of the lattice points along $\mu$ is $0,1,2,2,3,3,3,4,4,4,4,4$. Following the procedure in Section \ref{sec.bracket}, the $\widehat{\nu}(3)$-bracket vector of $\mu$ is then given by
        \begin{equation*}
            \mathbf{b}(\mu)=(4,4,4,2,\underline{0},\underline{1},\underline{2},4,3,3,\underline{3},\underline{4}).
        \end{equation*}
    \end{example}
\begin{figure}[hbt!]
    \centering
    \includegraphics[width=0.3\linewidth]{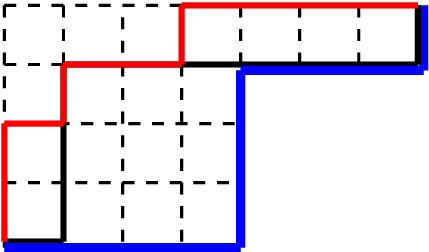}
    \caption{An element $\mu$ (drawn in red) of the alt hook-Tamari lattice $\H_{\delta(k)}(a,b)$ with $(a,b,k)=(4,7,3)$. The element $\mu$ can be identified as an element of the Tamari lattice $\Tam(\widehat{\nu}(3))$, where the lattice path $\widehat{\nu}(3)$ is drawn in blue.}
    \label{fig.althookbracket}
\end{figure}
    \begin{lemma}\label{lem.bracket}
        Let $a,b$ be positive integers and $k$ be an integer with $0 \leq k \leq b-1$. Fix the lattice path $\nu=EN^{a-1}E^{b-1}N$.  Let $\mu$ be an element of $\H_{\delta(k)}(a,b)$. Then the $\widehat{\nu}(k)$-bracket vector of $\mu$ has the following form
        \begin{equation}
            \mathbf{b}(\mu) = \begin{cases} (\alpha,s,\underline{0},\underline{1},\dots,\underline{a-2},\beta,\underline{a-1},\underline{a}), & \text{if $a>1$,}\\
            (\gamma,s,\underline{0},\underline{1}), & \text{if $a=1$},
            \end{cases},
        \end{equation}
        where the non-fixed positions are denoted by $\alpha,\beta,\gamma$, and $s$, and they have the following properties:
        \begin{itemize}
            \item[(1)] if $a>1$, then
            \item $0 \leq s \leq a$, and $\alpha$ contains $b-k-1$ terms while $\beta$ contains $k$ terms, 
            \item the concatenation of $\alpha$ and $\beta$ has the form $(\alpha,\beta)=(\underbrace{a,\dots,a}_{t},\underbrace{a-1,\dots,a-1}_{b-1-t})$,
            where $s$ and $t$ are restricted by
            $\begin{cases}
                0 \leq t \leq b-1-k, &  \text{ if $s=a-1$}, \\
                b-1-k \leq t \leq b-1 , &  \text{ if $s=a$}, \\
                0 \leq t \leq b-1, & \text{ if $0 \leq s \leq a-2$}.
            \end{cases}$
            \item[(2)] if $a=1$, then
            \item $0 \leq s \leq 1$ and $\gamma$ contains $b-1$ terms,
            \item $\gamma$ has the form $\gamma=(\underbrace{1,\dots,1}_{t},\underbrace{0,\dots,0}_{b-1-t})$, where $s$ and $t$ are restricted by $\begin{cases}
                t=b-1 , &  \text{ if $s=1$}, \\
                0 \leq t \leq b-1, & \text{ if $s=0$}.
            \end{cases}$
        \end{itemize}
    \end{lemma}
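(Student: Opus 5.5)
The plan is to compute $\mathbf{b}(\mu)$ directly from the procedure in Section~\ref{sec.bracket}, applied to the path $\widehat{\nu}(k)$ of \eqref{eq.nuhat}. Throughout, $\mu$ is regarded as an element of $\Tam(\widehat{\nu}(k))$ via Proposition~\ref{prop.int}.

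\emph{Step 1: fixed positions of $\widehat{\nu}(k)$.} Assume $a>1$. The path is $\widehat{\nu}(k)=E^{b-k}N^{a-1}E^kN$, so
\[
\mathbf{a}(\widehat{\nu}(k))=(0^{b-k+1},1,\dots,a-2,(a-1)^{k+1},a).
\]
Hence $f_0=b-k$, the fixed positions $f_0,\dots,f_{a-2}$ are consecutive, $f_{a-1}=f_{a-2}+k+1$ and $f_a=f_{a-1}+1$. The non-fixed positions therefore form two blocks. The first block consists of the $b-k$ positions before $f_0$; its last entry is called $s$ and the remaining $b-k-1$ entries form $\alpha$. The second block consists of the $k$ positions between $f_{a-2}$ and $f_{a-1}$, which form $\beta$. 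This gives the displayed shape of $\mathbf{b}(\mu)$.

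\emph{Step 2: parametrise the elements.} Every $\nu$-path lies in the hook $(b,1^{a-1})$, so it has exactly one of two forms. Either $\mu=N^aE^b$, or
\[
\mu=N^jEN^{a-1-j}E^{u}NE^{b-1-u},\qquad 0\le j\le a-1,\quad 0\le u\le b-1.
\]
From these forms we read off the multiplicities $g_i$ of each height $i$ in $\mathbf{a}(\mu)$:
\begin{itemize}
\item in the second form with $j\le a-2$: $g_j=2$, $g_{a-1}=u+1$, $g_a=b-u$, and $g_i=1$ for every other $i$;
\item in the second form with $j=a-1$: $g_{a-1}=u+2$, $g_a=b-u$, and $g_i=1$ otherwise;
\item for $\mu=N^aE^b$: $g_a=b+1$ and $g_i=1$ otherwise.
\end{itemize}

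\emph{Step 3: run the filling procedure.} The procedure places each value $i$ into free spots to the left of $f_i$, filling from right to left.
\begin{itemize}
\item If $j\le a-2$, the single extra copy of $j$ goes to the nearest free spot left of $f_j$. Since $f_0,\dots,f_{a-2}$ are consecutive, this spot is $s$, so $s=j$. Next, the $u$ extra copies of $a-1$ go first into $\beta$ and then into the rightmost free spots of $\alpha$. Finally the value $a$ fills everything that remains. Reading $(\alpha,\beta)$ from left to right, we get a block of $a$'s followed by a block of $(a-1)$'s with $t=b-1-u$, and $t$ ranges over all of $0\le t\le b-1$.
\item If $j=a-1$, the $u+1$ extra copies of $a-1$ fill $\beta$ first, then $s$, then $\alpha$.
  \begin{itemize}
  \item When $u\ge k$, the spot $s$ receives $a-1$. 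Then $s=a-1$ and $t=b-1-u\le b-1-k$.
  \item When $u<k$, the spot $s$ is left for $a$. Then $s=a$ and $t=b-1-u\ge b-k$.
  \end{itemize}
\item The path $N^aE^b$ gives $s=a$ and $t=b-1$.
\end{itemize}
Together these reproduce exactly the ranges of $t$ stated in the lemma for each value of $s$.

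\emph{Step 4: the case $a=1$ and the main obstacle.} For $a=1$ we have $\widehat{\nu}=E^bN$, and the only paths are $E^uNE^{b-u}$ and $NE^b$. The same filling argument gives the $\gamma$ form: $s=0$ with arbitrary $t$, and $s=1$ only when $t=b-1$. The main point needing care is the right-to-left filling order across the two blocks. It is this order that produces the asymmetric restrictions, namely $s=a-1$ forcing $t\le b-1-k$ and $s=a$ forcing $t\ge b-1-k$. I expect to handle it by writing out the list of available spots explicitly at the moment the value $a-1$ is placed. As a final check I would compare counts: the number of admissible pairs $(s,t)$ should equal the number of paths, which is $a\cdot b+1$.
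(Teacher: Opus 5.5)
Your approach is the same as the paper's: compute $\mathbf{b}(\mu)$ directly from the filling procedure for $\widehat{\nu}(k)$, split according to the height $j$ of the first $E$ step of $\mu$, and locate the asymmetry in whether the spot $s$ absorbs a copy of $a-1$. Steps 1 and 2 are correct, and more explicit than the paper's sketch. So are the cases $j\le a-2$, $j=a-1$ with $u\ge k$, $\mu=N^aE^b$, and $a=1$.

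There is, however, an arithmetic error in the subcase $j=a-1$, $u<k$, and as written the proof does not yield the lemma.
\begin{itemize}
\item In this subcase all $u+1$ extra copies of $a-1$ land in $\beta$, and none goes to $s$.
\item So $(\alpha,\beta)$ contains $u+1$ entries equal to $a-1$, and $t=(b-1)-(u+1)=b-2-u$, not $b-1-u$.
\item You reused the formula from the subcase $u\ge k$. There one of the $u+1$ copies goes to $s$, so only $u$ of them lie in $(\alpha,\beta)$.
\end{itemize}
With your formula, the case $u<k$ gives $s=a$ and $t\in\{b-k,\dots,b-1\}$. The pair $(a,b-1-k)$ then never occurs, and both $N^{a-1}ENE^{b-1}$ (the case $u=0$) and $N^aE^b$ map to $(a,b-1)$. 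So for $k\ge 1$ your claim that the ranges match exactly is false. Your proposed count check would expose this: it gives only $ab$ distinct pairs for $ab+1$ paths. A small check confirms the correct value: for $(a,b,k)=(2,3,1)$ and $\mu=NENE^2$, the procedure gives $\mathbf{b}(\mu)=(2,2,\underline{0},1,\underline{1},\underline{2})$, so $s=2$ and $t=1=b-1-k$, not $t=2$.

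With the corrected formula, $u\in\{0,\dots,k-1\}$ gives $t\in\{b-1-k,\dots,b-2\}$. Adding $t=b-1$ from $N^aE^b$ gives exactly $b-1-k\le t\le b-1$, and $\mu\mapsto(s,t)$ becomes a bijection onto the lemma's $ab+1$ pairs. This is more than bookkeeping. The paper uses the lemma as an exact description of the elements in Theorem~\ref{thm.Hassehook}. Moreover, the pair $(a,b-1-k)$ is precisely the join-irreducible element $w(k)$ on which the whole argument of Section~\ref{sec.orbithook} rests.
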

    \begin{proof}
        We first note that the $\widehat{\nu}(k)$-bracket vector of $\widehat{\nu}(k)$ is given by
        \begin{equation}
            \mathbf{b}(\widehat{\nu}(k)) = \begin{cases}
            (\underbrace{0,\dots,0}_{b-k},\underline{0},\underline{1},\dots,\underline{a-2},\underbrace{a-1,\dots,a-1}_{k},\underline{a-1},\underline{a}), & \text{if $a>1$,}\\
            (\underbrace{0,\dots,0}_{b},\underline{0},\underline{1}), & \text{if $a=1$},
            \end{cases}
        \end{equation}
        and the element $\mu$ of $\H_{\delta(k)}(a,b)$ stays weakly above $\nu$. Now, we record the $y$-coordinates of the lattice points along $\mu$. Based on the procedure mentioned in Section \ref{sec.bracket}, the $\widehat{\nu}(k)$-bracket vector of $\mu$ has the same fixed positions as $\mathbf{b}(\widehat{\nu}(k))$. 

        We first assume $a>1$. The term $s$ in $\mathbf{b}(\mu)$ is determined by the $y$-coordinate of the first $E$ step in $\mu$, ranging from $0$ to $a$ (note that when the $y$-coordinate of the first $E$ step is $a-1$, $s=a-1$ if the number of $E$ steps in $\mu$ that are on $y=a-1$ is more than $k$; otherwise, $s=a$). Thus, $\alpha$ contains $b-k-1$ terms and $\beta$ contains $k$ terms. This shows the first property.

        If $0 \leq s \leq a-2$, $\mu$ must pass through the lattice point $(1,a-1)$, then $(\alpha,\beta)$ consists of $a-1$'s and $a$'s. The number of $a$'s depends on the position of the last $N$ step in $\mu$, which ranges from $0$ to $b-1$. If $s=a-1$, then according to the procedure (2) in Section \ref{sec.bracket}, $\beta$ must consist of $a-1$'s. Thus, $b-1-t \geq k$. Similarly, if $s=a$, then we must have $b-1-t \leq k$. This proves the second property.

        When $a=1$, $s$ is given by the $y$-coordinate of the first $E$ step in $\mu$. It is clear that $s=1$ if and only if $t=b-1$ (this is the case where $\mu = NE^b$). If $s=0$, then $t$ ranges from $0$ to $b-1$.
    \end{proof}

    From Lemma \ref{lem.bracket}, the $\widehat{\nu}(k)$-bracket vector $\mathbf{b}(\mu)$ only depends on two parameters, $s$ and $t$. So, we can denote $\mathbf{b}(\mu)$ by $(s,t)_{\widehat{\nu}(k)}$, called the \textit{simplified $\widehat{\nu}(k)$-bracket vector}, where $s$ is the term right before the fixed position $\underline{0}$, and $t$ is the number of $a$'s in $(\alpha,\beta)$. 

    Given two elements $\mu_1,\mu_2 \in \H_{\delta(k)}(a,b)$, let $(s_1,t_1)_{\widehat{\nu}(k)}$ and $(s_2,t_2)_{\widehat{\nu}(k)}$ be their simplified $\widehat{\nu}(k)$-bracket vectors. It was shown in \cite{CPS20} that $\mu_1 \leq \mu_2$ if and only if $\mathbf{b}(\mu_1) \leq \mathbf{b}(\mu_2)$ as $\widehat{\nu}(k)$-bracket vectors. It is easy to see that $\mathbf{b}(\mu_1) \leq \mathbf{b}(\mu_2)$ if and only if $s_1 \leq s_2$ and $t_1 \leq t_2$. We summarize the above discussion in the following lemma.
    \begin{lemma}\label{lem.partialorder}
        Given two elements $\mu_1,\mu_2 \in \H_{\delta(k)}(a,b)$, let $(s_1,t_1)_{\widehat{\nu}(k)}$ and $(s_2,t_2)_{\widehat{\nu}(k)}$ be their simplified $\widehat{\nu}(k)$-bracket vectors, respectively. Then $\mu_1 \leq \mu_2$ if and only if $s_1 \leq s_2$ and $t_1 \leq t_2$.
    \end{lemma}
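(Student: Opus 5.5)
The plan is to reduce Lemma \ref{lem.partialorder} to two facts already available: the order-isomorphism from \cite{CPS20} and the explicit shape of bracket vectors from Lemma \ref{lem.bracket}. We then compare bracket vectors coordinatewise.

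First, by Proposition \ref{prop.int}, $\H_{\delta(k)}(a,b)$ is the interval $[\nu,1^{\nu}]$ of $\Tam(\widehat{\nu}(k))$. An interval inherits the order of the ambient poset, and by \cite{CPS20} that order is the componentwise order on $\widehat{\nu}(k)$-bracket vectors. Hence $\mu_1\le\mu_2$ if and only if $\mathbf{b}(\mu_1)\le\mathbf{b}(\mu_2)$ componentwise. It therefore suffices to show that $\mathbf{b}(\mu_1)\le \mathbf{b}(\mu_2)$ if and only if $s_1\le s_2$ and $t_1\le t_2$.

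Second, we use the form given by Lemma \ref{lem.bracket}. The fixed positions agree in both vectors, so the comparison only involves the non-fixed coordinates: the entry $s$ and the block $(\alpha,\beta)$ (or $\gamma$ when $a=1$). The block $(\alpha,\beta)$ is a weakly decreasing $0/1$-shifted sequence, namely $t$ copies of $a$ followed by $b-1-t$ copies of $a-1$, and these occupy the same positions in both vectors. Two such sequences satisfy componentwise inequality exactly when $t_1\le t_2$. Indeed, if $t_1\le t_2$, every position carrying $a$ in the first sequence also carries $a$ in the second. Conversely, if $t_1>t_2$, position $t_1$ carries $a$ in the first and $a-1$ in the second. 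The same argument with values $1,0$ handles $\gamma$ when $a=1$. The remaining coordinate $s$ is compared directly. Since the coordinates involved are disjoint, componentwise order of the whole vector is equivalent to the conjunction $s_1\le s_2$ and $t_1\le t_2$.

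The only point requiring care is the placement of blocks. The block $\alpha$ sits before $s$ and $\beta$ sits between $\underline{a-2}$ and $\underline{a-1}$, so $(\alpha,\beta)$ is not contiguous in $\mathbf{b}(\mu)$. The positional correspondence still holds, because the lengths $b-k-1$ and $k$ are independent of $\mu$ and only depend on $k$. This is the step to state explicitly; no other obstacle is expected.
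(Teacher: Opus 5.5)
Your proof is correct and follows the same route as the paper. The paper simply invokes the order isomorphism between $\Tam(\widehat{\nu}(k))$ and the componentwise order on $\widehat{\nu}(k)$-bracket vectors, and then says the comparison of vectors of the form in Lemma~\ref{lem.bracket} is ``easy to see''; your positional check (fixed entries agree, the slot of $s$ is fixed, and the $b-1$ slots of $(\alpha,\beta)$ or $\gamma$ sit at indices depending only on $k$ and carry $a^{t}(a-1)^{b-1-t}$) supplies exactly the detail the paper leaves implicit.
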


    To describe the lattice structure of $\H_{\delta(k)}(a,b)$, we view the simplified $\widehat{\nu}(k)$-bracket vectors $(s,t)_{\widehat{\nu}(k)}$ as the lattice point $(s,t)$ on the plane $\{(x,y)| x,y \geq 0\}$. If $\mu_2$ covers $\mu_1$ in $\H_{\delta(k)}(a,b)$, then we draw an edge connecting their corresponding lattice points $(s_2,t_2)$ with $(s_1,t_1)$. Due to Lemma \ref{lem.partialorder}, the corresponding lattice point of $\mu_2$ lies on the northeastern side of that of $\mu_1$. Let $C_n$ denote the $n$-element chain. The Hasse diagram of $\H_{\delta(k)}(a,b)$ is presented in the following theorem.
    \begin{theorem}\label{thm.Hassehook}
        Let $a,b$ be positive integers. 
        \begin{itemize}
            \item When $k=0$, the Hasse diagram of the alt hook-Tamari lattice $\H_{\delta(k)}(a,b)$ is obtained from the Hasse diagram of the product of two chains $C_a \times C_b$ with one extra vertex $(a,b-1)$ and one extra edge $(a-1,b-1) \lessdot (a,b-1)$; see Figure \ref{fig.hooklattice1}.
            
            \item When $1 \leq k \leq b-1$, the Hasse diagram of the alt hook-Tamari lattice $\H_{\delta(k)}(a,b)$ is obtained from the product of two chains $C_a \times C_b$ with the $k$th square, counted from top to bottom, in the rightmost column deformed into a pentagon; see Figure \ref{fig.hooklattice2}.
        \end{itemize}
    \end{theorem}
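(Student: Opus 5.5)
Lemmas \ref{lem.bracket} and \ref{lem.partialorder} already describe $\H_{\delta(k)}(a,b)$ completely as a subposet of $\mathbb{Z}^2$ under the componentwise order. So my plan is to list the set of admissible pairs $(s,t)$ and then read off the cover relations of the componentwise order on that set; no rotations need to be examined. I treat $a>1$ first. By Lemma \ref{lem.bracket}, the admissible pairs are:
\begin{itemize}
\item $(s,t)$ with $0\le s\le a-2$ and $0\le t\le b-1$;
\item $(a-1,t)$ with $0\le t\le b-1-k$;
\item $(a,t)$ with $b-1-k\le t\le b-1$.
\end{itemize}

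For $k=0$, the columns $s=0,\dots,a-1$ are all full, $\{0,\dots,b-1\}$, which gives $C_a\times C_b$. Column $s=a$ contains only the single point $(a,b-1)$. For $1\le k\le b-1$, column $a-1$ is truncated to $t\le b-1-k$, and column $a$ begins at $t=b-1-k$. Together, columns $a-1$ and $a$ form a "staircase" of width two. If I shift column $a$ left by one in the region $t> b-1-k$, I recover a rectangle of $a\times b$ points, plus the single extra point at height $b-1-k$. That extra point is where the pentagon sits, so the bookkeeping needs care.

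Next I determine the covers. In a finite subset $S\subset\mathbb{Z}^2$ under the componentwise order, $p\lessdot q$ holds iff $p<q$ and no element of $S$ lies in the box $[p,q]$ other than $p$ and $q$. I check this column by column.
\begin{itemize}
\item \emph{Vertical covers.} These are $(s,t)\lessdot(s,t+1)$ whenever both points are present, since each column is an interval.
\item \emph{Horizontal covers in columns $0,\dots,a-2$.} These are $(s,t)\lessdot(s+1,t)$.
\item \emph{Between columns $a-1$ and $a$.} For $t\le b-1-k$, the element $(a-1,t)$ is below $(a,b-1-k)$. The only such pair with an empty box is $t=b-1-k$, i.e. $(a-1,b-1-k)\lessdot(a,b-1-k)$. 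For smaller $t$, the point $(a-1,t+1)$ lies in the box.
\item \emph{Diagonal covers.} For $t>b-1-k$ the points $(a-2,t)$ have no right neighbour $(a-1,t)$. The smallest element of column $a$ above $(a-2,t)$ is $(a,t)$, and the box $[(a-2,t),(a,t)]$ contains no other admissible point. So $(a-2,t)\lessdot(a,t)$ is a cover "jumping" a column.
\end{itemize}
After identifying column $a$ at heights $t>b-1-k$ with the missing column-$(a-1)$ positions, the diagram is $C_a\times C_b$ with rightmost column relabelled. The one anomaly is at heights $b-1-k$ and $b-k$. There the five elements
\[(a-2,b-1-k),\quad (a-1,b-1-k),\quad (a,b-1-k),\quad (a-2,b-k),\quad (a,b-k)\]
form a pentagon in place of a square. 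This square is the $k$th one from the top in the rightmost column, which matches the statement. For $k=0$ the same analysis gives the extra vertex $(a,b-1)$ with the single cover $(a-1,b-1)\lessdot(a,b-1)$.

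The case $a=1$ is the trivial check $\{(0,t)\}\cup\{(1,b-1)\}$, a chain with one extra vertex. This is the $k=0$ picture with $a=1$.

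I expect the main obstacle to be verifying that no spurious covers appear between non-adjacent columns near the pentagon, and matching the indexing "the $k$th square counted from top". I will handle this by carefully checking the boxes at the two boundary heights $t=b-1-k$ and $t=b-k$.
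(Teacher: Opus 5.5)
Your proposal is correct and follows the paper's proof: both read off the admissible pairs $(s,t)$ from Lemma~\ref{lem.bracket}, take the componentwise order from Lemma~\ref{lem.partialorder}, and treat $a=1$ separately as a chain. Your explicit checks go beyond the paper's ``one can see'': you verify the column-jumping covers $(a-2,t)\lessdot(a,t)$ for $t>b-1-k$, you locate the five pentagon vertices at heights $b-1-k$ and $b-k$, and you confirm that this is the $k$th square from the top.
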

\begin{figure}[htbp]
    \centering
    \subfigure[]{\label{fig.hooklattice1}\includegraphics[width=0.3\textwidth]{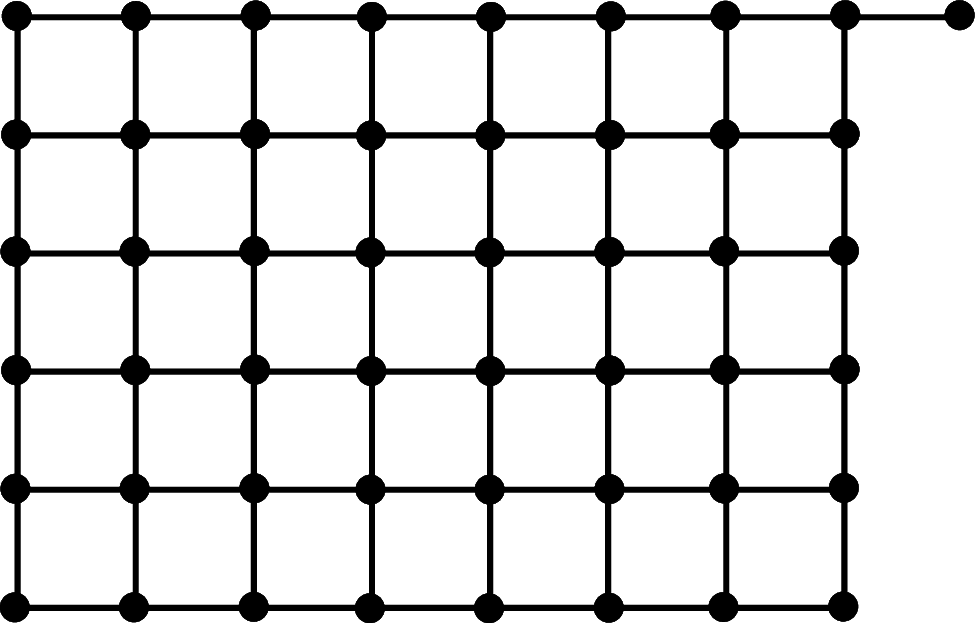}}
    \hspace{15mm}
    \subfigure[]{\label{fig.hooklattice2}\includegraphics[width=0.3\textwidth]{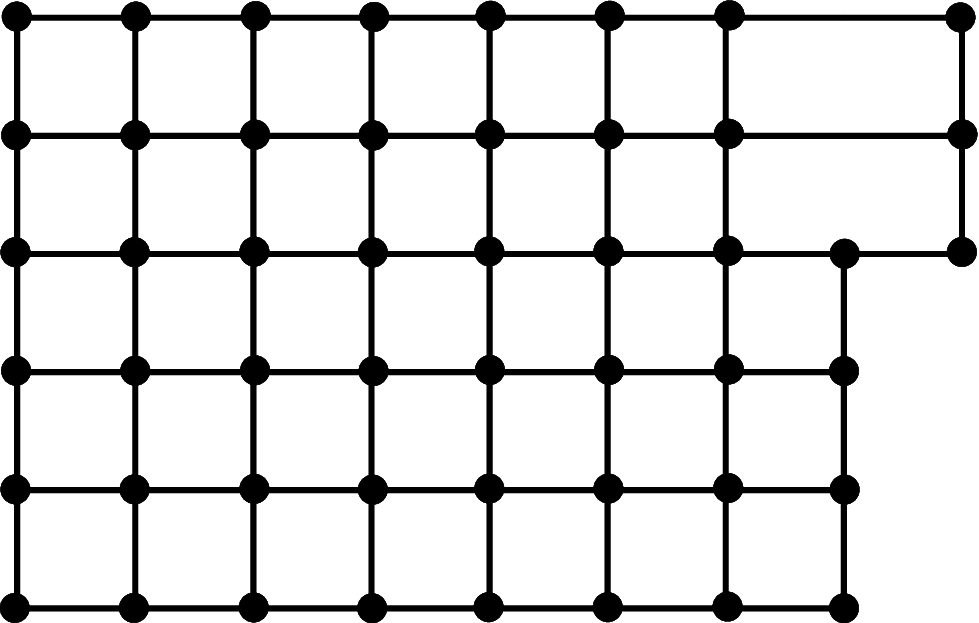}}
    \caption{The Hasse diagram of the alt hook-Tamari lattice $\H_{\delta(k)}(a,b)$ with $(a,b)=(8,6)$ and (a) $k=0$, (b) $k=2$.}
\end{figure}
    
    \begin{proof}
        We first assume $a >1$. It follows from Lemma \ref{lem.bracket} that the simplified $\widehat{\nu}(k)$-bracket vectors correspond to the lattice points $\{(s,t)| 0 \leq s \leq a-2, 0 \leq t \leq b-1\}$, $\{(a-1,t)|0 \leq t \leq b-1-k\}$, and $\{(a,t)|b-1-k \leq t \leq b-1\}$. Lemma \ref{lem.partialorder} tells us how to draw edges between these points. When $k=0$, we have lattice points $\{(s,t)| 0 \leq s \leq a-1,0 \leq t \leq b-1\}$ which form the Hasse diagram of $C_a \times C_b$, with one extra point $(a,b-1)$ connecting with $(a-1,b-1)$. 

        For $1 \leq k \leq b-1$, one can see that the lattice points $(a-1,b-1-k)$ and $(a,b-1-k)$ result in deforming one of the squares in the Hasse diagram of $C_a \times C_b$ into a pentagon; this square is the $k$th square in the rightmost column, counted from top to bottom.

        When $a=1$, it follows again from Lemma \ref{lem.bracket} that the simplified $\widehat{\nu}(k)$-bracket vectors correspond to the lattice points $\{(0,t)| 0 \leq t \leq b-1\} \cup \{(1,b-1)\}$. Lemma \ref{lem.partialorder} implies that the lattice structure of $\H_{\delta}(1,b)$ is the chain of $b+1$ elements, which agrees with our statement (the $k=0$ case).
    \end{proof}

\subsection{The orbit structure of $\H_{\delta}(a,b)$ under rowmotion}\label{sec.orbithook}

    In this section, we prove that the orbit structure of the alt hook-Tamari lattice $\H_{\delta(k)}(a,b)$ under rowmotion is independent of $\delta(k)$. We also explicitly determine their orbit structures and show that they exhibit the cyclic sieving phenomenon. 
    
    By Theorem \ref{thm.Hassehook}, we notice that in the Hasse diagram of $\H_{\delta(k)}(a,b)$, the lattice point $w(k)=(a,b-1-k)$ is join-irreducible and we write $w(k)_{*} = (a-1,b-1-k)$ for the lattice point covered by $w(k)$. One can obtain the Hasse diagram of $C_a \times C_b$ by contracting the edge connecting $w(k)$ and $w(k)_{*}$ from the Hasse diagram of $\H_{\delta(k)}(a,b)$. Since this edge contraction is crucial in our work, we describe it explicitly as follows. 
    
    For convenience, we denote the elements of $C_a \times C_b$ by $\{(x,y) \in \mathbb{Z} \times \mathbb{Z}| 0 \leq x \leq a-1, 0 \leq y \leq b-1\}$. For $0 \leq k \leq b-1$, there are two ways to identify elements under the contraction: in the first one, we delete the join-irreducible element $w(k)$, $\phi_k: \H_{\delta(k)}(a,b) \setminus \{w(k)\} \rightarrow C_a \times C_b$; in the second one, we delete the meet-irreducible element $w(k)_{*}$, $\phi_k^{*}: \H_{\delta(k)}(a,b) \setminus \{w(k)_{*}\} \rightarrow C_a \times C_b$. They are given by
        \begin{equation}\label{eq.contraction}
            \phi_{k}(s,t)=\phi_{k}^{*}(s,t) = \begin{cases}
                (s,t), & \text{ if $0 \leq s \leq a-2$,} \\
                (a-1,t), & \text{ if $s=a-1,a$.} 
            \end{cases}
        \end{equation}
    In particular, $\phi_{k}$ and $\phi^{*}_{k}$ are order-preserving.
    
    Our first main result is stated below. 
    \begin{theorem}\label{thm.rowhook}
        Let $a,b$ be positive integers and $k$ be an integer with $0 \leq k \leq b-1$. The orbit structure of the alt hook-Tamari lattice $\H_{\delta(k)}(a,b)$ under rowmotion is independent of the choices of the increment vector $\delta(k)$. Moreover, there are $g-1$ orbits $\mathcal{O}$ having size $\ell$, and $1$ orbit $\mathcal{O}^{\prime}$ with size $\ell+1$, where $g=\mathrm{gcd}(a,b)$ and $\ell = \mathrm{lcm}(a,b)$.
    \end{theorem}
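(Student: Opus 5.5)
The plan is to reduce rowmotion on $\H_{\delta(k)}(a,b)$ to rowmotion on the product of chains $C_a \times C_b$ using the contraction maps $\phi_k$ and $\phi_k^{*}$ of \eqref{eq.contraction}. Then we show that the single extra element inserts itself into exactly one orbit. Since the resulting orbit structure depends only on $(a,b)$, independence of $\delta(k)$ follows for free.

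\emph{Step 1: rowmotion on $C_a\times C_b$.} The lattice $C_a \times C_b$ is distributive, namely $\mathsf{J}(C_{a-1}\sqcup C_{b-1})$. By Theorem \ref{thm.charrow}, $\Pop^{\downarrow}(x,y)=(\max(x-1,0),\max(y-1,0))$, except that a coordinate that is already $0$ stays $0$. The maximal $z$ with $z\wedge(x,y)=\Pop^{\downarrow}(x,y)$ is obtained coordinatewise: coordinate $x$ goes to $x-1$ if $x>0$, and to $a-1$ if $x=0$; likewise for $y$ with $b-1$. Thus $\Row_{C}(x,y)=(x-1 \bmod a,\ y-1 \bmod b)$. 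This is the product of a rotation of order $a$ and one of order $b$. Hence $C_a\times C_b$ splits into $ab/\ell=g$ orbits, each of size $\ell$.

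\emph{Step 2: comparing $\Row_{\H}$ with $\Row_C$.} Using Theorem \ref{thm.Hassehook} and Lemma \ref{lem.partialorder}, I would compute $\Pop^{\downarrow}_{\H}(p)$ and then $\Row_{\H}(p)$ via Theorem \ref{thm.charrow} for each $p=(s,t)$. I would separate the points away from the rightmost two columns (where the local structure is a grid, so the computation is identical to Step 1) from the points in columns $a-1,a$ near the pentagon at height $b-1-k$.

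The goal is the following dichotomy, with $w=w(k)$:
\begin{itemize}
\item[(i)] For all $p$ outside a small exceptional set $\{u, w\}$, we have $\phi(\Row_{\H}(p))=\Row_C(\phi(p))$. Here $\phi$ means $\phi_k$ or $\phi_k^{*}$ as appropriate. The two maps are needed because $w(k)$ and $w(k)_{*}$ both collapse to the same point $(a-1,b-1-k)$, and the correct choice of which one to delete depends on whether $\Row_{\H}(p)$ lands on the join-irreducible or the meet-irreducible copy.
\item[(ii)] There is a unique $u$ with $\Row_{\H}(u)\in\{w,w_*\}$ taking the ``extra'' value. Then $\Row_{\H}(w)$ and $\Row_{\H}(w_*)$ continue the $C$-orbit of $\phi(u)$, so that the two preimages $w,w_*$ of the point $(a-1,b-1-k)$ occur consecutively in one $\Row_{\H}$-orbit, which projects onto a single $\Row_C$-orbit with one point doubled.
\end{itemize}
Concretely, I expect that the orbit passes $\cdots\to w \to w_* \to\cdots$ (or in the reverse order), since $\Pop^{\downarrow}(w)=w_*$ direction-wise. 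I would verify this by direct use of the coordinates, treating $k=0$ (where the extra vertex $(a,b-1)$ is a pendant top element) separately from $1\le k\le b-1$ (the pentagon case).

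\emph{Step 3: conclusion.} Given (i) and (ii), the map $\phi$ sends each $\Row_{\H}$-orbit onto a $\Row_C$-orbit. It is a bijection on all orbits except the one containing $w,w_*$, which is one element longer. Therefore $\H_{\delta(k)}(a,b)$ has $g-1$ orbits of size $\ell$ and one of size $\ell+1$, and the count $ab+1$ is consistent with this. This is independent of $k$, which gives the first assertion.

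The main obstacle is Step 2. One must carefully evaluate $\Pop^{\downarrow}$ and the maximal element in Theorem \ref{thm.charrow} at the boundary points $(s,t)$ with $s\in\{a-2,a-1,a\}$ and $t$ near $b-1-k$, and also at the wrap-around points where $\Row$ jumps to the top column or row. There, the missing points $(a-1,t)$ for $t>b-1-k$ and $(a,t)$ for $t<b-1-k$ change which elements are maximal. I would first tabulate $\Row_{\H}$ on the pentagon and its neighbours, and check against Figure \ref{fig.alttam}, before writing the general argument.
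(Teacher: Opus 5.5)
Your proposal is correct and follows the paper's proof: you contract the edge $w(k)_{*}\lessdot w(k)$ to get $C_a\times C_b$, where $\Row(x,y)=(x-1 \bmod a,\ y-1 \bmod b)$ gives $g$ orbits of size $\ell$, and then reinsert $w(k)$ into a single orbit using $\Row(w(k))=w(k)_{*}$. The local check you flag as the main obstacle, which the paper only asserts as clear, works out as you expect: with $\phi_k$ extended to all of $\H_{\delta(k)}(a,b)$ by the same formula, one has $\Row(0,(b-k) \bmod b)=w(k)$, $\Row(w(k))=w(k)_{*}$, and $\phi_k\circ\Row=\Row_{C_a\times C_b}\circ\phi_k$ at every element other than $w(k)$.
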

    \begin{proof}
        We first claim that, in $\H_{\delta(k)}(a,b)$, rowmotion sends the join-irreducible element $w(k)=(a,b-1-k)$ to $w(k)_{*}=(a-1,b-1-k)$. To see this, $\Pop^{\downarrow}(w(k)) = w(k)_{*}$ and by Theorem \ref{thm.charrow}, $\Row(w(k))$ is a maximal element of $\{y \in \H_{\delta(k)}(a,b) | \Pop^{\downarrow}(w(k)) = w(k) \wedge y\}$, which is $w(k)_{*}$. Note that this result is independent of the increment vector $\delta(k)$. 
  
        For each $\delta(k)$, we apply the contraction $\phi_k$ to $\H_{\delta(k)}(a,b)$ and obtain $C_a \times C_b$. Since these two Hasse diagrams only differ in a single edge connecting $w(k)$ and $w(k)_{*}$, it is clear that for $z \neq w(k)$, $\phi_k \left( \Row_{\H_{\delta(k)}(a,b)}(z) \right) = \Row_{C_a \times C_b} (\phi_k(z))$. Thus, the collection of orbits $\{\mathcal{O}_i\}_{i=1}^{m}$ on $\H_{\delta(k)}(a,b)$ under rowmotion is almost the same as the collection of orbits $\{\mathcal{O}^{\prime}_i\}_{i=1}^{m}$ of $C_a \times C_b$ under rowmotion, with the exception of a single orbit. Let $\mathcal{O}^{\prime}_m$ denote the orbit containing $w(k)_{*}$, then $\mathcal{O}_m = \mathcal{O}^{\prime}_m \cup \{w(k)\}$, while $\mathcal{O}_i = \mathcal{O}^{\prime}_i$ for $i<m$.
    
        Next, we determine the orbit structure of $C_a \times C_b$ under rowmotion. Following Theorem \ref{thm.charrow}, rowmotion on $C_a \times C_b$ is given by
        \begin{equation}\label{eq.rowhookmod}
            \Row(x,y) = (x^{\prime},y^{\prime}), \text{ where $x^{\prime} \equiv x-1 \pmod{a}$ and $y^{\prime} \equiv y-1 \pmod{b}$}.
        \end{equation}
        To see this, if $x,y >0$, then $\Pop^{\downarrow}(x,y) = (x-1,y) \wedge (x,y-1) = (x-1,y-1)$, and $\Row(x,y)$ is given by a maximal element of $\{(x^{\prime},y^{\prime}) \in C_a \times C_b| (x-1,y-1) = (x,y) \wedge (x^{\prime},y^{\prime})\}$. Thus, $\Row(x,y) = (x-1,y-1)$. If $x=0,y>0$, then $\Pop^{\downarrow}(0,y) = (0,y-1)$ and $\Row(0,y) = \max\{(x^{\prime},y^{\prime}) \in C_a \times C_b| (0,y-1) = (0,y) \wedge (x^{\prime},y^{\prime})\}$. Thus, $\Row(0,y) = (a-1,y-1)$. We omit the case $y=0,x>0$, since this can be argued similarly. Finally, it is easy to see that $\Row(0,0) = (a-1,b-1)$. Therefore, we obtain \eqref{eq.rowhookmod}.
        
        Determining the orbit structure of rowmotion on $C_a \times C_b$ is equivalent to finding the minimal positive integer $d$ such that $x-d \equiv x \pmod{a}$ and $y-d \equiv y \pmod{b}$. This implies that  $d = \mathrm{lcm}(a,b)$ is the size of each orbit, while the number of orbits is given by $\mathrm{gcd}(a,b)$. To recover the orbit structure of rowmotion on $\H_{\delta(k)}(a,b)$, we simply put the element $w(k)$ involved in the edge contraction back in the corresponding orbit. Therefore, the orbit structure of rowmotion on $\H_{\delta(k)}(a,b)$ is independent of $\delta(k)$, and we have $\mathrm{gcd}(a,b)-1$ orbits having size $\mathrm{lcm}(a,b)$ and one orbit with size $\mathrm{lcm}(a,b)+1$, as desired.
        \end{proof}
    
    As a corollary of Theorem \ref{thm.rowhook}, we find a polynomial $f_{a,b}(q)$ such that the rowmotion on alt hook-Tamari lattices exhibits the cyclic sieving phenomenon (Section \ref{sec.csp}). For convenience, we denote the order of rowmotion by $\mathsf{or}(\Row)$.
    \begin{corollary}\label{cor.csp}
        With the same notations as in Theorem \ref{thm.rowhook}, rowmotion $\mathrm{Row}: \H_{\delta(k)}(a,b) \rightarrow \H_{\delta(k)}(a,b)$ has order
        \begin{equation}\label{eq.hookorder}
            \mathsf{or}(\Row) = \begin{cases}
            \ell(\ell+1), & \text{if $g>1$,}\\
            \ell+1, & \text{if $g=1$.}
        \end{cases}
        \end{equation}
        Moreover, the triple $(\H_{\delta(k)}(a,b), \mathrm{Row}, f_{a,b}(q))$ exhibits the cyclic sieving phenomenon, where 
        \begin{equation}
            f_{a,b}(q) = 
                \sum_{j=0}^{\ell} q^{j \ell} + (g-1)\sum_{j=0}^{\ell-1} q^{j(\ell+1)}.
        \end{equation}
    \end{corollary}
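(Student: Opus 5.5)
The argument rests on Theorem \ref{thm.rowhook}: rowmotion on $\H_{\delta(k)}(a,b)$ has $g-1$ orbits of size $\ell$ and one orbit of size $\ell+1$. Everything else is a direct computation from this orbit decomposition.

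\emph{Order.} The order of a permutation is the lcm of its cycle lengths.
\begin{itemize}
\item If $g>1$, there is at least one orbit of size $\ell$ and one of size $\ell+1$. Since $\gcd(\ell,\ell+1)=1$, the order is $\ell(\ell+1)$.
\item If $g=1$, there is only the orbit of size $\ell+1$, so the order is $\ell+1$.
\end{itemize}
This gives \eqref{eq.hookorder}. As a sanity check, the element count is $ab+1 = g\ell+1 = (g-1)\ell+(\ell+1)$, consistent with $|\H_{\delta(k)}(a,b)|$ from Theorem \ref{thm.Hassehook}.

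\emph{Cyclic sieving.} Let $n=\mathsf{or}(\Row)$ and $\omega=e^{2\pi i/n}$. For each integer $d$, the number of fixed points of $\Row^d$ is
\[
(\ell+1)\,[\ell+1\mid d] \;+\; (g-1)\,\ell\,[\ell\mid d],
\]
since an element in an orbit of size $m$ is fixed by $\Row^d$ exactly when $m\mid d$.

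Now evaluate $f_{a,b}(\omega^d)$ term by term.

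The first sum is $\sum_{j=0}^{\ell} q^{j\ell}$. Put $\zeta=\omega^{d\ell}$. When $g>1$, $\zeta$ is an $(\ell+1)$-th root of unity, because $\omega^\ell$ is a primitive $(\ell+1)$-th root of unity. When $g=1$, we have $\omega^\ell=\omega^{-1}$, which is again a primitive $(\ell+1)$-th root of unity. In both cases $\zeta=1$ exactly when $(\ell+1)\mid d$. Then $\sum_{j=0}^{\ell}\zeta^j$ equals $\ell+1$ if $\zeta=1$, and $0$ otherwise, because it is a full geometric sum over $\ell+1$ terms of a nontrivial $(\ell+1)$-th root of unity.

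The second sum is $\sum_{j=0}^{\ell-1} q^{j(\ell+1)}$. When $g>1$, $\omega^{\ell+1}$ is a primitive $\ell$-th root of unity, so this sum equals $\ell\,[\ell\mid d]$. When $g=1$, the coefficient $g-1$ vanishes and the term does not matter.

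Adding the two contributions gives exactly the fixed-point count above.

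The only delicate point is the $g=1$ case, where the order is $\ell+1$ rather than $\ell(\ell+1)$. There one must check that $\omega^\ell$ is still a primitive $(\ell+1)$-th root of unity; it is, since it equals $\omega^{-1}$. One should also confirm that $f_{a,b}(1)=ab+1$, which holds by the count above.
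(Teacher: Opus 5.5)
Your proposal is correct and follows essentially the same route as the paper. You read off the order from the orbit sizes $\ell$ and $\ell+1$ in Theorem~\ref{thm.rowhook}, count fixed points of $\Row^d$ orbit by orbit as $(\ell+1)[\ell+1\mid d]+(g-1)\ell[\ell\mid d]$ (a compact form of the paper's case-by-case count), and evaluate the two geometric sums at $\omega^d$, using the same observation $\omega^{\ell}=\omega^{-1}$ when $g=1$.
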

    \begin{proof}
        The order of rowmotion \eqref{eq.hookorder} follows immediately from Theorem \ref{thm.rowhook}. On one hand, the number of elements that are fixed by applying rowmotion $d$ times is given by
        \begin{equation}\label{eq.csp1}
            |\{\mu \in \H_{\delta(k)}(a,b) | \Row^d(\mu) = \mu\}| = \begin{cases}
                g\ell+1, & \text{ if $d \equiv 0 \pmod{ \mathsf{or}(\Row)}$}, \\
                \ell+1, & \text{ if $\ell+1 \mid d$}, \\
                (g-1)\ell, & \text{ if $\ell \mid d$}, \\
                0, & \text{ otherwise}. 
            \end{cases}
        \end{equation}
        
        On the other hand, we take $\omega = \begin{cases}
            e^{2 \pi i / \ell(\ell+1)}, &\text{ if $g>1$,}\\
            e^{2 \pi i / (\ell+1)}, & \text{ if $g=1$}.
        \end{cases}$ 
        
        When $g>1$, we have
        \begin{equation}\label{eq.csp2}
            \sum_{j=0}^{\ell} (\omega^d)^{j \ell} = \sum_{j=0}^{\ell} e^{2 \pi i \cdot jd/(\ell+1)} = \begin{cases}
                0, & \text{ if $\ell+1 \nmid d$},\\
                \ell+1, & \text{ if $\ell+1 \mid d$}.
            \end{cases}
        \end{equation}
        \begin{equation}\label{eq.csp3}
            \sum_{j=0}^{\ell-1} (\omega^d)^{j (\ell+1)} = \sum_{j=0}^{\ell-1} e^{2 \pi i \cdot jd/\ell} = \begin{cases}
                0, & \text{ if $\ell \nmid d$},\\
                \ell, & \text{ if $\ell \mid d$}.
            \end{cases}
        \end{equation}

        When $g=1$, we have 
        \begin{equation}\label{eq.csp2-1}
            \sum_{j=0}^{\ell} (\omega^d)^{j \ell} = \sum_{j=0}^{\ell} e^{2 \pi i \cdot jd\ell/(\ell+1)} = \sum_{j=0}^{\ell} e^{-2 \pi i \cdot jd/(\ell+1)} = \begin{cases}
                0, & \text{ if $\ell+1 \nmid d$},\\
                \ell+1, & \text{ if $\ell+1 \mid d$}.
            \end{cases}
        \end{equation}
        
        Using \eqref{eq.csp2}, \eqref{eq.csp3} and \eqref{eq.csp2-1}, and comparing $f_{a,b}(\omega^d)$ with \eqref{eq.csp1}, we obtain 
        \begin{equation}
            f_{a,b}(\omega^d) = |\{\mu \in \H_{\delta(k)}(a,b) | \Row^d(\mu) = \mu\}|,
        \end{equation}
        for $d=0,1,\dots,\mathsf{or}(\Row)-1$. Therefore, the triple $(\H_{\delta(k)}(a,b), \mathrm{Row}, f_{a,b}(q))$ exhibits the cyclic sieving phenomenon.
    \end{proof}

\subsection{The statistics on $\H_{\delta}(a,b)$}\label{sec.stathook}

    Recall that the down-degree statistic of an element $p$ of a poset $P$ is the function $\mathsf{ddeg}:P \rightarrow \mathbb{Z}_{\geq 0}$ given by $\mathsf{ddeg}(p) = |\{ z \in P | z \lessdot p\}|$ (Section \ref{sec.homomesy}). We also consider several standard statistics on lattice paths. For a $\nu$-path $\mu$, let $\mathsf{area}(\mu)$ denote the area of the region bounded by $\mu$ and $\nu$; let $\mathsf{peak}(\mu)$ be the number of peaks of $\mu$ (i.e., occurrences of the pattern $NE$ in $\mu$); let $\mathsf{val}(\mu)$ be the number of valleys of $\mu$ (i.e., occurrences of the pattern $EN$ in $\mu$). 
    
    Theorem \ref{thm.rowhook} shows that the orbit structure of rowmotion on $\H_{\delta(k)}(a,b)$ does not depend on the choice of the increment vector $\delta(k)$. While some elements contained in each orbit may vary with $\delta(k)$, we prove in the following theorem that the orbit sums of the statistics $\mathsf{ddeg}, \mathsf{peak}$, and $\mathsf{val}$ remain independent of $\delta(k)$. In fact, these three statistics are homometric under rowmotion. For intermediate cases in general, the statistic $\mathsf{area}$ is not homometric; see Figure \ref{fig.alttam} (when $(a,b,k) = (3,3,1)$) for example.
    \begin{theorem}\label{thm.stathook}
        With the same notations as in Theorem \ref{thm.rowhook}, rowmotion on $\H_{\delta(k)}(a,b)$ has the following properties:
        \begin{itemize}
            \item[(1)] The down-degree statistic is independent of the increment vector $\delta(k)$,
                \begin{equation}\label{eq.hookddegformula}
                    \sum_{z \in \mathcal{O}} \mathsf{ddeg}(z) = \frac{2ab-a-b}{g}, \text{ and } \sum_{z \in \mathcal{O}^{\prime}} \mathsf{ddeg}(z) = \frac{2ab-a-b}{g}+1.
                \end{equation}
                Moreover, $\mathsf{ddeg}$ is homometric under rowmotion.
            \item[(2)] The peak statistic is independent of the increment vector $\delta(k)$,
                \begin{equation}
                    \sum_{z \in \mathcal{O}} \mathsf{peak}(z) = \frac{3ab-2a-2b}{g}, \text{ and } \sum_{z \in \mathcal{O}^{\prime}} \mathsf{peak}(z) = \frac{3ab-2a-2b}{g}+2.
                \end{equation}
                Moreover, $\mathsf{peak}$ is homometric under rowmotion.
            \item[(3)] The valley statistic is independent of the increment vector $\delta(k)$,
                \begin{equation}
                    \sum_{z \in \mathcal{O}} \mathsf{val}(z) = \frac{2ab-a-b}{g}, \text{ and } \sum_{z \in \mathcal{O}^{\prime}} \mathsf{val}(z) = \frac{2ab-a-b}{g}+1.
                \end{equation}
            Moreover, $\mathsf{val}$ is homometric under rowmotion.
            \item[(4)] The area statistic is homometric in two cases: when $k=b-1$ (i.e., the hook-Tamari lattice), 
                \begin{equation}\label{eq.areab-1main}
                    \sum_{z \in \mathcal{O}} \mathsf{area}(z) = \ell \cdot \left(\frac{a+b-2}{2} + \frac{1}{a} \right), \text{ and } \sum_{z \in \mathcal{O}^{\prime}} \mathsf{area}(z) = \ell \cdot \left(\frac{a+b-2}{2} + \frac{1}{a} \right) + (a-1),
                \end{equation}
            and when $k=0$ (i.e., the $\nu$-Dyck lattice with $\nu=EN^{a-1}E^{b-1}N$), 
                \begin{equation}\label{eq.area0main}
                    \sum_{z \in \mathcal{O}} \mathsf{area}(z) = \ell \cdot \left(\frac{a+b-2}{2} \right), \text{ and } \sum_{z \in \mathcal{O}^{\prime}} \mathsf{area}(z) = \ell \cdot \left(\frac{a+b-2}{2} \right) + (a+b-1).
                \end{equation}
        \end{itemize}
    \end{theorem}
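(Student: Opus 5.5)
The plan is to reduce all four statistics to computations on the product of chains $C_a\times C_b$, using the contraction $\phi_k$ from \eqref{eq.contraction} and the proof of Theorem~\ref{thm.rowhook}. Three facts from that proof do the work. For $z\neq w(k)$, rowmotion commutes with $\phi_k$. On $C_a\times C_b$, rowmotion is $\Row(x,y)=(x-1,y-1)$ with indices mod $a$ and mod $b$. The orbits of $C_a\times C_b$ are the classes of $x-y \bmod g$, each of size $\ell$.

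An orbit of $C_a\times C_b$ is $\{(x_0-j \bmod a,\ y_0-j \bmod b): 0\le j<\ell\}$. Each value of $x$ therefore occurs exactly $\ell/a=b/g$ times, and each value of $y$ occurs exactly $\ell/b=a/g$ times. Consequently, any statistic on $C_a\times C_b$ of the form $F(x)+G(y)$ has orbit sum $\frac{b}{g}\sum_x F(x)+\frac{a}{g}\sum_y G(y)$, which is independent of the orbit. The strategy for each statistic $\mathsf{st}$ on $\H_{\delta(k)}(a,b)$ is:
\begin{itemize}
\item[(i)] write $\mathsf{st}(z)=F(x)+G(y)+E(z)$, where $(x,y)=\phi_k(z)$ and $E$ is a correction supported near the deformed square and $w(k)$;
\item[(ii)] sum $F+G$ over each orbit using the uniform distribution above;
\item[(iii)] show that the corrections contribute nothing in total on the orbits $\mathcal{O}$ and exactly the stated extra amount on $\mathcal{O}'$.
\end{itemize}

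For (1), on $C_a\times C_b$ we have $\mathsf{ddeg}(x,y)=[x>0]+[y>0]$. The orbit sum is then $\ell(1-\tfrac1a)+\ell(1-\tfrac1b)=\frac{2ab-a-b}{g}$. Next I check from Theorem~\ref{thm.Hassehook} that $\mathsf{ddeg}_{\H}(z)=\mathsf{ddeg}_{C}(\phi_k(z))$ for all $z\neq w(k)$. The only vertices whose lower covers could change are $w(k)_*$ and the element $(a,b-k)$ directly above $w(k)$. The latter covers $w(k)$ and $(a-2,b-k)$, exactly as $(a-1,b-k)$ does in $C_a\times C_b$. Since $\mathsf{ddeg}(w(k))=1$, the orbit $\mathcal{O}'=\mathcal{O}'_m\cup\{w(k)\}$ gains exactly $1$.

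For (3), the cover relations are $\delta$-rotations at valleys, so $\mathsf{val}(\mu)$ equals the up-degree of $\mu$. The dual argument applies: the up-degree on $C_a\times C_b$ is $[x<a-1]+[y<b-1]$, and it agrees with the up-degree in $\H$ off $w(k)$. The element $w(k)$ has up-degree $1$, which gives the $+1$.

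For (2) and (4), I first make the bijection $(s,t)\leftrightarrow\mu$ explicit by inverting Lemma~\ref{lem.bracket}. Here $s$ records the height of the first $E$ step, and $t$ records the number of $E$ steps after the last $N$. This yields $\mu=N^{s}EN^{a-1-s}E^{b-1-t}NE^{t}$, up to the boundary adjustment at $s\in\{a-1,a\}$ that depends on $k$. From this form, $\mathsf{peak}$ is a sum of indicators such as $[s>0]+[s<a-1]+[t>0]+\cdots$, and the orbit sums follow from (ii). For area, the formula is $\mathsf{area}=(a-1-s')+ (b-1-t)$-type linear terms in the regular region. The special column $s\in\{a-1,a\}$ adds a term whose orbit average produces the $\ell/a$ term in \eqref{eq.areab-1main}. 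This term is absent in the case $k=0$ of \eqref{eq.area0main}.

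The main obstacle is (iii) for peak and area. The values of these statistics at the elements of the column $x=a-1$ in $C_a\times C_b$ (that is, $s=a-1$ or $s=a$ in $\H$) genuinely depend on whether $t$ lies above or below $b-1-k$. So the correction $E$ is not supported at a single point. I would handle this by tracking, for each orbit, the $b/g$ elements with $x=a-1$. Using $\Row(x,y)=(x-1,y-1)$, their $y$-values are $y_0-(x_0-a+1)-ja \bmod b$. When summed over all orbits, these $y$-values cover every residue of $b$ exactly once, but within a single orbit they form a coset of $g\mathbb{Z}/b$. For peaks I expect the $k$-dependent pieces to cancel in pairs, namely the $s=a-1$ versus $s=a$ forms of the same $t$. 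This would give independence of $k$. For area I would only verify the extreme cases $k=b-1$ and $k=0$, where the correction is an affine function of $t$ summed over a coset. That sum is orbit-independent, since a coset sum of $t$ depends only on its size and $g$, up to the shift, which I would check cancels.
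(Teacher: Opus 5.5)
Your framework is the paper's: contract along $\phi_k$, use $\Row(x,y)=(x-1,y-1)$ on $C_a\times C_b$ and the fact that along an orbit each $x$ occurs $b/g$ times and each $y$ occurs $a/g$ times, then account for the reinserted element. Part (1) is exactly the paper's argument.

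For (3), identifying $\mathsf{val}$ with the up-degree is cleaner than the paper's explicit valley formula, whose verification the paper omits. Each valley yields one $\delta$-rotation, hence one upper cover, and distinct valleys yield distinct covers. This makes (3) literally the dual of (1). However, your check fails for $k=0$. There $w(0)=(a,b-1)$ is the top element, with up-degree $0$, not $1$. Also $w(0)_*$ has up-degree $1$ in $\H_{\delta(0)}(a,b)$, while $\phi_0(w(0)_*)=(a-1,b-1)$ is the top of $C_a\times C_b$. The net $+1$ survives, but the correct dual uses $\phi_k^*$. Under $\phi_k^*$, up-degrees agree everywhere except at the meet-irreducible $w(k)_*$, which has up-degree $1$. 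This is what the paper does for $k=0$.

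\textbf{The gap is in (2) and (4).} Everything hinges on the values of $\mathsf{peak}$ and $\mathsf{area}$ on the column $x=a-1$. You defer these (``up to the boundary adjustment'') and predict a mechanism that is not what happens. Inverting Lemma~\ref{lem.bracket} on that column gives two families: $(a-1,t)\leftrightarrow N^{a-1}E^{b-t}NE^{t}$ for $0\le t\le b-1-k$, and $(a,t)\leftrightarrow N^{a-1}E^{b-1-t}NE^{t+1}$ for $b-1-k\le t\le b-1$.

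\emph{Peaks.} For $a\ge 2$ and $k\ge 1$, both families give the value $2-\varepsilon(y=0)-\varepsilon(y=b-1)$ at $\phi_k(z)=(a-1,y)$. So the correction is supported only at $w(k)$. For $k=0$ it sits at $w(0)$ and $w(0)_*$, both in $\mathcal{O}'$. Nothing has to cancel in pairs.

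\emph{Area.} We have $\mathsf{area}(s,t)=s+t$ on every element. It is not a $(b-1-t)$-type expression: area increases with $t$, and this orientation matters for the correction terms. Under $\phi_k$ the correction is therefore $\varepsilon(x=a-1,\ y\ge b-k)$. It vanishes for $k=0$. For $k=b-1$ it becomes the constant $\varepsilon(x=a-1)$ under $\phi^*_{b-1}$, contributing $b/g=\ell/a$ to every orbit.

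Your stated reason for homometry of area would not work. You argue that summing an affine function of $t$ over the column-$(a-1)$ elements of an orbit is orbit-independent once a shift cancels. But those $y$-values form the residue class $y\equiv c \pmod g$, with $c$ depending on the orbit. Their sum is $\frac{b}{g}c+g\binom{b/g}{2}$, which depends on $c$. Orbit-independence comes only from the correction being independent of $t$. That is exactly what singles out $k\in\{0,b-1\}$: for intermediate $k$, the number of $y\ge b-k$ in the class generally depends on $c$. To complete (2) and (4) you need the explicit column computation above, not a cancellation argument.
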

    \begin{proof}
        Following the idea in the proof of Theorem \ref{thm.rowhook}, we can view the statistics of elements on $\H_{\delta(k)}(a,b)$ as the statistics of the corresponding lattice points on $C_a \times C_b$ via the map $\phi_k$ or $\phi_k^{*}$ \eqref{eq.contraction}. It suffices to compute the total value of the statistics of elements in each orbit of $C_a \times C_b$ under rowmotion, and then find the statistics of the element $w(k)$ or $w(k)_{*}$ involved in the edge contraction. Given a statement $P$, we write $\varepsilon(P)$ for the indicator function of $P$: $\varepsilon(P) = 1$ if $P$ is true, while $\varepsilon(P)=0$ if $P$ is false. 
        \begin{itemize}
            \item[(1)] We apply the map $\phi_k$ to $\H_{\delta(k)}(a,b)$. It is easy to see that the statistic $\mathsf{ddeg}$ on $C_a \times C_b$ is given by
            \begin{equation}\label{eq.rowddeg}
                \mathsf{ddeg}(x,y) = 2-\varepsilon(x=0)-\varepsilon(y=0),
            \end{equation} 
            for $0 \leq x \leq a-1$ and $0 \leq y \leq b-1$. Thus, summing up the down-degree statistic of all elements in the orbit $\mathcal{O}$ (of size $\ell$), we obtain
            \begin{align}
                \sum_{z \in \mathcal{O}} \mathsf{ddeg}(z) & = \sum_{i=1}^{\mathrm{lcm}(a,b)} \mathsf{ddeg}\left( \mathrm{Row}^{i}(x,y) \right) \nonumber \\
                 & = \sum_{i=1}^{\mathrm{lcm}(a,b)} ( 2-\varepsilon(a|x-i) - \varepsilon(b|y-i) ) \nonumber \\
                 & = 2 \cdot \mathrm{lcm}(a,b) - \frac{b}{\mathrm{gcd}(a,b)} - \frac{a}{\mathrm{gcd}(a,b)} \nonumber \\
                 & = \frac{2ab-a-b}{\mathrm{gcd}(a,b)}, \nonumber
            \end{align}
            where the second equality follows from \eqref{eq.rowhookmod} and \eqref{eq.rowddeg}. Since $w(k)$ is join-irreducible, $\mathsf{ddeg}(w(k)) =1$. Putting the element $w(k)$ in the orbit $\mathcal{O}^{\prime}$ increases the total down-degree by 1. This gives the desired down-degree formula.
            \item[(2)] With the simplified $\widehat{\nu}(k)$-bracket vector $\mathbf{b}(\mu) = (s,t)_{\widehat{\nu}(k)}$, the statistic $\mathsf{peak}$ on $\nu$-paths $\mu$ is given by
            \begin{equation}\label{eq.peakhook}
                \mathsf{peak}(s,t) = 3- \varepsilon(s=0) - \varepsilon(s=a-1) - \varepsilon(s=a) - \varepsilon(t=0) - \varepsilon(t=b-1),
            \end{equation}
            where the restriction of $s$ and $t$ are given in Lemma \ref{lem.bracket}. To see this, we observe that the maximum number of peaks of $\mu$ is $3$. If $\mathbf{b}(\mu) = (a,b-1)_{\widehat{\nu}(k)}$, then $\mu$ is the top path $1^{\nu}$, which contains exactly one peak. For the remaining cases, the number of peaks is determined by the boundary conditions of $\mathbf{b}(\mu)$:
            \begin{itemize}
                \item If the first component of $\mathbf{b}(\mu)$ is $0$ (resp., $a-1$ or $a$), then the first $E$ step of $\mu$ lies on $y=0$ (resp., $y=a-1$); this eliminates one potential peak.
                \item Similarly, if the second component of $\mathbf{b}(\mu)$ is $0$ (resp., $b-1$), then the last $N$ step of $\mu$ lies on $x=b$ (resp., $x=1$), which again reduces the number of peaks by one. 
            \end{itemize}
            This gives \eqref{eq.peakhook}.

            For $1 \leq k \leq b-1$, we apply the map $\phi_{k}$ to obtain $C_a \times C_b$; however, for the case $k=0$, we apply $\phi^{*}_{0}$ to yield the same structure. This is because $\mathsf{peak}(w(k)) = 2-\varepsilon(k=0)$ and $\mathsf{peak}(w(0)_{*}) = 2$, we can guarantee that putting the element back in the orbit $\mathcal{O}^{\prime}$ increases the total number of peaks by $2$. 
            
            After applying $\phi_k$ and $\phi^{*}_0$, we may write the statistic $\mathsf{peak}$ of the elements of $C_a \times C_b$ by
            \begin{equation}\label{eq.peak}
                \mathsf{peak}(x,y) = 3- \varepsilon(x=0) - \varepsilon(x=a-1)- \varepsilon(y=0) - \varepsilon(y=b-1),
            \end{equation}
            for $0 \leq x \leq a-1$ and $0 \leq y \leq b-1$. Thus, summing up the number of peaks of all elements in the orbit $\mathcal{O}$ (of size $\ell$), we obtain
            \begin{align*}
                \sum_{z \in \mathcal{O}}\mathsf{peak}(z) & = \sum_{i=1}^{\mathrm{lcm}(a,b)} \mathsf{peak}(\Row^{i}(x,y)) \\
                & = \sum_{i=1}^{\mathrm{lcm}(a,b)} (3- \varepsilon(a|x-i) - \varepsilon(a|x+1-i)- \varepsilon(b|y-i) - \varepsilon(b|y+1-i))\\
                & =3 \cdot \mathrm{lcm}(a,b) - \frac{2b}{\mathrm{gcd}(a,b)} - \frac{2a}{\mathrm{gcd}(a,b)} \\
                & = \frac{3ab-2a-2b}{\mathrm{gcd}(a,b)},
            \end{align*}
            where the second equality follows from \eqref{eq.rowhookmod} and \eqref{eq.peak}. Combining the above discussion, we obtain the desired formula for the statistic $\mathsf{peak}$.

            \item[(3)] Let $\mathbf{b}(\mu)=(s,t)_{\widehat{\nu}(k)}$ be the simplified $\widehat{\nu}(k)$-bracket vector of a $\nu$-path $\mu$. The statistic $\mathsf{val}$ of $\mu$ is given by
            \begin{equation}
                \mathsf{val}(s,t)=2-\varepsilon(s=a)-\varepsilon(s=a-1)-\varepsilon(t=b-1).
            \end{equation}
            We omit the details here, as the analysis follows an argument similar to the one presented for the number of peaks.

            Again, for $1 \leq k \leq b-1$, we apply the map $\phi_{k}$ to obtain $C_a \times C_b$; however, for the case $k=0$, we apply $\phi^{*}_{0}$ to yield the same structure. This is because $\mathsf{val}(w(k)) = 1-\varepsilon(k=0)$ and $\mathsf{val}(w(0)_{*}) = 1$, we can guarantee that putting the element back in the orbit $\mathcal{O}^{\prime}$ increases the total number of valleys by $1$.  
            
            After applying $\phi_k$ and $\phi^{*}_0$, we may write the statistic $\mathsf{val}$ of the elements of $C_a \times C_b$ by
            \begin{equation}\label{eq.val}
                \mathsf{val}(x,y) = 2 - \varepsilon(x=a-1)- \varepsilon(y=b-1),
            \end{equation}
            for $0 \leq x \leq a-1$ and $0 \leq y \leq b-1$. Thus, summing up the number of valleys of all elements in the orbit $\mathcal{O}$ (of size $\ell$), we obtain
            \begin{align*}
                \sum_{z \in \mathcal{O}}\mathsf{val}(z) & = \sum_{i=1}^{\mathrm{lcm}(a,b)} \mathsf{val}(\Row^{i}(x,y)) \\
                & = \sum_{i=1}^{\mathrm{lcm}(a,b)} (2 - \varepsilon(a|x+1-i)- \varepsilon(b|y+1-i))\\
                & =2 \cdot \mathrm{lcm}(a,b) - \frac{b}{\mathrm{gcd}(a,b)} - \frac{a}{\mathrm{gcd}(a,b)} \\
                & = \frac{2ab-a-b}{\mathrm{gcd}(a,b)},
            \end{align*}
            where the second equality follows from \eqref{eq.rowhookmod} and \eqref{eq.val}. Combining the above discussion, we obtain the desired formula for the statistic $\mathsf{val}$.  

            \item[(4)] Let $\mu$ be a $\nu$-path with the simplified $\widehat{\nu}(k)$-bracket vector $\mathbf{b}(\mu)=(s,t)_{\widehat{\nu}(k)}$. We claim that 
            \begin{equation}
                \mathsf{area}(s,t) = s+t.
            \end{equation}
            If $\mathbf{b}(\mu)=(a,b-1)_{\widehat{\nu}(k)}$, then $\mu$ is the top path $1^{\nu}$ and the area is $a+b-1$. For the remaining cases, the area of $\mu$ depends on the $y$-coordinate of the first $E$ step and the $x$-coordinate of the last $N$ step of $\mu$, this information is given by its simplified $\widehat{\nu}(k)$-bracket vector $(s,t)_{\widehat{\nu}(k)}$. A straightforward check shows that $\mathsf{area}(s,t) = s+t$.

            After applying $\phi_{k}$, the area statistic of the elements of $C_a \times C_b$ can be written as
            \begin{equation}\label{eq.areat1}
                \mathsf{area}(x,y) = x+y+\varepsilon(x=a-1 \wedge y \geq b-k),
            \end{equation}
            for $0 \leq x \leq a-1$ and $0 \leq y \leq b-1$. Since the area depends on $k$, the total area statistic of the elements in an orbit depends on $\delta(k)$, and it is not homometric in general. However, we show below that when $k=0$ and $k=b-1$, the area statistic is homometric. 
            
            When $k=0$, \eqref{eq.areat1} simply reduces to $\mathsf{area}(x,y) = x+y$. Also, $\mathsf{area}(w(0)) = a+b-1$. Summing up the area of all elements in the orbit $\mathcal{O}$ (of size $\ell$), we obtain
            \begin{align*}
                \sum_{z \in \mathcal{O}} \mathsf{area}(z) & = \sum_{i=1}^{\mathrm{lcm}(a,b)} \mathsf{area}(\Row^{i}(x,y)) \\
                & = \sum_{i=1}^{\mathrm{lcm}(a,b)} \left( (x-i) \pmod{a} + (y-i) \pmod{b} \right) \\
                & = (0+1+\cdots+a-1)\cdot\frac{\mathrm{lcm}(a,b)}{a} + (0+1+\cdots+b-1)\cdot\frac{\mathrm{lcm}(a,b)}{b} \\
                & = \mathrm{lcm}(a,b) \left(\frac{a+b-2}{2} \right),
            \end{align*}
            where the notation $(x-i) \pmod{a}$ indicates the remainder of dividing $x-i$ by $a$. This proves \eqref{eq.area0main}.
            
            When $k=b-1$, we instead apply the map $\phi^{*}_{b-1}$. This results in $\mathsf{area}(x,y) = x+y+\varepsilon(x=a-1)$. Also, $\mathsf{area}(w(b-1)_{*}) = a-1$. Summing up the area of all elements in the orbit $\mathcal{O}$ (of size $\ell$), we obtain 
            \begin{align*}
                \sum_{z \in \mathcal{O}} \mathsf{area}(z) & = \sum_{i=1}^{\mathrm{lcm}(a,b)} \mathsf{area}(\Row^{i}(x,y)) \\
                & = \sum_{i=1}^{\mathrm{lcm}(a,b)} \left( (x-i) \pmod{a} + (y-i) \pmod{b} + \varepsilon(a|x+1-i) \right) \\
                & = (0+1+\cdots+a-1)\cdot\frac{\mathrm{lcm}(a,b)}{a} + (0+1+\cdots+b-1)\cdot\frac{\mathrm{lcm}(a,b)}{b} + \frac{b}{\mathrm{gcd}(a,b)} \\
                & = \mathrm{lcm}(a,b) \left(\frac{a+b-2}{2} + \frac{1}{a}\right).
            \end{align*}
            This proves \eqref{eq.areab-1main}. 
        \end{itemize}
        We complete the proof of Theorem \ref{thm.stathook}.
    \end{proof}
\begin{remark}\label{rmk}
    We notice that the rowmotion orbit structures of the alt hook-Tamari lattices coincide with those of the antichains of the fence posets with two segments studied by Elizalde, Plante, Roby, and Sagan \cite{EPRS23}. Specifically, our down-degree formula \eqref{eq.hookddegformula} coincides with the orbit sum of their statistic $\chi$, which counts the total number of antichain elements in an orbit. Our area formula \eqref{eq.area0main} also coincides with the orbit sum of their statistic $\hat{\chi}$, which counts the total number of order ideal elements in an orbit. Their results were presented in \cite[Theorem 4.3]{EPRS23}. 
    
    In fact, there is a simple bijection between the elements of the hook-Dyck lattice (i.e., $\mathsf{Dyck}(\nu)$ with $\nu=EN^{a-1}E^{b-1}N$) and the antichains of the fence poset with two segments of lengths $a-1$ and $b-1$. The bijection maps the element $(x,y)$ of the hook-Dyck lattice to the antichain containing the $x$th element on the left segment and the $y$th element on the right segment (counted from the bottom) of the fence poset. It also maps the maximal element of the hook-Dyck lattice to the antichain containing the maximal element of the fence poset. It is straightforward to check that the down-degree (resp., area) statistic on the hook-Dyck lattices coincides with the statistic $\chi$ (resp., $\hat{\chi}$) on the fence poset. 
\end{remark}

\section{Alt $2$-row-Tamari lattices $\T_{\delta}(a,b)$}\label{sec.2row}

In this section, we study the alt $2$-row-Tamari lattices $\T_{\delta}(a,b)$, that is, the alt $\nu$-Tamari lattices $\Tam_{\delta}(\nu)$ with $\nu = E^{a}NE^{b}N$, where $a$ and $b$ are nonnegative integers. In this case, all the $\nu$-paths are contained in the region bounded by $\nu$ and $N^2E^{a+b}$, the Young diagram of the partition $(a+b,a)$. 

\subsection{The lattice structure of $\T_{\delta}(a,b)$}

Let $a$ and $b$ be nonnegative integers and fix the lattice path $\nu=E^aNE^bN$. The encoding of $\nu$ as a sequence of nonnegative integers and its increment vector (Section \ref{sec.tam}) are given by
\begin{equation}
    \nu=(\nu_0,\nu_1,\nu_2)=(a,b,0) \quad \text{and} \quad \delta=\delta(k)=(k,0),
\end{equation}
where $k=0,1,\dots,b$. There are $b+1$ choices of the increment vectors, each defining a partial order on the set of $\nu$-paths $P(\nu)$. This gives $b+1$ different lattice structures of the alt $2$-row-Tamari lattice $\T_{\delta(k)}(a,b)$. 

Again, by Proposition \ref{prop.int}, each alt $2$-row-Tamari lattice is isomorphic to the interval $[\nu,1^{\nu}]$ in the $\widehat{\nu}(k)$-Tamari lattice $\Tam(\widehat{\nu}(k))$, where $\widehat{\nu}(k)$ is a lattice path given by
\begin{equation}
    \widehat{\nu}(k)=(a+b-k,k,0) = E^{a+b-k}NE^{k}N.
\end{equation}
Therefore, every element of $\T_{\delta(k)}(a,b)$ can be naturally identified as an element of $\Tam(\widehat{\nu}(k))$. We characterize their $\widehat{\nu}(k)$-bracket vectors in Lemma \ref{lem.2rowbracket}.

\begin{example}
    We consider the alt $2$-row-Tamari lattice $\T_{\delta(k)}(a,b)$ with $(a,b,k)=(3,3,2)$, that is, the lattice path $\nu=E^3NE^3N = (3,3,0)$. We select the increment vector $\delta=\delta(2)=(2,0)$, then the lattice path $\widehat{\nu}(2) =  E^4NE^2N$. In Figure \ref{fig.alt2rowbracket}, the lattice paths $\nu$ and $\widehat{\nu}(2)$ are drawn in black and blue, respectively. Given an element $\mu = NE^2NE^4$ (shown in red in Figure \ref{fig.alt2rowbracket}) of $\T_{\delta(2)}(3,3)$, we can view $\mu$ as an element of the Tamari lattice $\Tam(\widehat{\nu}(2))$ (by Proposition \ref{prop.int}). 

    The $\widehat{\nu}(2)$-bracket vector of $\widehat{\nu}(2)$ itself is
    \begin{equation*}
        \mathbf{b}(\widehat{\nu}(2)) = (0,0,0,0,\underline{0},1,1,\underline{1},\underline{2}).
    \end{equation*}
    The $y$-coordinate of the lattice points along $\mu$ is $0,1,1,1,2,2,2,2,2$. Following the procedure in Section \ref{sec.bracket}, the $\widehat{\nu}(2)$-bracket vector of $\mu$ is given by
    \begin{equation*}
        \mathbf{b}(\mu)=(2,2,2,2,\underline{0},1,1,\underline{1},\underline{2}).
    \end{equation*}
\end{example}

\begin{figure}[hbt!]
    \centering
    \includegraphics[width=0.3\linewidth]{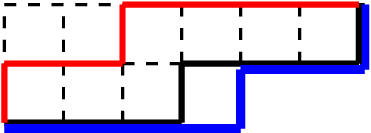}
    \caption{An element $\mu$ (drawn in red) of the alt $2$-row-Tamari lattice $\T_{\delta(k)}(a,b)$ with $(a,b,k)=(3,3,2)$. The element $\mu$ can be identified as an element of the Tamari lattice $\Tam(\widehat{\nu}(2))$, where the lattice path $\widehat{\nu}(2)$ is drawn in blue.}
    \label{fig.alt2rowbracket}
\end{figure}

\begin{lemma}\label{lem.2rowbracket}
    Let $a,b$ and $k$ be nonnegative integers with $0 \leq k \leq b$. Fix the lattice path $\nu = E^aNE^bN$. Let $\mu$ be an element of $\T_{\delta(k)}(a,b)$. Then the $\widehat{\nu}(k)$-bracket vector of $\mu$ has the following form
    \begin{equation}\label{eq.2rowbracket}
        \mathbf{b}(\mu)=(\alpha,\underbrace{0,\dots,0}_{s},\underline{0},\beta,\underline{1},\underline{2}).
    \end{equation}
    Here $\alpha$ and $\beta$ denote the blocks of  non-fixed nonzero entries, and $s$ denotes the number of non-fixed zero entries. They satisfy
    \begin{itemize}
        \item[(1)] $0\leq s \leq a$, and $\alpha$ contains $a+b-k-s$ terms while $\beta$ contains $k$ terms,
        \item[(2)] the concatenation of $\alpha$ and $\beta$ has the form
        \begin{equation*}
            (\alpha,\beta)=(\underbrace{2,\dots,2}_{t},\underbrace{1,\dots,1}_{a+b-s-t}),
        \end{equation*}
        where $0 \leq t \leq a+b-s$.
    \end{itemize}
\end{lemma}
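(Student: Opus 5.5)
I will argue exactly as in the proof of Lemma \ref{lem.bracket}: first compute the fixed positions of $\widehat{\nu}(k)=E^{a+b-k}NE^kN$, then run procedure (2) of Section \ref{sec.bracket} on an arbitrary $\nu$-path $\mu$ and record where the zeros, ones and twos land. The vector $\mathbf{a}(\widehat{\nu}(k))$ has length $r+1=a+b+3$ and consists of $a+b-k+1$ zeros, $k+1$ ones and a single $2$. Hence the fixed positions are $f_0=a+b-k$, $f_1=a+b+1$ and $f_2=a+b+2$. This gives the underlined entries $\underline{0},\underline{1},\underline{2}$ in \eqref{eq.2rowbracket}, with $a+b-k$ non-fixed slots to the left of $\underline{0}$ and $k$ non-fixed slots between $\underline{0}$ and $\underline{1}$.

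Next I parametrize $\mu$. Since $\mu$ stays weakly above $\nu=E^aNE^bN$ and ends with its last $N$ step at $x=a+b$, we have $\mu=E^{s}NE^{u}NE^{a+b-s-u}$ with $0\le s\le a$ and $u\ge0$. Then $g_0=s+1$, $g_1=u+1$ and $g_2=a+b-s-u+1$. I will apply procedure (2) level by level.
\begin{itemize}
\item For $i=0$: the $s$ extra zeros fill the $s$ slots immediately left of $f_0$. This is possible because $s\le a\le a+b-k$, and it produces the block $0,\dots,0$ of length $s$.
\item For $i=1$: there are $u$ extra ones, placed right to left in the available slots left of $f_1$. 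These are first the $k$ slots of $\beta$, and then the slots of $\alpha$ from right to left.
\item For $i=2$: the remaining $a+b-s-u$ slots, which are the leftmost ones, receive $2$'s.
\end{itemize}
Reading $(\alpha,\beta)$ left to right therefore gives $t=a+b-s-u$ twos followed by $u$ ones. This yields item (2), with $0\le t\le a+b-s$, and the length count for $\alpha$ gives item (1).

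The one point needing care is whether procedure (2) always has enough available spots to the left of each fixed position, and whether every value $0\le t\le a+b-s$ is actually attained. The count works out because the total number of non-fixed slots is $a+b=s+u+t$. The range of $t$ is realized by letting $u$ range over $0,\dots,a+b-s$; every such $u$ gives a valid $\nu$-path because the second row has length $a+b-s\ge b$. I expect this bookkeeping, in particular confirming that the zeros fit before $f_0$ using $s\le a\le a+b-k$, to be the only real obstacle. Unlike the hook case, no further restriction coupling $s$ and $t$ arises.
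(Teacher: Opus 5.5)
Your proposal is correct and takes essentially the same route as the paper. Both locate the fixed positions of $\widehat{\nu}(k)=E^{a+b-k}NE^{k}N$ and then run procedure (2) level by level; your explicit parametrization $\mu=E^{s}NE^{u}NE^{a+b-s-u}$ (giving $t=a+b-s-u$) just makes the paper's terser counting precise. One wording fix: it is $\nu$, not $\mu$, whose last $N$ step lies on $x=a+b$, since the second $N$ step of $\mu$ sits at $x=s+u\le a+b$, which is your constraint $0\le u\le a+b-s$.
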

\begin{proof}
    We first note that the $\widehat{\nu}(k)$-bracket vector of $\widehat{\nu}(k)$ is given by
    \begin{equation}
        \mathbf{b}(\widehat{\nu}(k)) = (\underbrace{0,\dots,0}_{a+b-k},\underline{0},\underbrace{1,\dots,1}_{k},\underline{1},\underline{2}),
    \end{equation}
    and the element $\mu$ of $\T_{\delta(k)}(a,b)$ stays weakly above $\nu$. Now, we record the $y$-coordinates of the lattice points along $\mu$, based on the procedure mentioned in Section \ref{sec.bracket}, the $\widehat{\nu}(k)$-bracket vector of $\mu$ has the same fixed positions as $\mathbf{b}(\widehat{\nu}(k))$.

    In \eqref{eq.2rowbracket}, the number of non-fixed $0$'s $s$ is determined by the number of first consecutive $E$ steps starting from the origin, so $0 \leq s \leq a$. Thus, $\alpha$ contains $a+b-k-s$ terms and $\beta$ contains $k$ terms. This shows the first property.

    The entries of $\alpha$ and $\beta$ are determined by the number of $E$ steps of $\mu$ lying on $y=1$ and $y=2$. According to the procedure $(2)$ in Section \ref{sec.bracket}, the concatenation of $\alpha$ and $\beta$ consists of $t$ copies of $2$'s and then followed by $a+b-s-t$ copies of $1$'s, where $0 \leq t \leq a+b-s$. This shows the second property.
\end{proof}

For simplicity, we record the $\widehat{\nu}(k)$-bracket vector $\mathbf{b}(\mu)$ from Lemma \ref{lem.2rowbracket} by a triple $(a-s,u,v)$, where $u$ is the number of $2$'s in $\alpha$ and $v$ is the number of $2$'s in $\beta$. Note that $\beta$ contains at least one $2$ (i.e., $v>0$) if and only if $\alpha$ contains only $2$'s (i.e., $u=a+b-k-s$). In other words, when $s$ is given, all the possible pairs $(u,v)$ in the triple are given by
\begin{equation}\label{eq.2rowpartialorder}
    \{(i,0)|i=0,1,\dots,a+b-k-s\} \cup \{(a+b-k-s,j)|j=1,\dots,k\}.
\end{equation}

\begin{lemma}\label{lem.partialorder2row}
    Given two elements $\mu_1,\mu_2 \in \T_{\delta(k)}(a,b)$, let $(a-s_1,u_1,v_1)$ and $(a-s_2,u_2,v_2)$ be their simplified triples, respectively. Then $\mu_1 \leq \mu_2$ if and only if $s_1 \geq s_2$, $u_1 \leq u_2$, and $v_1 \leq v_2$. 
\end{lemma}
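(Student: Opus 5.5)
The plan is to reduce the statement to the componentwise comparison of $\widehat{\nu}(k)$-bracket vectors. By Proposition \ref{prop.int}, $\T_{\delta(k)}(a,b)$ is the interval $[\nu,1^{\nu}]$ in $\Tam(\widehat{\nu}(k))$. By the result of \cite{CPS20} recalled in Section \ref{sec.bracket}, $\mu_1\leq\mu_2$ holds if and only if $\mathbf{b}(\mu_1)\leq \mathbf{b}(\mu_2)$ componentwise. So it suffices to show the following: for two vectors of the form \eqref{eq.2rowbracket} described in Lemma \ref{lem.2rowbracket}, componentwise comparison is equivalent to $s_1\geq s_2$, $u_1\leq u_2$, and $v_1\leq v_2$. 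The fixed positions agree in both vectors, so only the $a+b-k$ entries before $\underline{0}$ and the $k$ entries of $\beta$ matter.

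\emph{Block $\beta$.} By Lemma \ref{lem.2rowbracket}, $\beta$ is a weakly decreasing word consisting of $v$ twos followed by $k-v$ ones. For two such words, componentwise $\leq$ holds exactly when $v_1\leq v_2$.

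\emph{Initial block.} The first $a+b-k$ entries form $(\alpha,0^{s})$, which reads as $u$ twos, then $a+b-k-s-u$ ones, then $s$ zeros. This is again weakly decreasing, and it is determined by the number of entries $\geq 2$, namely $u$, and the number of entries $\geq 1$, namely $a+b-k-s$. Two weakly decreasing words of the same length are comparable componentwise if and only if, for each threshold $c$, the number of entries $\geq c$ in the first word is at most that in the second. So this block comparison is equivalent to $u_1\leq u_2$ together with $a+b-k-s_1\leq a+b-k-s_2$, that is, $s_1\geq s_2$.

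Combining the two blocks gives the equivalence. The only point needing care is that the prefix really is weakly decreasing, with all $2$'s of $\alpha$ preceding all its $1$'s and the zeros at the end. This follows from Lemma \ref{lem.2rowbracket}(2), which places all $t=u+v$ twos at the front of $(\alpha,\beta)$, together with the shape \eqref{eq.2rowbracket}, which puts the $s$ zeros immediately before $\underline{0}$. I expect no real obstacle beyond stating this threshold-counting observation cleanly.
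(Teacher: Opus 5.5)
Your proposal is correct and follows essentially the same route as the paper: reduce via Proposition \ref{prop.int} and the bracket-vector description of \cite{CPS20} to componentwise comparison of the vectors in \eqref{eq.2rowbracket}, then compare the prefix before $\underline{0}$ and the block $\beta$ separately. Your threshold-counting criterion for weakly decreasing words tidies up the paper's contradiction argument for $u_1\leq u_2$, and it proves the converse direction (which the paper omits as straightforward) at the same time.
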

\begin{proof}
    It suffices to show that $\mathbf{b}(\mu_1) \leq \mathbf{b}(\mu_2)$ as $\widehat{\nu}(k)$-bracket vectors if and only if $s_1 \geq s_2$, $u_1 \leq u_2$, and $v_1 \leq v_2$. By Lemma \ref{lem.2rowbracket}, we may write 
    \begin{equation*}
        \mathbf{b}(\mu_i)=(\alpha_i,\underbrace{0,\dots,0}_{s_i},\underline{0},\beta_i,\underline{1},\underline{2}), \quad \text{for $i=1,2$.}
    \end{equation*} 
    
    Note that $\mathbf{b}(\mu_1) \leq \mathbf{b}(\mu_2)$ implies that $s_1 \geq s_2$. Since $\beta_1$ and $\beta_2$ both contain $k$ entries, the number of $2$'s in $\beta_1$ is smaller or equal to the number of $2$'s in $\beta_2$ (i.e., $v_1 \leq v_2$). Next, we assume the contrary that $u_1 > u_2$ (i.e., $\alpha_1$ contains more $2$'s than $\alpha_2$). If $s_1=s_2$, then $\mathbf{b}(\mu_1) > \mathbf{b}(\mu_2)$; if $s_1 > s_2$, then $\mathbf{b}(\mu_1)$ and $\mathbf{b}(\mu_2)$ are not comparable, both cases yield a contradiction. Hence, $u_1 \leq u_2$.
    
    It is straightforward to verify the converse, which we omit here.
\end{proof}

Consider a diagram consisting of $a$ left-aligned rows of unit squares, where the $i$th row contains $a+b-i$ squares. We denote this diagram by $S(a,b)$. If $b=0$, then the last row of the diagram $S(a,0)$ reduces to a single vertical edge. If $a=0$, then the diagram $S(0,b)$ reduces to a horizontal path of length $b$. If $a=b=0$, then $S(0,0)$ reduces to a single point. In the following theorem, we describe the lattice structure of the alt $2$-row-Tamari lattice $\T_{\delta(k)}(a,b)$.
\begin{theorem}\label{thm.lattice2row}
    Let $a,b$ be nonnegative integers.
    \begin{itemize}
        \item When $k=0$, the Hasse diagram of the alt $2$-row-Tamari lattice $\T_{\delta}(a,b)$ is the diagram $S(a,b)$ with an extra point connecting the top right vertex of $S(a,b)$; see Figure \ref{fig.2rowlattice1}.
        \item When $1 \leq k \leq b$, the Hasse diagram of the alt $2$-row-Tamari lattice $\T_{\delta}(a,b)$ is obtained from $S(a,b)$ by deforming the $k$th unit square, counted from right to left, in each row into a pentagon; see Figure \ref{fig.2rowlattice2}. 
    \end{itemize}
\end{theorem}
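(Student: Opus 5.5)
The plan mirrors the proof of Theorem \ref{thm.Hassehook}: by Lemma \ref{lem.2rowbracket} and \eqref{eq.2rowpartialorder} we know the elements exactly, and by Lemma \ref{lem.partialorder2row} we know the order. So the task is to read off the cover relations and match them with $S(a,b)$.

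\textbf{Coordinates.} I would use the row index $r=a-s\in\{0,\dots,a\}$ and a horizontal coordinate $h=u+v$. For fixed $r$ (equivalently fixed $s$), \eqref{eq.2rowpartialorder} shows that the admissible pairs $(u,v)$ form a chain. This chain first increases $u$ from $0$ to $a+b-k-s$ with $v=0$, then increases $v$ from $1$ to $k$. So $h$ ranges over $0,\dots,a+b-s$, which is $a+b-a+r=b+r$, and row $r$ is a path of length $b+r$.

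By Lemma \ref{lem.partialorder2row}, $(r_1,u_1,v_1)\le(r_2,u_2,v_2)$ if and only if $r_1\le r_2$, $u_1\le u_2$ and $v_1\le v_2$. The rows are nested: row $r$ is a prefix of row $r+1$ in the variable $h$, except near the $v$-segment. That segment starts at $u=a+b-k-s$, which shifts with $r$.

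\textbf{Cover relations.} I would list the covers between consecutive rows $r$ and $r+1$.
\begin{itemize}
\item For $v=0$ and $u\le a+b-k-s-1$, the element $(r,u,0)$ lies below $(r+1,u,0)$. This is a vertical cover, so these squares look like unit squares of a grid.
\item Points $(r,u,v)$ with $v\ge1$ compare with $(r+1,u',v')$ only when $u'\ge u=a+b-k-s$. The available $u'$ values in row $r+1$ are at most $a+b-k-s+1$. So $(r,a+b-k-s,v)$ lies below $(r+1,a+b-k-s+1,v)$, which has $h$ one larger.
\item The point $(r,a+b-k-s,0)$ is covered by $(r+1,a+b-k-s,0)$.
\end{itemize}
The diagonal shift for $v\ge1$ is the crux. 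I expect it to produce ordinary squares for the $v\ge1$ columns, though drawn sheared, and a pentagon at the junction: the square formed by $(r,h_0-1)$, $(r,h_0)$ and their counterparts in row $r+1$, where $h_0=a+b-k-s$ is the corner. I would verify this square by square.

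\textbf{Where the pentagon sits.} After relabelling the $v\ge1$ columns by shifting them one step in the upper row, I would count which square of each row becomes a pentagon. The aim is to show it is the $k$th square from the right in each of the rows $1,\dots,a$, and that the vertical segment closing row $r$ on the right is the extra top-right point. The boundary cases need separate checks:
\begin{itemize}
\item For $k=0$ there are no $v$-columns. Then the Hasse diagram is exactly the staircase of rows of lengths $b+r$, that is $S(a,b)$, except for the top element $1^{\nu}$, which hangs off the top-right corner.
\item The degenerate shapes $a=0$ or $b=0$ also need checking.
\end{itemize}

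\textbf{Main obstacle.} The hardest step is verifying the covers near the junction $u=a+b-k-s$. In particular I must confirm that no other comparabilities collapse a pentagon into a square or create extra edges. The pentagon appears because one element of row $r+1$ has no counterpart directly below it in row $r$.

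I would settle this by checking the two incomparable chains of length $2$ and $3$ between the junction elements directly via Lemma \ref{lem.partialorder2row}. I would then compare with small examples such as $(a,b,k)=(3,3,2)$.
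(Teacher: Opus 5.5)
Your route is the paper's: fix $s$ to get a chain on one horizontal line, compare consecutive rows via Lemma~\ref{lem.partialorder2row}, place the pentagon where $v$ switches from $0$ to $1$, and check that everything else is rectangular. Your three cover bullets are correct and more explicit than the paper's figure-based sketch (Figure~\ref{fig.2rowexample}, ``not hard to see'').

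\textbf{The pentagon is misplaced by one face.} With $h_0=a+b-k-s=b-k+r$, your own first and third bullets give vertical covers $(r,h_0-1)\lessdot(r+1,h_0-1)$ and $(r,h_0)\lessdot(r+1,h_0)$. So the face over the edge $(r,h_0-1)$--$(r,h_0)$ is an honest square. The pentagon is the next face to the right, with vertices $(r,h_0),(r,h_0+1),(r+1,h_0),(r+1,h_0+1),(r+1,h_0+2)$. Its two sides are:
\begin{itemize}
\item a chain of length $2$ through $(r,h_0+1)$, whose upward cover into row $r+1$ is the diagonal one to $(r+1,h_0+2)$;
\item a chain of length $3$ through $(r+1,h_0)$ and $(r+1,h_0+1)$.
\end{itemize}
The point $(r+1,h_0+1)$, i.e.\ $(u,v)=(h_0+1,0)$, covers only $(r+1,h_0)$, because $(r,h_0+1)$ has $v=1$. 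Counting faces then gives $h_0$ squares to the left of the pentagon and $k-1$ sheared squares to the right, $b+r$ faces in all. So the pentagon is the $k$th face from the right, as claimed. Your placement would make it the $(k+1)$st. It also does not exist for $r=0$, $k=b$, where $h_0=0$.

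\textbf{Remaining checks.}
\begin{itemize}
\item The diagonal relations $(r,h)\le(r+1,h+1)$ for $h>h_0$ are covers, since $(r,h+1)\not\le(r+1,h+1)$.
\item No cover skips a row. From Lemma~\ref{lem.partialorder2row}, $(r,h)\le(r+j,h')$ holds iff $h'\ge h$ when $h\le h_0$, and iff $h'\ge h+j$ when $h>h_0$. Both conditions factor through row $r+1$.
\item Your sentence about ``the vertical segment closing row $r$'' should be replaced. For $k\ge1$ there is no pendant vertex. The last diagonal cover $(r,b+r)\lessdot(r+1,b+r+1)$ closes each row of faces and absorbs the staircase vertex that dangles when $k=0$. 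For $r=a-1$ that vertex is $1^{\nu}$.
\end{itemize}
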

\begin{figure}[htbp]
    \centering
    \subfigure[]{\label{fig.2rowlattice1}\includegraphics[width=0.3\textwidth]{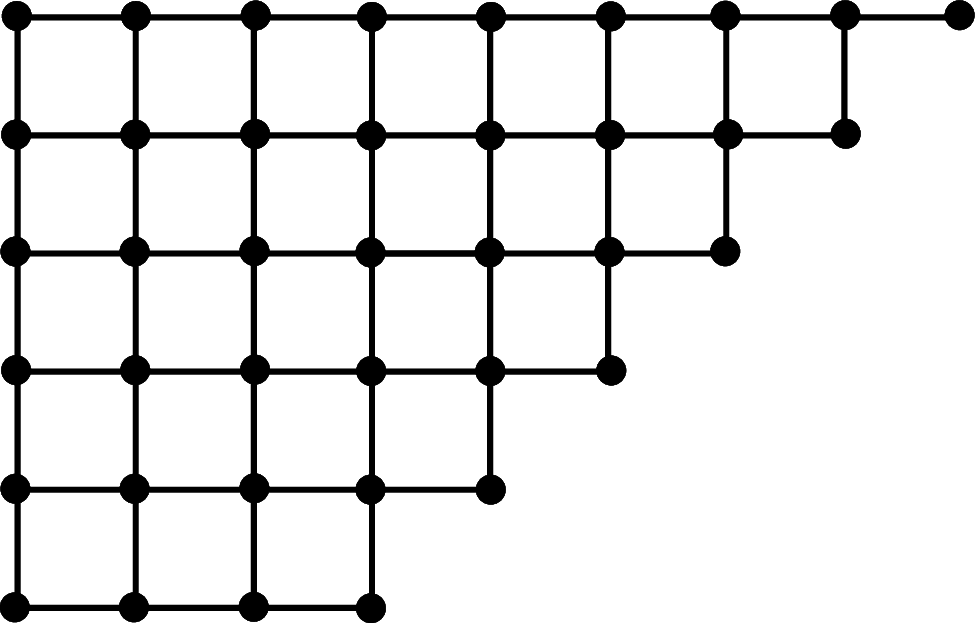}}
    \hspace{15mm}
    \subfigure[]{\label{fig.2rowlattice2}\includegraphics[width=0.3\textwidth]{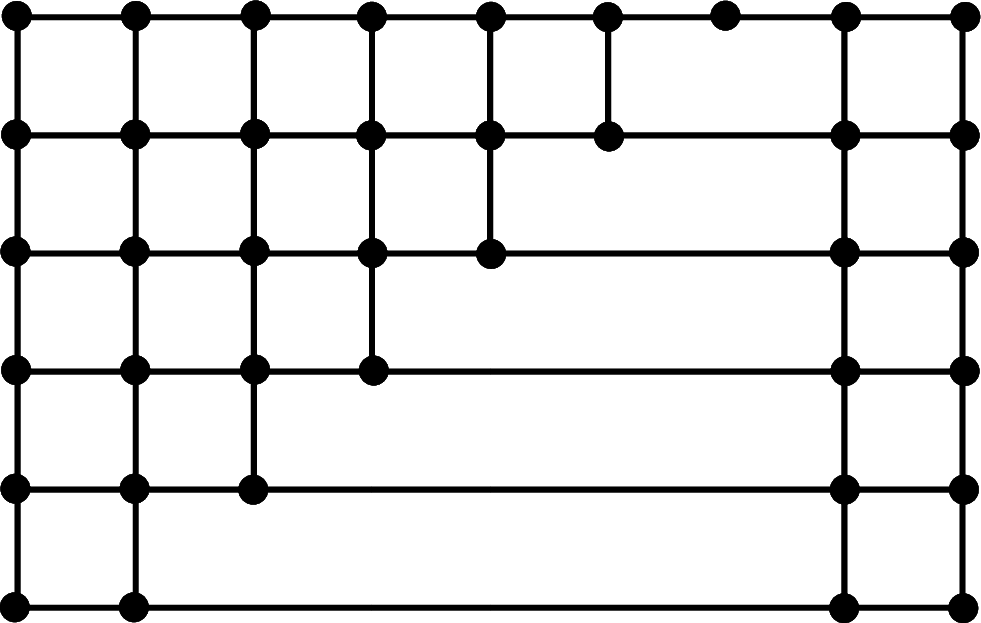}}
    \caption{The Hasse diagram of the alt $2$-row-Tamari lattice $\T_{\delta(k)}(a,b)$ with $(a,b)=(5,3)$ and (a) $k=0$, (b) $k=2$.}\label{fig.2rowlattice}
\end{figure}
\begin{proof}
    By Lemma \ref{lem.2rowbracket} and the discussion before Lemma \ref{lem.partialorder2row}, we may express each element $\mu$ of $\T_{\delta(k)}(a,b)$ as a triple $(a-s,u,v)$, where $s$ is the number of zeroes at the non-fixed positions of $\mathbf{b}(\mu)$ and $u$ (resp., $v$) is the number of $2$'s in $\alpha$ (resp., $\beta$). Recall that the possible pairs $(u,v)$ are given in \eqref{eq.2rowpartialorder}. 

    By Lemma \ref{lem.partialorder2row}, for a fixed value of $s=s_0$, the elements $\{(a-s_0,u,v)\}$ form the chain of $a+b-s_0+1$ elements, and then we place these elements on the line $y=a-s_0$ from left to right according to their partial order. Figure \ref{fig.2rowexample} shows the partial Hasse diagram of elements $\{(a-s_0,u,v)\}$ when $s=s_0,s_0-1,s_0-2$, and $v$ changes from $0$ to $1$. In the figure, we write $c=a+b-k-s_0$ for brevity. This is the place where a square is deformed into a pentagon in each row. 
    It is not hard to see that the remaining parts of the diagram are a rectangular lattice, which leads to the desired Hasse diagram as shown in Figure \ref{fig.2rowlattice2}.
    
    For $k=0$, $v$ is always $0$ in the triple; there is no square deformed into a pentagon. Hence, we obtain the desired Hasse diagram as shown in Figure \ref{fig.2rowlattice1}.
\end{proof}
\begin{figure}[hbt!]
        \centering
        \begin{tikzpicture}[scale = 0.9]
            \draw (0,4) -- (12,4);
            \draw (0,2) -- (12,2);
            \draw (0,0) -- (12,0);
            \draw (0,0) -- (0,4);
            \draw (2,0) -- (2,4);
            \draw (4,2) -- (4,4);
            \draw (10,0) -- (10,4);
            \draw (12,0) -- (12,4);
            \draw[dotted] (10,5) -- (10,4);
            \draw[dotted] (10,-1) -- (10,0);
            \draw[dotted] (12,5) -- (12,4) -- (13,4);
            \draw[dotted] (12,2) -- (13,2);
            \draw[dotted] (12,-1) -- (12,0) -- (13,0);
            \draw[dotted] (0,-1) -- (0,0) -- (-2,0); 
            \filldraw[black] (-1.5,0) circle (0pt) node [anchor=south] {$y=a-s_0$};
            \filldraw[black] (-1.5,2) circle (0pt) node [anchor=south] {$y=a-s_0+1$};
            \filldraw[black] (-1.5,4) circle (0pt) node [anchor=south] {$y=a-s_0+2$};
            \draw[dotted] (0,2) -- (-2,2);
            \draw[dotted] (-2,4)--(0,4)--(0,5);
            \draw[dotted] (2,4)--(2,5);
            \draw[dotted] (4,4)--(4,5);
            \filldraw[black] (12,4) circle (3pt) node [anchor=south east] {$(c+2,2)$};
            \filldraw[black] (12,2) circle (3pt) node [anchor=south east] {$(c+1,2)$};
            \filldraw[black] (12,0) circle (3pt) node [anchor=south east] {$(c,2)$};
            \filldraw[black] (10,4) circle (3pt) node [anchor=south east] {$(c+2,1)$};
            \filldraw[black] (10,2) circle (3pt) node [anchor=south east] {$(c+1,1)$};
            \filldraw[black] (10,0) circle (3pt) node [anchor=south east] {$(c,1)$};
            \filldraw[black] (6,4) circle (3pt) node [anchor=south east] {$(c+2,0)$};
            \filldraw[black] (4,4) circle (3pt) node [anchor=south east] {$(c+1,0)$};
            \filldraw[black] (2,4) circle (3pt) node [anchor=south east] {$(c,0)$};
            \filldraw[black] (0,4) circle (3pt);
            \filldraw[black] (0,2) circle (3pt);
            \filldraw[black] (0,0) circle (3pt);
            \filldraw[black] (2,0) circle (3pt) node [anchor=south east] {$(c,0)$};
            \filldraw[black] (2,2) circle (3pt) node [anchor=south east] {$(c,0)$};
            \filldraw[black] (4,2) circle (3pt) node [anchor=south east] {$(c+1,0)$};
        \end{tikzpicture}
    \caption{An illustration of forming pentagons in the Hasse diagram of $\T_{\delta(k)}(a,b)$.}
    \label{fig.2rowexample}
\end{figure}

\subsection{The invariance of rowmotion on $\T_{\delta}(a,b)$}

In this section, we prove that the orbit structure of the alt $2$-row-Tamari lattice $\T_{\delta(k)}(a,b)$ under rowmotion is independent of $\delta(k)$. The statistic $\mathsf{ddeg}$ over each such orbit is independent of $\delta(k)$ as well. We begin with the following definition.

\begin{definition}\label{def.switch}
    Let $(L,\leq)$ be a semidistributive lattice. We say $L$ has an \textit{$n$-switching property} if $L$ satisfies the following conditions:
    \begin{itemize}
        \item[(1)] we can embed $L$ into the rectangular coordinate system and identify each element as a lattice point $(x,y)$, where $x,y \geq0$;
        \item[(2)] $(0,0),(0,1),\dots,(0,n) \in L$, $(0,m) \notin L$ for $m > n$, and $(0,i-1) \lessdot (0,i)$ for $i=1,2,\dots n$;
        \item[(3)] two elements $(c_1,d_1) \lessdot (c_2,d_2)$ imply that the pair of numbers $(c_2-c_1,d_2-d_1) = (0,1)$ or $(m,0)$ for some $m$;
        \item[(4)] two elements $(c_1,d) \leq (c_2,d)$ if and only if $c_1 \leq c_2$.
    \end{itemize}
\end{definition}
We remark that the Hasse diagrams shown in Figure \ref{fig.2rowlattice} have the $n$-switching property ($n=5$). 
We show in the following proposition that the right boundary of the Hasse diagram forms a staircase. 
\begin{proposition}
    Let $L$ be a semidistributive lattice with the $n$-switching property. Define $$x_{m}=\max \{i| (i,m) \in L\}$$ to be the $x$-coordinate of the rightmost element on $y=m$ of $L$, for $m=0,1,\dots,n$. Then $x_0\leq x_1 \leq \cdots \leq x_n$.
\end{proposition}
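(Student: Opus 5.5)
We argue by contradiction, using only the structural conditions (1)--(4) of Definition \ref{def.switch} together with the fact that $L$ is a finite lattice. First we check that each $x_m$ is well defined: by condition (2), $(0,m)\in L$ for every $0\le m\le n$, so each row $y=m$ is nonempty and $x_m\ge 0$. We also record the meaning of $(x_m,m)$. Condition (3) says every cover relation is either a vertical unit step $(0,1)$ or a horizontal step $(m',0)$. Condition (4) says the elements on a row are totally ordered by their $x$-coordinate. Hence $(x_m,m)$ is the largest element of $L$ lying on the row $y=m$.

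Now fix $m$ with $0\le m<n$, and suppose for contradiction that $x_{m+1}<x_m$. Set $p=(x_m,m)$ and consider the top element $\hat 1$. Since $\hat 1\ge p$ and every cover step weakly increases both coordinates (condition (3)), there is a saturated chain from $p$ up to $\hat 1$. Along this chain the $y$-coordinate increases only by unit steps and never decreases. We need it to pass through row $m+1$, so we check that $\hat 1$ has $y$-coordinate at least $m+1$. This holds because $\hat 1\ge (0,n)$ and the order on lattice points is componentwise, as stated at the start of Section \ref{sec.pre}, so $\hat 1$ has $y$-coordinate $\ge n\ge m+1$.

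The chain therefore contains an element $q$ on row $m+1$. Its $x$-coordinate is at least $x_m$, since $x$-coordinates never decrease along the chain. So $q=(c,m+1)\in L$ with $c\ge x_m>x_{m+1}$, contradicting the maximality of $x_{m+1}$. This gives $x_m\le x_{m+1}$, and induction on $m$ yields $x_0\le x_1\le\cdots\le x_n$.

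The only delicate point is justifying that the chain moves in unit vertical steps, so that it cannot skip row $m+1$. This is exactly condition (3), which allows vertical covers only of the form $(0,1)$. Along the way we use that the embedding of condition (1) is the componentwise-order embedding fixed in Section \ref{sec.pre}, which gives both the monotonicity of coordinates along chains and the bound on the height of $\hat 1$. Semidistributivity is not needed for this statement.
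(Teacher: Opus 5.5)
Your proof is correct and is essentially the paper's argument: both rely on condition (3), under which vertical covers are unit steps and horizontal covers stay in their row moving rightward, together with the existence of upper bounds in a lattice, to force an element on row $m+1$ with $x$-coordinate at least $x_m$. The difference is cosmetic. The paper examines only the upper covers of $(x_\ell,\ell)$, shows none can exist, and concludes that $(x_\ell,\ell)$ and $(x_{\ell+1},\ell+1)$ would have no join. You instead follow a saturated chain from $(x_m,m)$ all the way to $\hat 1$.
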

\begin{proof}
    We assume the contrary that $x_\ell > x_{\ell+1}$ for some $\ell$. If there exists an element, say $(c,d)$, covers $(x_{\ell},\ell)$, then by the property (3) in Definition \ref{def.switch}, either $(c-x_{\ell},d-\ell) = (0,1)$ or $(m,0)$ for some $m$. We have $(c,d)=(x_{\ell},\ell+1)$ (resp., $(x_{\ell}+m,\ell)$), which violates the assumption that the element $(x_{\ell+1},\ell+1)$ (resp., $(x_{\ell},\ell)$) is the rightmost element of $L$ on the line $y=\ell+1$ (resp., $y=\ell$). Thus, no element of $L$ covers $(x_{\ell},\ell)$. 
    
    This leads to the fact that the least upper bound of $(x_{\ell},\ell)$ and $(x_{\ell+1},\ell+1)$ does not exist, which implies that $L$ is not a lattice, a contradiction. 
\end{proof}

We define an operation on two semidistributive lattices having the $n$-switching property. 
\begin{definition}\label{def.star}
    Let $A$ and $B$ be two semidistributive lattices having the $n$-switching property. Define an operation $A * B$ to be the poset on the elements of the disjoint union $A \sqcup B$ with the cover relation $x \lessdot y$ if and only if one of the following conditions holds:
    \begin{itemize}
        \item[(1)] $x \lessdot y$ in $A$, or
        \item[(2)] $x \lessdot y$ in $B$, or
        \item[(3)] $x=(x_m,m)_{A} \in A$ and $y=(0,m)_{B} \in B$, for $m=0,1,\dots,n$. 
    \end{itemize}
\end{definition}
\begin{theorem}\label{thm.switchrow}
    Let $A$ and $B$ be two semidistributive lattices having the $n$-switching property. Then 
    \begin{itemize}
        \item[(1)] $A*B$ is semidistributive,
        \item[(2)] rowmotion on $A*B$ and $B*A$ have the same orbit structure, and
        \item[(3)] the statistic $\mathsf{ddeg}$ over each orbit of $A*B$ and $B*A$ is the same.
    \end{itemize}
\end{theorem}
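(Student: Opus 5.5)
The plan is to describe the order, the lattice operations and rowmotion on $A*B$ explicitly in terms of those of $A$ and $B$, and then compare with $B*A$.

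\textbf{Order and lattice operations.} From Definition \ref{def.star}, $A$ is an order ideal of $A*B$ and $B$ is an order filter. For $x=(c,p)_A$ and $y=(e,q)_B$, unwinding the cover relations (3) together with Definition \ref{def.switch}(3),(4) should give
\[
x\le y \iff p\le q .
\]
Concretely, a chain from $x$ climbs inside $A$ to $(x_q,q)_A$ using the staircase monotonicity $x_p\le x_q$ from the preceding proposition, and then crosses to $(0,q)_B$. This yields explicit formulas:
\[
x\vee y = y\vee_B (0,p)_B, \qquad x\wedge y = x\wedge_A (x_q,q)_A .
\]
Meets and joins of two elements in the same piece agree with those computed in that piece. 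With these formulas, part (1) is a case check of the characterization ``$\{z : z\wedge y = x\}$ has a unique maximal element'', and dually for joins. I would split according to which of $x,y,z$ lie in $A$ or $B$, and in each mixed case reduce to semidistributivity of $A$ or of $B$ applied to the projected elements $(x_q,q)_A$ or $(0,p)_B$.

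\textbf{Rowmotion on $A*B$.} Next I compute $\Row_{A*B}$ via Theorem \ref{thm.charrow}. For $x\in A$ the lower covers are unchanged, so $\Pop^{\downarrow}_{A*B}(x)=\Pop^{\downarrow}_A(x)$. For $y\in B$ the lower covers change only when $y=(0,m)_B$, which acquires the extra lower cover $(x_m,m)_A$. I expect the outcome to be of the following shape, which I would verify carefully:
\begin{itemize}
\item $\Row_{A*B}$ maps most of $B$ into $B$ by $\Row_B$.
\item It maps most of $A$ into $B$, landing at the element of $B$ in the row prescribed by $\Row_A$; this uses that $x\wedge(e,q)_B$ depends only on $q$, so the maximal candidate in $B$ is the rightmost element of a row.
\item The left column $(0,m)_B$ and the right boundary $(x_m,m)_A$ are where the dynamics switch between the two pieces.
\end{itemize}
Thus each orbit of $A*B$ becomes a concatenation of segments governed by $\Row_A$ and segments governed by $\Row_B$, glued at the row indices $m\in\{0,\dots,n\}$.

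\textbf{Comparing $A*B$ with $B*A$ (main obstacle).} The key step is to encode an orbit as a cyclic word of pairs (piece, entry row, exit row), and to show that forming $B*A$ instead of $A*B$ produces the same cyclic data with the roles of the two blocks interchanged. I would then build an explicit bijection $A*B\to B*A$, equal to the identity on each of $A$ and $B$ up to relabeling the boundary elements. The goal is to show this bijection carries orbits to orbits, possibly shifting positions within each orbit. If a direct conjugacy fails, my fallback is to compare, for each row $m$, the first-return map to the left column in the two constructions, and check that the two first-return maps are conjugate with the same return times. Pinning down the exact boundary formulas for $\Row$ at $(0,m)_B$ and $(x_m,m)_A$, and checking that the return times match, is the step I expect to be hardest.

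\textbf{Down-degree, part (3).} In $A*B$ each element keeps its down-degree from $A$ or $B$, except that $(0,m)_B$ gains exactly one lower cover, for each $m=0,\dots,n$. Hence the orbit sum of $\mathsf{ddeg}$ in $A*B$ equals
\[
\sum_{z\in\mathcal O}\mathsf{ddeg}_{\text{piece}}(z) \;+\; \#\bigl(\mathcal O\cap\{(0,m)_B\}\bigr).
\]
Under the correspondence from the previous step, the segments match, so the first term agrees for $A*B$ and $B*A$. It remains to show that each orbit meets the left column of the upper piece ($B$ in $A*B$, $A$ in $B*A$) the same number of times. This should follow from the segment decomposition, since each passage from the lower piece into the upper piece enters through the left column exactly once.
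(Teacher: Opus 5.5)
Your order-theoretic description of $A*B$ is correct: $A$ is a down-set, $(c,p)_A\le (e,q)_B$ iff $p\le q$, and your meet/join formulas hold. Part (1) as a case check built on these is at the level of the paper. The genuine gap is part (2). You never establish the comparison between $A*B$ and $B*A$. Moreover, the shape of $\Row_{A*B}$ you expect is wrong, so the cyclic-word, conjugacy and first-return machinery is aimed at the wrong dynamics.

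Write $(x^A_m,m)_A$ and $(x^B_m,m)_B$ for the rightmost elements of row $m$ in $A$ and $B$. Take $x=(c,p)_A$ with $c>0$. Its lower covers in $A*B$ are those in $A$, so $\Pop^{\downarrow}(x)$ either lies in row $p$ (if $x$ covers only its horizontal neighbour), or lies in row $p-1$ strictly below $(c,p-1)_A$ (if $x$ also covers $(c,p-1)_A$). By your formula, $x\wedge(e,q)_B=x\wedge_A(x^A_q,q)_A$ behaves as follows: it equals $x$ for $q\ge p$; it lies in row $q$ for $q<p$; and it equals $(c,p-1)_A$ when $q=p-1$ and $x$ covers $(c,p-1)_A$. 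Comparing rows, it is never $\Pop^{\downarrow}(x)$. So no element of $B$ qualifies in Theorem \ref{thm.charrow}, and $\Row_{A*B}(x)=\Row_A(x)\in A$. Thus ``most of $A$'' stays in $A$, contrary to your prediction.

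The analogous (easier) statement holds for $B$ minus its left column, and the same two statements hold in $B*A$. Hence only the two left columns change pieces, and Theorem \ref{thm.charrow} gives, in \emph{both} $A*B$ and $B*A$,
\[
(0,m)_A\mapsto (x^B_{m-1},m-1)_B,\qquad (0,m)_B\mapsto (x^A_{m-1},m-1)_A\quad(1\le m\le n),
\]
together with $(0,0)_A\mapsto (x^B_n,n)_B$ and $(0,0)_B\mapsto (x^A_n,n)_A$. So $\Row_{A*B}$ and $\Row_{B*A}$ are literally the same permutation of $A\sqcup B$. This is the missing idea: it makes (2) immediate, with no relabelling, shifting, or conjugating bijection needed.

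For (3), your decomposition into $\sum_{z\in\mathcal O}\mathsf{ddeg}_{\text{piece}}(z)$ plus the number of visits to the left column of the upper piece is right. The reason you sketch is backwards, though. Orbits do not enter a piece through its left column; they leave through it, and land on the rightmost element of the row below in the other piece. By the displayed formulas, an orbit passes from $A$ to $B$ exactly at its visits to $\{(0,m)_A\}$, and from $B$ to $A$ exactly at its visits to $\{(0,m)_B\}$. Since these transitions alternate around a cycle, the two visit counts are equal. Together with the equality of the orbits as sets, this gives (3).
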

\begin{proof}
    (1) We first show that $A*B$ is a lattice. Take $x,y \in A*B$, it is clear that $x$ and $y$ have a unique least upper bound and a unique greatest lower bound if $x,y \in A$ or $x,y \in B$. Without loss of generality, we assume $x=(x_1,x_2)_A \in A$ and $y=(y_1,y_2)_B \in B$. By Condition (4) in Definition \ref{def.switch} and Condition (3) in Definition \ref{def.star}, there exist $x^{\prime}=(0,x_2)_B \in B$ and $y^{\prime}=(0,y_2)_A \in A$ such that $x \leq x^{\prime}$ and $y^{\prime} \leq y$.  One can see that $x$ and $y$ have a unique greatest lower bound (resp., least upper bound), which is given by the greatest lower bound (resp., least upper bound) of $x$ and $y^{\prime}$ (resp., $x^{\prime}$ and $y$). 

    Second, to prove the semidistributivity, we need to show that for all $x,y \in A*B$ with $x \leq y$, the set $M=\{z \in A*B| z \wedge y=x\}$ has a unique maximal element and the set $N=\{z \in A*B|z \vee x =y\}$ has a unique minimal element (Section \ref{sec.row}). 
        
    We show that the set $M$ has a unique maximal element; the proof that $N$ has a unique minimal element is similar and is omitted here. If $x \leq y$ are both in $B$, then $M$ has a unique maximal element in $B$ (since $B$ is semidistributive) and also in $A*B$ (by Condition (3) in Definition \ref{def.star}, elements in $B$ are either greater than or incomparable with elements in $A$). If $x \leq y$ are both in $A$, then again $M$ has a unique maximal element $z$ in $A$ since $A$ is semidistributive. Among all the elements of $M$, if $z$ is also maximal in $A*B$, then we are done. If not, we define $z^{\prime}$ to be the rightmost element of $B$ with the same $y$-coordinate as $z$. Clearly, $z \leq z^{\prime}$ and the $y$-coordinate of $z^{\prime} \wedge y$ is less than or equal to the $y$-coordinate of $z^{\prime}$ and $z$. Thus, $z^{\prime} \wedge y  \leq z \leq z^{\prime}$, which implies that $z^{\prime} \wedge y = z \wedge y =x$. So, $z^{\prime}$ is the desired unique maximal element of $M$. 

    Finally, for the case when $x \in A$, $y=(y_1,y_2)_B \in B$, and $x \leq y$, we claim that the elements of $M$ must belong to $A$. Suppose $z=(z_1,z_2)_B \in M \cap B$, due to Condition (2) in Definition \ref{def.switch}, $z \wedge y \geq (0,\min \{y_2,z_2\})_B$. This implies that $x = z \wedge y \in B$, which violates that $x \in A$. Now, we assume $z \in M \cap A$. Define the element $y^{\prime}$ to be the rightmost element of $A$ with the same $y$-coordinate as $y$. Clearly, $y^{\prime} \leq y$ and the $y$-coordinate of $z \wedge y$ is less than or equal to the $y$-coordinate of $y^\prime$ and $y$. Thus, $z \wedge y \leq y^{\prime} \leq y$, implying that $z \wedge y^{\prime} = z \wedge y$. This case reduces to finding a unique maximal element in $\{z \in A | z \wedge y^{\prime}=x\}$, which was discussed in the previous paragraph. This completes the proof of the first part.

    (2) Due to part (1), $A*B$ and $B*A$ are semidistributive, thus we can apply rowmotion on these two lattices. Let $(a_m,m)_A$ (resp., $(b_m,m)_B$) denote the rightmost element of $A$ (resp., $B$) whose $y$-coordinate is $m$, for $0 \leq m \leq n$. The key point is to find out where the elements $L(A)=\{(0,m)_A|0 \leq m \leq n\}$ and $L(B)=\{(0,m)_B|0 \leq m \leq n\}$ map to under rowmotion in both $A*B$ and $B*A$. We have the following claim.\\

    \noindent \textbf{Claim.}
        Let $x \in A \setminus L(A)$ and $y \in B \setminus L(B)$. Then $\Row_{A*B}(x) = \Row_{B*A}(x) \in A$ and $\Row_{A*B}(y) = \Row_{B*A}(y) \in B$.
    \begin{proof}[Proof of Claim.]
        We assume the coordinate of $x$ is given by $x=(x_1,x_2)_A$ with $x_1 \neq 0$. We first find the pop-stack sorting operator $\Pop_{A*B}^{\downarrow}(x)=x \wedge \bigwedge \{z \in A*B | z \lessdot x\}$ and then apply Theorem \ref{thm.charrow}. By Condition (3) in Definition \ref{def.switch}, since $x \notin L(A)$, we have the following two cases:
        \begin{itemize}
            \item[Case 1.] The element $x=(x_1,x_2)_A$ covers $c=(x_1,x_2-1)_A$ and $d=(x_1-m,x_2)_A$ in $A$, for some $m$ (the element $c$ always exists in $A$ due to Condition (2) in Definition \ref{def.switch}). Then $\Pop_{A*B}^{\downarrow}(x) = \Pop_{B*A}^{\downarrow}(x) = x \wedge c \wedge d$, which is the minimal element of the face containing $x$ as the maximal element. In this case, $\Row_{A*B}(x) = \Row_{B*A}(x) = c \wedge d \in A$. 
            
            \item[Case 2.] The element $x=(x_1,x_2)_A$ covers only $c=(x_1-m,x_2)_A$ for some $m$. Then $\Pop_{A*B}^{\downarrow}(x) = \Pop_{B*A}^{\downarrow}(x) = x \wedge c = c$. We define $R=\{z \in A*B | z \wedge x = c \}$, our goal is to show that $\Row_{A*B}(x) = \max R \in A$. Take an arbitrary element $w=(w_1,w_2)_B \in B$, if $w_2 \geq x_2$, then $x \leq (0,x_2)_B \leq (0,w_2)_B \leq w$ which implies that $w \wedge x = x$, so $w \notin R$. On the other hand, if $w_2 < x_2$, then the $y$-coordinate of $w \wedge x$ is $w_2$ which is less than the $y$-coordinate of $c$, so $w \notin R$. Thus, the element $z \in A*B$ having the property that $z \wedge x =c$ must lie in $A$. We also notice that $\Row_{B*A}(x) \wedge x = c$, then $c \leq \Row_{B*A}(x)$ which implies that $\Row_{B*A}(x) \in A$. Rowmotion on $A*B$ and $B*A$ is restricted to $A$, we have $\Row_{A*B}(x) = \Row_{B*A}(x) \in A$.
        \end{itemize}
        We can similarly argue that the image of an element of $B \setminus L(B)$ lies in $B$ under rowmotion, which is omitted here. 
    \end{proof}

   Next, we determine the image of elements of $L(A)$ and $L(B)$ under rowmotion on $A*B$. Notice that
    \begin{small}
    \begin{equation}
        \Pop_{A*B}^{\downarrow}((0,m)_A)=\begin{cases}
            (0,m-1)_A, & \text{if $m \neq 0$,}\\
            (0,0)_A, & \text{if $m=0$,}
        \end{cases}
        \quad \text{and} \quad
        \Pop_{A*B}^{\downarrow}((0,m)_B)=\begin{cases}
            (a_{m-1},m-1)_A, & \text{if $m \neq 0$,}\\
            (a_0,0)_A, & \text{if $m=0$.}
        \end{cases}
    \end{equation}
    \end{small}
    By Theorem \ref{thm.charrow}, one can readily check that
    \begin{small}
    \begin{equation}\label{eq.AstarB}
        \Row_{A*B}((0,m)_A) = \begin{cases}
            (b_{m-1},m-1)_B, & \text{if $m \neq 0$,}\\
            (b_n,n)_B, & \text{if $m=0$,}
        \end{cases}
        \quad \text{and} \quad 
        \Row_{A*B}((0,m)_B)=\begin{cases}
            (a_{m-1},m-1)_A, & \text{if $m \neq 0$,}\\
            (a_n,n)_A, & \text{if $m=0$.}
        \end{cases}
    \end{equation}
    \end{small}
    Similarly, in $B*A$, we have 
    \begin{small}
    \begin{equation}
        \Pop_{B*A}^{\downarrow}((0,m)_B)=\begin{cases}
            (0,m-1)_B, & \text{if $m \neq 0$,}\\
            (0,0)_B, & \text{if $m=0$,}
        \end{cases}
        \quad \text{and} \quad
        \Pop_{B*A}^{\downarrow}((0,m)_A)=\begin{cases}
            (b_{m-1},m-1)_B, & \text{if $m \neq 0$,}\\
            (b_0,0)_B, & \text{if $m=0$.}
        \end{cases}
    \end{equation}
    \end{small}
    By Theorem \ref{thm.charrow}, one can readily check that
    \begin{small}
    \begin{equation}\label{eq.BstarA}
        \Row_{B*A}((0,m)_A) = \begin{cases}
            (b_{m-1},m-1)_B, & \text{if $m \neq 0$,}\\
            (b_n,n)_B, & \text{if $m=0$,}
        \end{cases}
        \quad \text{and} \quad 
        \Row_{B*A}((0,m)_B)=\begin{cases}
            (a_{m-1},m-1)_A, & \text{if $m \neq 0$,}\\
            (a_n,n)_A, & \text{if $m=0$.}
        \end{cases}
    \end{equation}
    \end{small}

    By Claim, \eqref{eq.AstarB} and \eqref{eq.BstarA}, rowmotion on $A*B$ and rowmotion on $B*A$ behave the same. Therefore, rowmotion on $A*B$ and rowmotion on $B*A$ have the same orbit structure. This proves the second part.

    (3) From the proof of part (2), we know that the elements of the orbit of $A*B$ are the same as the elements of the corresponding orbit of $B*A$. In $A*B$ and $B*A$, the statistic $\mathsf{ddeg}$ only differs at the elements $(0,m)_A \in L(A)$ and $(0,m)_B \in L(B)$ for $m=0,1,\dots,n$. In $A*B$,
    \begin{equation}
        \mathsf{ddeg}(0,m)_A = \begin{cases}
            1, \text{ if $m \neq 0$,}\\
            0, \text{ if $m=0$,}
        \end{cases}
        \quad \text{and} \quad 
        \mathsf{ddeg}(0,m)_B = \begin{cases}
            2, \text{ if $m \neq 0$,}\\
            1, \text{ if $m=0$.}
        \end{cases}
    \end{equation}
    On the other hand, in $B*A$, 
    \begin{equation}
        \mathsf{ddeg}(0,m)_B = \begin{cases}
            1, \text{ if $m \neq 0$,}\\
            0, \text{ if $m=0$,}
        \end{cases}
        \quad \text{and} \quad 
        \mathsf{ddeg}(0,m)_A = \begin{cases}
            2, \text{ if $m \neq 0$,}\\
            1, \text{ if $m=0$.}
        \end{cases}
    \end{equation}
    
    In each orbit (of $A*B$ or $B*A$), the number of elements of $L(A)$ is the same as the number of elements of $L(B)$. This is due to Claim, \eqref{eq.AstarB} and \eqref{eq.BstarA} in the proof of the second part, rowmotion sends an element of $L(A)$ (resp., $L(B)$) to an element in $B$ (resp., $A$) while sending an element of $A \setminus L(A)$ (resp., $B \setminus L(B)$) to an element of $A$ (resp., $B$). Thus, each orbit contains the same number of elements of $L(A)$ and $L(B)$. This implies that the statistic $\mathsf{ddeg}$ over each orbit of $A*B$ and $B*A$ is the same. This finishes the proof of Theorem \ref{thm.switchrow}.
\end{proof}

\begin{theorem}\label{thm.2roworbit}
    Let $a,b$, and $k$ be nonnegative integers with $0 \leq k \leq b$. Under rowmotion on the alt $2$-row-Tamari lattice $\T_{\delta(k)}(a,b)$, the following two quantities are independent of the choices of the increment vector $\delta(k)$:
    \begin{itemize}
        \item[(1)] the orbit structure, and 
        \item[(2)] the statistic $\mathsf{ddeg}$ over each orbit. 
    \end{itemize}
\end{theorem}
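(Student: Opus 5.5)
The plan is to decompose each Hasse diagram $\T_{\delta(k)}(a,b)$ from Theorem~\ref{thm.lattice2row} as an iterated $*$-product of pieces with the $n$-switching property, where $n=a$. Then Theorem~\ref{thm.switchrow} lets us reorder the pieces. The goal is to show that passing from $\delta(k)$ to $\delta(k+1)$ amounts to swapping two adjacent factors, so every $k$ gives the same orbit structure and the same orbit sums of $\mathsf{ddeg}$.

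Concretely, I would use the coordinates from the proof of Theorem~\ref{thm.lattice2row}. Each element is a triple $(a-s,u,v)$, and for fixed $s$ the elements form a chain on the line $y=a-s$, ordered by $(u,v)$ as in \eqref{eq.2rowpartialorder}. Cut the diagram by vertical lines, so that the columns split into consecutive blocks. Let $P$ be the single pentagon column (the $k$th square from the right in each row, together with its boundary edges). The part to the left of $P$ is a rectangular-ish piece $A$ coming from $S(a,b)$, and the part to the right is a staircase piece $C$; for $k=0$ there is also the extra top point.

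The claim is that $\T_{\delta(k)}(a,b)\cong X*P*Y$, where $X$ is the block of columns to the left of $P$ and $Y$ is the block to the right. Both $X$ and $Y$ are built from grid columns, so $Y\cong$ (grid columns)$*\cdots$. The conditions of Definition~\ref{def.switch} are checked directly from Lemma~\ref{lem.partialorder2row}: vertical covers change $s$ by one, and horizontal covers change $u$ or $v$. The rightmost-element-to-leftmost-element covers in Definition~\ref{def.star}(3) are exactly the horizontal edges crossing a cutting line.

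The $*$ operation is associative, since it only adds covers between the rightmost elements of one piece and the leftmost elements of the next. Hence moving $P$ one column to the right means replacing $P*G$ by $G*P$, where $G$ is a single rectangular column of height $a$ (a chain $C_{a+1}$ times an edge, or a point column). Applying Theorem~\ref{thm.switchrow}(2),(3) to $A=P$, $B=G$ inside the larger product, and using the fact from the Claim that rowmotion restricted outside the boundary columns is local, shows that the orbit structure and the $\mathsf{ddeg}$ orbit sums agree for $\T_{\delta(k)}$ and $\T_{\delta(k+1)}$. For $k=0$, I would treat the extra top point together with the last column as the piece playing the role of $P$.

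I expect the main obstacle to be that Theorem~\ref{thm.switchrow} is stated for a two-factor product $A*B$, not for a swap of adjacent factors inside $X*A*B*Y$. I would handle this by viewing $X*(A*B)*Y$ as $(X*A*B)*Y$ and reproving the Claim in the relative setting. The point is that rowmotion on non-leftmost elements is computed within their own factor, and on leftmost elements it is determined by the rightmost elements of the previous factor, exactly as in \eqref{eq.AstarB}. That gives an explicit, position-independent description of $\Row$ on $X*A*B*Y$ and on $X*B*A*Y$.

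A second technical point is the staircase right boundary. The row lengths $a+b-i$ differ, so the columns near the right end have different heights, and each factor must still satisfy condition (2) with the same $n$. I would deal with this by grouping the whole ragged right part, together with the last full column, into a single factor $Y$, so that only full-height columns are ever swapped. Finally, $\mathsf{ddeg}$ equality follows from the counting argument in part (3) of Theorem~\ref{thm.switchrow}, applied once per swap.
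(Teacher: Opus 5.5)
There is a genuine gap: the factorization $\T_{\delta(k)}(a,b)\cong X*P*Y$ does not exist. Your plan needs a movable ``pentagon column'' $P$ and the ragged staircase frozen in the last factor $Y$, and the actual structure is the reverse.

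Split the elements according to whether $\beta$ contains a $2$, i.e.\ $v=0$ versus $v\ge 1$. Lemma~\ref{lem.partialorder2row} then gives the following.
\begin{itemize}
\item The $v=0$ elements in row $y=a-s$ form the chain $u=0,1,\dots,b-k+y$, so they are a copy of the staircase $\T_{\delta(0)}(a,b-k)$.
\item The $v\ge 1$ elements form the rectangle $C_{a+1}\times C_k$.
\item No vertical cover joins a $v=0$ element to a $v\ge1$ element. The only covers between the two parts are the horizontal ones from the last $v=0$ element of each row to the first $v\ge 1$ element.
\end{itemize}
Hence $\T_{\delta(k)}(a,b)=\T_{\delta(0)}(a,b-k)*(C_{a+1}\times C_k)$.

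Each pentagon is made of the staircase step $(b-k+y,y)\lessdot(b-k+y,y+1)\lessdot(b-k+y+1,y+1)$ of the left factor together with $(0,y)\lessdot(0,y+1)$ of the right factor. It is created by the junction of the two factors, so it does not sit inside any factor with the $a$-switching property.

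Write $G=C_{a+1}$ and $T_0=\T_{\delta(0)}(a,0)$. Then $\T_{\delta(k)}(a,b)=G^{*(b-k)}*T_0*G^{*k}$. The block that moves with $k$ is exactly the ragged staircase $T_0$, and everything to its right is a full-height rectangle. Putting ``the whole ragged right part together with the last full column'' into a fixed $Y$ and swapping only full-height columns therefore fails:
\begin{itemize}
\item for $k\ge 1$, such a $Y$ would contain $T_0*G$ and hence every pentagon, leaving nothing to serve as $P$;
\item for $k=0$, the relevant factor is the whole triangle $T_0$, not the extra top point plus one column.
\end{itemize}

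With the corrected factorization, your adjacent-swap idea does work: $G*T_0\leftrightarrow T_0*G$ changes $k$ by one. It still needs a relative version of Theorem~\ref{thm.switchrow} for swapping two factors inside a longer product, which the paper does not prove. That extension needs a real argument, because the proof of the Claim uses the two-factor structure: in $A*B$, $A$ is an order ideal and $B$ an order filter, and this fails for middle factors.

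The paper avoids the issue entirely. Since all other factors are copies of $G$, regroup as $\T_{\delta(k)}(a,b)=B_k*A_k$ and $\T_{\delta(0)}(a,b)=A_k*B_k$, where $A_k=C_{a+1}\times C_k$ and $B_k=\T_{\delta(0)}(a,b-k)$ both have the $a$-switching property. A single application of Theorem~\ref{thm.switchrow}(2),(3) then compares $\delta(k)$ directly with $\delta(0)$.
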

\begin{proof}
    Recall that the Hasse diagram of $\T_{\delta(k)}(a,b)$ is presented in Theorem \ref{thm.lattice2row}. $\T_{\delta(0)}(a,b)$ can be viewed as $A_k*B_k$, where $A_k$ is the product of two chains $C_{a+1} \times C_k$ and $B_k$ is $\T_{\delta(0)}(a,b-k)$, for each $k=1,2,\dots,b$. We note that both $A_k$ and $B_k$ are semidistributive lattices with the $a$-switching property. It is obvious that $\T_{\delta(k)}(a,b)$ is given by $B_k*A_k$. Then the first result follows from  Theorem \ref{thm.switchrow} (2). The second result follows from Theorem \ref{thm.switchrow} (3).
\end{proof}

\subsection{The orbit structure of $\T_{\delta}(a,b)$ under rowmotion}

In this section, we give an explicit orbit structure of the alt $2$-row-Tamari lattice $\T_{\delta(k)}(a,b)$ under rowmotion. We also show that the down-degree statistic on $\T_{\delta(k)}(a,b)$ is homometric under rowmotion. Due to Theorem \ref{thm.2roworbit}, it suffices to analyze the orbit structure of $\T_{\delta(k)}(a,b)$ under rowmotion when $\delta(k)=(0,0)$, that is, the $\nu$-Dyck lattice with $\nu = E^aNE^bN$. The statement is presented in the following theorem.

\begin{theorem}\label{thm.orbit}
    Let $a$ and $b$ be nonnegative integers. If $b=s(a+1)+r$ for $s\in \mathbb{Z}_{\geq 0}$ and $0 \leq r \leq a$, then the orbit structure and the down-degree statistic of the alt $2$-row-Tamari lattice $\T_{\delta(k)}(a,b)$ under rowmotion are independent of $\delta(k)$ and has the following properties:
    \begin{itemize}
        \item[(1)] for the orbit structure,
        \item there are $\displaystyle \left\lceil  \frac{a-r}{2} \right\rceil$ orbits $\mathcal{O}_1$ of size $a+2b+2-r$,
        \item there are $\displaystyle \left\lceil  \frac{r-1}{2} \right\rceil$ orbits $\mathcal{O}_2$ of size $2a+2b+3-r$,
        \item if $a$ and $r$ are even, then there is one extra orbit $\mathcal{O}_3$ of size $\displaystyle \frac{a-r}{2}+b+1$, 
        \item if $a$ is even and $r$ is odd, then there is one extra orbit $\mathcal{O}_4$ of size $\displaystyle \frac{3-r}{2}+a+b$, 
        \item if $a$ and $r$ are odd, then there are two extra orbits $\mathcal{O}_3$ and $\mathcal{O}_4$ of sizes $\displaystyle \frac{a-r}{2}+b+1$ and $\displaystyle \frac{3-r}{2}+a+b$, respectively. 

        \item[(2)] for the down-degree statistic,
            \begin{align*}
                \sum_{z \in \mathcal{O}_1} \mathsf{ddeg}(z) & = 2 |\mathcal{O}_1| -4-2s, \quad 
                \sum_{z \in \mathcal{O}_2} \mathsf{ddeg}(z)  = 2 |\mathcal{O}_2| -6-2s, \\
                \sum_{z \in \mathcal{O}_3} \mathsf{ddeg}(z) & = 2 |\mathcal{O}_3| -2-s, \quad \text{and} \quad
                \sum_{z \in \mathcal{O}_4} \mathsf{ddeg}(z)  = 2 |\mathcal{O}_4| -3- s.
            \end{align*}
        Moreover, the statistic $\mathsf{ddeg}$ is homometric under rowmotion.
    \end{itemize}
\end{theorem}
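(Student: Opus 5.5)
By Theorem \ref{thm.2roworbit}, both the orbit structure and the orbit sums of $\mathsf{ddeg}$ do not depend on $k$. So I will work only with $k=0$, that is, the $\nu$-Dyck lattice $\T_{\delta(0)}(a,b)$. By Theorem \ref{thm.lattice2row}, its Hasse diagram is the staircase $S(a,b)$ plus one extra point $P$ above the top right vertex. I will coordinatize it as follows. Row $y$ (for $0\le y\le a$) is the chain $\{(x,y)\mid 0\le x\le b+y\}$. The covers are $(x,y-1)\lessdot(x,y)$ whenever $x\le b+y-1$, and $(x-1,y)\lessdot(x,y)$. Finally $P$ covers $(a+b,a)$.

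\textbf{Step 1: compute rowmotion explicitly.} I will use Theorem \ref{thm.charrow}, exactly as in the proof of Theorem \ref{thm.rowhook}. The expected formulas are:
\begin{itemize}
\item For an interior point ($x\ge1$, $y\ge1$, $x\le b+y-1$), $\Pop^{\downarrow}(x,y)=(x-1,y-1)$, and $\Row(x,y)=(x-1,y-1)$.
\item For $x=0$, $y\ge1$: $\Row(0,y)=(b+y-1,y-1)$, the right end of the row below.
\item For a point covering only its left neighbour, i.e.\ $y=0$ with $x\ge1$, or $x=b+y$ with $y\ge1$: $\Row(x,y)=(x-1,a)$, the corresponding point in the top row. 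Here $P$ is excluded because $P\wedge(x,y)=(x,y)$.
\item $\Row(0,0)=P$ and $\Row(P)=(a+b,a)$.
\end{itemize}
Each of these is a short check of the maximal element of $\{z\mid z\wedge x=\Pop^{\downarrow}(x)\}$. Consequently, $\Row$ moves along diagonals, preserving $x-y$, until it reaches the left column or a right or bottom boundary. Each boundary hit is then a "jump" to the top row or to the right end of the row below.

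\textbf{Step 2: a first-return map.} I will record an orbit by its successive visits to the top row $y=a$. Starting from $(x,a)$, the diagonal run lasts $\min(x,a)$ steps, plus boundary adjustments, and then jumps. A run ending on the bottom row at $(x-a,0)$ jumps to $(x-a-1,a)$. A run ending in the left column at $(0,a-x)$ jumps to $(b+a-x-1,a-x-1)$. From there the diagonal continues down to the bottom row or into the next left-column hit. In this way I will obtain a piecewise-affine map on $\{0,\dots,a+b\}\cup\{P\}$. On the generic part it acts as $x\mapsto x-(a+1)$ modulo something like $a+b+1$ or $b+1$, together with a reflection $x\mapsto a+b-1-x$-type involution coming from the left-column jumps. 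Writing $b=s(a+1)+r$, I will solve the resulting linear congruences for the return times. This is where the residue $r$ and the counts $\lceil (a-r)/2\rceil$ and $\lceil (r-1)/2\rceil$ should appear: the reflection pairs up residues, and the paired residues give orbits of sizes $a+2b+2-r$ and $2a+2b+3-r$.

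\textbf{Step 3: the special orbits and $\mathsf{ddeg}$.} The unpaired residues, which are fixed by the reflection, depend on the parities of $a$ and $r$. These give the short orbits $\mathcal{O}_3$ and $\mathcal{O}_4$. One of them contains $P$, $(0,0)$ and $(a+b,a)$. I will identify these orbits and their lengths $\frac{a-r}{2}+b+1$ and $\frac{3-r}{2}+a+b$ by following the orbit through the symmetric point directly.

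For the down-degree I will use $\mathsf{ddeg}(x,y)=2-\varepsilon(x=0)-\varepsilon(y=0 \text{ or } x=b+y)$, with $\mathsf{ddeg}(P)=1$ and $\mathsf{ddeg}(0,0)=0$. So the orbit sum is $2|\mathcal{O}|$ minus the number of boundary hits in the orbit. By Step 2, each of the $s$ full passes of a run across $b$ contributes a fixed number of hits, and the endpoints contribute $4$, $6$, $2$ or $3$. This gives the stated formulas, and homometry follows because the formulas depend only on the orbit type; I will also check that orbits of different types cannot have equal sizes with different sums.

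\textbf{Main obstacle.} The hardest step will be the bookkeeping in Step 2: getting the first-return map exactly right at the corners, and deriving the parity split cleanly. I will first compute small cases (for instance $(a,b)=(2,3)$ and $(3,5)$) to pin down the return map before proving the congruence solution in general.
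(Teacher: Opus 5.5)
Your route is the paper's route: reduce to $k=0$ via Theorem \ref{thm.2roworbit}, compute $\Row$ explicitly, follow each orbit through its visits to the top row, and solve congruences modulo $a+1$ with $b=s(a+1)+r$. However, your model of the lattice has one element too many, and this corrupts the orbit through $(0,0)$. The $\nu$-Dyck lattice for $\nu=E^aNE^bN$ is $\{(x,y)\mid 0\le y\le a,\ 0\le x\le b+y\}$ with the componentwise order (send $E^iNE^jNE^{a+b-i-j}$ to $(a+b-i-j,\,a-i)$). Its maximum is $(a+b,a)$ itself. In $S(a,b)$ the top row of squares has $a+b-1$ squares, so the top vertices are $x=0,\dots,a+b-1$, and the ``extra point'' of Theorem \ref{thm.lattice2row} is exactly $(a+b,a)$. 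You counted $(a+b,a)$ in the top row and then added $P$ on top of it. In truth $\Row(0,0)=(a+b,a)$ and $\Row(a+b,a)=(a+b-1,a)$. So your model lengthens the orbit through $(0,0)$ by one and raises its $\mathsf{ddeg}$-sum by one; drop $\mathsf{ddeg}(P)=1$. For $(a,b)=(1,0)$ the lattice is a $3$-chain with one orbit of size $3=a+2b+2-r$, but your model is a $4$-chain. For $(a,b)=(2,1)$ the true orbit sizes are $5$ and $4$ (the latter is $\mathcal{O}_4$), but your model gives $5$ and $5$. Carried out as written, your computation would contradict the statement.

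Steps 2 and 3 are also wrong in substance, not just loose in bookkeeping.
\begin{itemize}
\item After the left-column jump you are at $(b+a-x-1,\,a-x-1)$, the right end of its row. By your own Step 1 rule the next step goes directly to $(a+b-x-2,\,a)$; there is no further diagonal run.
\item With the correct lattice, the first-return map on $\{(x,a)\mid 0\le x\le a+b-1\}$ is: $x\mapsto x-(a+1)$ after $a+1$ steps if $a+1\le x\le a+b-1$; $x\mapsto a+b-2-x$ after $x+2$ steps if $0\le x\le a-1$; and $a\mapsto a+b-1$ after $a+2$ steps, via $(0,0)$ and the maximum.
\item Hence, for $b>0$, two top-row points lie in one orbit if and only if $x_1\equiv x_2$ or $x_1+x_2\equiv a+b-2 \pmod{a+1}$. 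For $b=0$ the condition is $x_1+x_2\equiv a-2 \pmod{a+1}$ or $x_1=x_2=a-1$. This is Lemma \ref{lem.2roworbit1}.
\item Each orbit meets the window $\varepsilon(b>0)(b-1)\le x\le a+b-1$ once or twice. For $0<b\le a$ its size is $x_1+2$ or $x_1+x_2+4$ accordingly. The counts $\lceil (a-r)/2\rceil$, $\lceil (r-1)/2\rceil$ and the parity cases come from solving this congruence in the window (Lemma \ref{lem.2rowsol}).
\item Your claim that $(0,0)$ lies in one of the short orbits $\mathcal{O}_3,\mathcal{O}_4$ is false in general. The orbit through $(0,0)$ is just the return $a\mapsto a+b-1$. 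For $(a,b)=(2,0)$ it has size $4=|\mathcal{O}_1|$, and for $(a,b)=(2,2)$ it has size $9=|\mathcal{O}_2|$.
\item For $b=0$ the self-paired point $x=a-1$ is exceptional: it gives an orbit of size $a+2$, not a short one.
\end{itemize}
Your planned homometry check is fine once the sizes are right. Indeed $|\mathcal{O}_1|=2|\mathcal{O}_3|$ and $|\mathcal{O}_2|=2|\mathcal{O}_4|$, and the four types have pairwise distinct sizes.
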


The proof of Theorem \ref{thm.orbit} is outlined as follows:
\begin{itemize}
    \item For $a+b<2$, we analyze the orbit structure separately as their lattice structures are simple. This will be discussed at the beginning of the proof of Theorem \ref{thm.orbit}.
    \item For $a+b \geq 2$, we turn the problem into solving a linear congruence equation (Lemmas \ref{lem.2roworbit1} and \ref{lem.2roworbit2}).
    \item We further narrow down to the case $0 \leq b \leq a$ for solving the linear congruence equations \eqref{eq.equivb0} and \eqref{eq.equivb1}, which is presented in Lemma \ref{lem.2rowsol}.
    \item We finally present the proof of Theorem \ref{thm.orbit}.
\end{itemize}

We assume $a+b \geq 2$. Following Theorem \ref{thm.charrow}, one can readily check that rowmotion on $\mathsf{Dyck}(\nu)$ with $\nu=E^aNE^bN$ is given by
\begin{equation}\label{eq.rowmotionDyck2row}
    \Row(x,y)=\begin{cases}
        (a+b,a) & \text{if $x=y=0$},\\
        (y+b-1,y-1) & \text{if $x=0$ and $y>0$},\\
        (x-1,a) & \text{if $y=0$ and $x>0$, or $x=y+b$},\\
        (x-1,y-1) & \text{otherwise}.
    \end{cases}
\end{equation}
We observe two facts: first, rowmotion always maps the maximal element $(a+b,a)$ to $(a+b-1,a)$. Second, for any element $(x,y)$, there exists $k \in \mathbb{Z}_{\geq 0}$ such that $\Row^k(x,y) = (x^{\prime},a)$ for some $x^{\prime}$. The key idea is to find out which elements $(x,a)$, $0 \leq x \leq a+b-1$ (we exclude the maximal element) belong to the same orbit under rowmotion. Two lemmas are given below.
\begin{lemma}\label{lem.2roworbit1}
    Let $0 \leq x_1,x_2 \leq a+b-1$. When $b=0$, two elements $(x_1,a)$ and $(x_2,a)$ are in the same orbit if and only if
    \begin{equation}\label{eq.equivb0}
        x_1+x_2 \equiv a-2 \pmod{a+1} \quad \text{or} \quad  x_1=x_2=a-1.
    \end{equation}
    When $b>0$, two elements $(x_1,a)$ and $(x_2,a)$ are in the same orbit if and only if 
    \begin{equation}\label{eq.equivb1}
        x_1 + x_2 \equiv a+b-2 \pmod{a+1}\quad \text{or} \quad x_1 \equiv x_2 \pmod{a+1}.
    \end{equation}
\end{lemma}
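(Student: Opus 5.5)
The plan is to reduce the lemma to the first-return map of rowmotion on the top row $y=a$. I will use the explicit formula \eqref{eq.rowmotionDyck2row}, which by Theorem \ref{thm.2roworbit} is enough. Elements are $(x,y)$ with $0\le y\le a$ and $0\le x\le y+b$. For $0\le x\le a+b-1$, let $f(x)$ be the $x$-coordinate of the first element of the form $(x',a)$ with $x'\le a+b-1$ reached from $(x,a)$ by iterating $\Row$. The maximal element is only passed through, since $\Row(a+b,a)=(a+b-1,a)$. Because every orbit meets the top row, two top-row elements share an orbit exactly when they lie in the same cycle of the permutation $f$. So the lemma amounts to computing $f$ and its cycles.

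First I compute $f$ by following the diagonal moves $(x,y)\mapsto(x-1,y-1)$. Starting from $(x,a)$ with $x\le a+b-1$, we have $x-y\le b-1$ all along the diagonal, so the case $x=y+b$ never triggers before a boundary is reached. There are three cases.
\begin{itemize}
\item[(i)] If $x\ge a+1$, after $a$ steps we reach $(x-a,0)$ with $x-a>0$, and the next step gives $(x-a-1,a)$. So $f(x)=x-(a+1)$.
\item[(ii)] If $x=a$, we reach $(0,0)$, then the maximum, then $(a+b-1,a)$. So $f(a)=a+b-1$.
\item[(iii)] If $x<a$, after $x$ steps we reach $(0,a-x)$, then $(a-x+b-1,a-x-1)$. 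This point lies on the right boundary $x=y+b$ (or on $y=0$), so the next step is $(a+b-x-2,a)$. Hence $f(x)=a+b-2-x$.
\end{itemize}
When $b=0$ and $x=a-1$, case (iii) instead gives $(0,1)\mapsto(0,0)$, then the maximum, then $(a-1,a)$. So $f(a-1)=a-1$, which is the exceptional fixed point $x_1=x_2=a-1$ in \eqref{eq.equivb0}. When $b=0$, the range is $0\le x\le a-1$, so case (i) is vacuous, and the remaining $x<a-1$ satisfy $f(x)=a-2-x$. This is an involution, which gives \eqref{eq.equivb0} directly.

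For $b>0$, write $\bar c$ for a residue class mod $a+1$, restricted to $\{0,\dots,a+b-1\}$. Every value of $f$ lies in $\bar c$ or in $\overline{a+b-2-c}$. This holds also for $f(a)=a+b-1$, since $a+b-1\equiv a+b-2-a \pmod{a+1}$. So the union $\bar c\cup\overline{a+b-2-c}$ is $f$-invariant, which proves the ``only if'' direction.

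For the ``if'' direction I describe a cycle explicitly. Start at the largest element of $\bar c$ and descend by steps of $a+1$ using (i) until reaching the smallest element $x_0\le a$ of $\bar c$. If $x_0<a$, then (iii) sends it to $a+b-2-x_0$. This is the largest element of its class in range, because adding $a+1$ gives $2a+b-1-x_0>a+b-1$. If $x_0=a$, then (ii) sends it to $a+b-1$, the overall largest element. Either way $f$ jumps to the top of the partner class, descends it, and jumps back to the top of $\bar c$. Hence each cycle is exactly $\bar c\cup\overline{a+b-2-c}$. When the two classes coincide, the cycle is that single class. This is \eqref{eq.equivb1}.

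The main obstacle is the bookkeeping at the boundaries. One has to check that the case $x=y+b$ in \eqref{eq.rowmotionDyck2row} is never triggered in the middle of a diagonal, and that the corner cases ($x=a-1$ landing on $y=0$ and on $x=y+b$ simultaneously, and $b=0$ where row $0$ is a single point) are handled uniformly. I would verify these directly against \eqref{eq.rowmotionDyck2row} with small examples, such as $a=2$ and $b\in\{0,1\}$.
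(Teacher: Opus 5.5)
Your proposal is correct and follows the paper's proof: you use the same three-case computation of the next top-row element ($x\ge a+1$, $x=a$, $x<a$, plus the $b=0$, $x=a-1$ exception). Beyond that, you make explicit the cycle bookkeeping the paper leaves implicit, namely $f$-invariance of $\bar c\cup\overline{a+b-2-c}$ for ``only if'' and the descend-and-jump traversal showing this union is a single cycle for ``if''. One small remark: for $b=0$ your involution shows that distinct $x_1,x_2$ share an orbit iff $x_1+x_2=a-2$, so the trivial case $x_1=x_2\neq a-1$ has to be read into \eqref{eq.equivb0}, as the paper tacitly does.
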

\begin{proof}
    Assuming $b>0$, we discuss the following cases based on the value of $x$.
    \begin{itemize}
        \item[(Case 1)] $a+1 \leq x \leq a+b-1$. By \eqref{eq.rowmotionDyck2row}, $\Row^a(x,a) = (x-a,0)$ and $\Row^{a+1}(x,a) = (x-(a+1),a)$. 

        \item[(Case 2)] $x=a$. By \eqref{eq.rowmotionDyck2row}, $\Row^a(x,a) = (0,0)$, $\Row^{a+1}(x,a) = (a+b,a)$, and $\Row^{a+2}(x,a) = (a+b-1,a)$. Note that $2a+b-1 \equiv a+b-2 \pmod{a+1}$. 

        \item[(Case 3)] $0 \leq x \leq a-1$. By \eqref{eq.rowmotionDyck2row}, $\Row^x(x,a) = (0,a-x)$, $\Row^{x+1}(x,a) = (a-x+b-1,a-x-1)$, and $\Row^{x+2}(x,a) = (a-x+b-2,a)$. 
    \end{itemize}
    Case 1 leads to the equation $x_1 \equiv x_2 \pmod{a+1}$ while Cases 2 and 3 provide the equation $x_1+x_2 \equiv a+b-2 \pmod{a+1}$. 
    
    When $b=0$, we only have Case 3. Moreover, if $x=a-1$, then $\Row^{a}(a-1,a) = (0,0)$, $\Row^{a+1}(a-1,a)=(a,a)$, and $\Row^{a+2}(a-1,a)=(a-1,a)$. This shows that $(x_1,a)$ and $(x_2,a)$ are in the same orbit if and only if $x_1+x_2 \equiv a-2 \pmod{a+1}$ or $x_1=x_2=a-1$.
\end{proof}

\begin{lemma}\label{lem.2roworbit2}
    Each orbit contains either one or two elements whose coordinates are given by $(x,a)$ for $\varepsilon(b>0) \cdot (b-1) \leq x \leq a+b-1$. 
\end{lemma}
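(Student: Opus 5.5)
The plan is to study the first-return map of rowmotion to the top row $y=a$ and show that, restricted to the window $W=\{x \mid \varepsilon(b>0)\cdot(b-1)\le x\le a+b-1\}$, it is an involution. Cycles of an involution have length $1$ or $2$, which is exactly the claim.

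First, every orbit meets $W$. By the observation preceding Lemma \ref{lem.2roworbit1}, every element eventually reaches some $(x',a)$ under rowmotion. The maximal element $(a+b,a)$ is sent to $(a+b-1,a)$. Hence every orbit contains a top-row element $(x,a)$ with $0\le x\le a+b-1$. Next I would read off from the proof of Lemma \ref{lem.2roworbit1} (the three cases, which use only \eqref{eq.rowmotionDyck2row}) the map $T$ sending $x$ to the $x$-coordinate of the next top-row element, skipping the maximal element:
\begin{equation*}
T(x)=\begin{cases} x-(a+1), & a+1\le x\le a+b-1,\\ a+b-1, & x=a,\\ a+b-2-x, & 0\le x\le a-1.\end{cases}
\end{equation*}
Here ``next'' means the next top-row visit, and the case $x=a$ passes through $(0,0)$ and the maximum. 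From the definition, Case 1 strictly decreases $x$, so starting anywhere in $[0,a+b-1]$ the orbit reaches $[0,a]$. Then $T$ maps $[0,a]$ bijectively onto $W$, because $x\mapsto a+b-2-x$ sends $[0,a-1]$ onto $[b-1,a+b-2]$ and $a\mapsto a+b-1$. In particular every orbit meets $W$.

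Assume $b>0$. Since $W$ consists of $a+1$ consecutive integers, it is a complete residue system modulo $a+1$. Let $r(w)$ be the representative of $w$ in $[0,a]$. Starting from $w\in W$, the Case 1 steps of $T$ lead to exactly $r(w)$, and then one more step lands at $\sigma(w):=T(r(w))\in W$. So the successive visits of an orbit to $W$ are obtained by iterating $\sigma$, and $\sigma$ is a bijection of $W$ because both $r$ and $T|_{[0,a]}$ are bijections. Reducing modulo $a+1$, we have $\sigma(w)\equiv a+b-2-w$. For $r(w)=a$ this needs the check $a+b-1\equiv a+b-2-a\pmod{a+1}$, which holds. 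Thus $\sigma$ is conjugate, via residues, to the involution $x\mapsto a+b-2-x$ on $\mathbb{Z}/(a+1)$, so $\sigma^2=\mathrm{id}$. Each rowmotion orbit meets $W$ in exactly one $\sigma$-cycle, which is nonempty, so its intersection with $W$ has one or two elements. Note that Case 1 steps never revisit $W$ before reaching $[0,a]$ except at the starting point. This is because the points $w-(a+1), w-2(a+1),\dots$ lie below $b-1$ once they leave $W$.

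For $b=0$ the window is $[0,a-1]$, and only Case 3 of Lemma \ref{lem.2roworbit1} occurs, together with the special value $x=a-1$ returning to itself through $(0,0)$ and $(a,a)$. So $\sigma(x)=a-2-x$ on $[0,a-2]$ and $\sigma(a-1)=a-1$. This is again an involution, and the same conclusion follows.

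The main obstacle I expect is the bookkeeping that $\sigma$ is well defined as the next $W$-visit. One must check that between leaving $w\in W$ and reaching $\sigma(w)$ the orbit visits no other point of $W$, including the passage through the maximal element when $r(w)=a$. It is exactly this that makes the orbit's $W$-visits equal to one $\sigma$-cycle rather than a union of cycles.
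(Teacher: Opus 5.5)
Your argument is correct, but it is organized differently from the paper's.

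The paper proves only the upper bound, and it works from the \emph{statement} of Lemma \ref{lem.2roworbit1}. Two distinct elements of the window lying in one orbit cannot be congruent modulo $a+1$, since the window consists of $a+1$ consecutive integers. Hence they satisfy $x_1+x_2\equiv a+b-2 \pmod{a+1}$. Three distinct such elements would then force $x_3\equiv x_1$, and so $x_3=x_1$, a contradiction.

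You instead return to the three case computations inside the \emph{proof} of Lemma \ref{lem.2roworbit1}. From them you build the first-return map $\sigma$ of rowmotion to the window $W$ and show it is an involution, by conjugating it to $x\mapsto a+b-2-x$ on $\mathbb{Z}/(a+1)$.

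The paper's route is shorter once Lemma \ref{lem.2roworbit1} is granted. Yours has three advantages:
\begin{itemize}
\item It is self-contained. In effect it proves the ``same orbit $\Rightarrow$ congruence'' direction that the paper's argument relies on, rather than citing the terse case summary.
\item It establishes the lower bound, that every orbit meets $W$. The paper's proof leaves this half of ``one or two'' implicit.
\item It records the cyclic order of the window visits, which is exactly what the later orbit-size count ($x_1+2$ or $x_1+x_2+4$) uses.
\end{itemize}

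Your bookkeeping is sound. After leaving $W$, the Case~1 iterates $w-j(a+1)$ fall below $b-1$, and the maximal element $(a+b,a)$ lies outside $W$, so no extra window visits occur between $w$ and $\sigma(w)$.

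For $b=0$, your separate treatment, with $\sigma(x)=a-2-x$ and the fixed point $a-1$ passing through $(0,0)$ and $(a,a)$, agrees with the paper. As in the paper, it relies on the standing assumption $a+b\ge 2$, which ensures $W\neq\emptyset$.
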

\begin{proof}
    We assume that there are more than two elements $(x_i,a)$, $b-1 \leq x_i \leq a+b-1$ in the same orbit. We apply Lemma \ref{lem.2roworbit1}. Within the chosen interval of length $a$,  distinct elements must satisfy $x_{i_1}+x_{i_2} \equiv a+b-2 \pmod{a+1}$ and $x_{i_2}+x_{i_3} \equiv a+b-2 \pmod{a+1}$. This implies that $x_{i_3}-x_{i_1} \equiv 0 \pmod{a+1}$. Since $-a \leq x_{i_3}-x_{i_1} \leq a$, this forces $x_{i_3}=x_{i_1}$. When $b=0$, we require $x_i$'s to be nonnegative; the above argument still holds. So, each orbit contains at most two elements $(x_i,a)$ where $\varepsilon(b>0) \cdot (b-1) \leq x_i \leq a+b-1$.
\end{proof}

Solving the linear congruence equations \eqref{eq.equivb0} and \eqref{eq.equivb1} is crucial for analyzing the orbit structure. By Lemma \ref{lem.2roworbit2}, each orbit contains at most two elements $(x,a)$ in the representative interval $\varepsilon(b>0) \cdot (b-1) \leq x \leq a+b-1$. When $b>a$, we can shift the interval to $r-1 \leq x \leq a+r-1$, where $r \equiv b \pmod{a+1}$. Thus, we may narrow down to the case $0 \leq b \leq a$. To describe the solutions of \eqref{eq.equivb0} and \eqref{eq.equivb1}, we define the following sets, which consist of pairs of nonnegative integers. 
\begin{align*}
    X_1 & = \left\{ \left( \frac{a}{2}-1-m,\frac{a}{2}-1+m  \right) \bigg| m=0,1,\dots,\frac{a-2}{2} \right\} \quad \text{for $a$ even},\\
    X_2 & = \left\{ \left( \frac{a}{2}-1-\frac{2m+1}{2},\frac{a}{2}-1+\frac{2m+1}{2}  \right) \bigg| m=0,1,\dots,\frac{a-3}{2} \right\} \quad \text{for $a$ odd},\\
    Y_1 &= \left\{ \left(a+\frac{b-1}{2}-m,a+\frac{b-1}{2}+m \right) \bigg| m=0,1,\dots,\frac{b-1}{2} \right\} \quad \text{for $b$ odd}, \\
    Y_2 &= \left\{ \left(a+\frac{b-1}{2}-\frac{2m+1}{2},a+\frac{b-1}{2}+\frac{2m+1}{2} \right) \bigg| m=0,1,\dots,\frac{b-2}{2} \right\} \quad \text{for $b$ even}, \\
    Z_1 &= \left\{ \left(\frac{a+b}{2}-1-m,\frac{a+b}{2}-1+m \right) \bigg| m=0,1,\dots,\frac{a-b}{2} \right\} \quad \text{for $a \equiv b \pmod{2}$}, \text{ and}\\
    Z_2 &= \left\{ \left( \frac{a+b}{2}-1-\frac{2m+1}{2},\frac{a+b}{2}-1+\frac{2m+1}{2} \right) \bigg| m=0,1,\dots,\frac{a-b-1}{2} \right\} \quad \text{for $a \not \equiv b \pmod{2}$}.
\end{align*}
\begin{lemma}\label{lem.2rowsol}
    Assume $0 \leq b \leq a$. When $b>0$, the solution set $X(a,b)=\{(x_1,x_2) | \varepsilon(b>0) \cdot (b-1) \leq x_1,x_2 \leq a+b-1\}$ of 
    \begin{equation}\label{eq.congurence}
        x_1 + x_2 \equiv a+b-2 \pmod{a+1}
    \end{equation}
    is given by
    \begin{equation*}
        X(a,b) = \begin{cases}
            Y_1 \cup Z_1 & \text{for $a,b$ odd,}\\
            Y_2 \cup Z_1 & \text{for $a,b$ even,}\\
            Y_1 \cup Z_2 & \text{for $a$ even and $b$ odd,}\\
            Y_2 \cup Z_2 & \text{for $a$ odd and $b$ even.}\\
        \end{cases}
    \end{equation*}
    When $b=0$, the solution set reduces to
    \begin{equation*}
        X(a,0) = \begin{cases}
            X_1 & \text{for $a$ even,}\\
            X_2 & \text{for $a$ odd.}
        \end{cases}
    \end{equation*}
\end{lemma}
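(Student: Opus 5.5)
The plan is to exploit the fact that the window of admissible values is a complete residue system modulo $a+1$, so the congruence \eqref{eq.congurence} reduces to a small number of exact linear equations $x_1+x_2=S$. I read the solution set $X(a,b)$ as a set of unordered pairs, recorded with $x_1\le x_2$. This is the convention used in the sets $X_i,Y_i,Z_i$, where the first coordinate is always the smaller one.

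First, treat $b>0$. Put $I=[b-1,\,a+b-1]$, an interval of $a+1$ consecutive integers. For $x_1,x_2\in I$ the sum $x_1+x_2$ ranges over $[2b-2,\,2a+2b-2]$, and I list the integers in this range that are congruent to $a+b-2$ modulo $a+1$.
\begin{itemize}
\item $S_0=a+b-2$ lies in the range, since $a\ge b$ gives $S_0\ge 2b-2$.
\item $S_1=a+b-2+(a+1)=2a+b-1$ lies in the range, since $b\ge1$ gives $S_1\le 2a+2b-2$.
\item $S_0-(a+1)=b-3$ is smaller than $2b-2$ whenever $b\ge 0$, so it is excluded.
\item $S_0+2(a+1)=3a+b$ is larger than $2a+2b-2$ because $a\ge b-1$, so it is excluded.
\end{itemize}
Hence the solutions are exactly the pairs in $I$ with $x_1+x_2\in\{S_0,S_1\}$.

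Next, I parametrize each of the two sums symmetrically around its midpoint.
\begin{itemize}
\item For $x_1+x_2=S_0$, write $x_{1,2}=\tfrac{a+b}{2}-1\mp m$, where $m$ is an integer if $a\equiv b\pmod 2$ and a half-integer otherwise. The constraint $x_1\ge b-1$ gives $m\le \tfrac{a-b}{2}$. The upper constraint $x_2\le a+b-1$ is then automatic, because $x_2\le a-1$. This recovers exactly $Z_1$ or $Z_2$ according to the parity of $a-b$.
\item For $x_1+x_2=S_1$, the midpoint is $a+\tfrac{b-1}{2}$. Here the binding constraint is $x_2\le a+b-1$, which gives $m\le \tfrac{b-1}{2}$. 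The lower constraint is automatic, since $x_1\ge a\ge b-1$. This recovers $Y_1$ for $b$ odd and $Y_2$ for $b$ even, where in the latter case $m$ runs over the half-integers up to $\tfrac{b-1}{2}$.
\end{itemize}
Combining the parities of $a$ and $b$ gives the four stated cases.

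Finally, treat $b=0$. The window is $[0,a-1]$ and the sums lie in $[0,2a-2]$. The only value congruent to $a-2$ modulo $a+1$ in this range is $a-2$ itself, because $2a-1>2a-2$ and $-3<0$. Solving $x_1+x_2=a-2$ with $x_1\ge0$ and the same midpoint parametrization gives $X_1$ for $a$ even and $X_2$ for $a$ odd. The exceptional fixed pair $x_1=x_2=a-1$ from \eqref{eq.equivb0} is not a solution of the congruence, so it is handled separately in the proof of Theorem \ref{thm.orbit}.

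The main obstacle is mostly bookkeeping. I need to check carefully that in each sum exactly one of the two boundary constraints binds, that the endpoints of $m$ match the ranges in the definitions of the $X_i,Y_i,Z_i$, and that degenerate cases such as $a=b$ or $b=1$ (where $Y_1=\{(a,a)\}$) are included correctly. I also need to confirm that the sets $Y$ and $Z$ are disjoint, which holds because their pair sums are different.
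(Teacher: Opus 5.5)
Your argument is correct, but it proceeds differently from the paper's.

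The paper proves the lemma by verification. In the case $a,b$ odd, it checks that every pair in $Y_1\cup Z_1$ satisfies \eqref{eq.congurence}. It then checks that each integer of the window $[b-1,a+b-1]$ occurs exactly once among the entries, except the midpoints $a+\frac{b-1}{2}$ and $\frac{a+b}{2}-1$, which occur as diagonal pairs. Completeness rests implicitly on the window being a complete residue system modulo $a+1$. Each $x_1$ then has exactly one partner $x_2$, so a family of valid pairs covering the window must be the whole solution set. The other three parity cases are left to the reader. The case $b=0$ is obtained by deleting from $Z_1$ or $Z_2$ the unique pair containing $-1$.

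You instead derive the sets directly. Confining $x_1+x_2$ to $[2b-2,2a+2b-2]$ leaves only the exact sums $a+b-2$ and $2a+b-1$ (only $a-2$ when $b=0$). Solving each linear equation under the box constraints, where exactly one constraint binds, yields $Z_1$ or $Z_2$ and $Y_1$ or $Y_2$ with precisely the stated ranges of $m$.

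What your route buys:
\begin{itemize}
\item a uniform treatment of all four parity cases and of $b=0$;
\item an explicit statement of the unordered-pair convention that the paper leaves implicit;
\item the fact that the $Z$-pairs and $Y$-pairs are exactly the solutions with sums $a+b-2$ and $2a+b-1$. This is what produces the two orbit sizes $a+b+2$ and $2a+b+3$ in the proof of Theorem \ref{thm.orbit}.
\end{itemize}
The paper's route is shorter once the pairing structure is taken for granted.

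One small remark: despite your opening sentence, your argument never uses the complete-residue-system property. It relies on the length of the range of sums instead. That is exactly why it goes through unchanged for $b=0$, where the window $[0,a-1]$ has only $a$ elements.
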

\begin{proof}
    For $a,b$ odd and $b>0$, it is straightforward to verify that each pair $(x_1,x_2)$ in $Y_1 \cup Z_1$ satisfies \eqref{eq.congurence} and these numbers range from $b-1$ to $a+b-1$ exactly once except for two numbers $a+\frac{b-1}{2}$ and $\frac{a+b}{2}-1$ (when $m=0$). The remaining three cases can be verified in a similar way, which are left to the reader.
    
    For $b=0$, since $x_1,x_2$ are nonnegative, we delete one pair from the sets $Z_1$ (when $m=\frac{a}{2}$) and $Z_2$ (when $m = \frac{a-1}{2}$) involving $-1$, which results in the sets $X_1$ and $X_2$, respectively. It is obvious that $Y_2$ is empty. Thus, the solution sets reduce to $X_1$ for $a$ even and $X_2$ for $a$ odd.
\end{proof}

Now, we present the proof of Theorem \ref{thm.orbit}.
\begin{proof}[Proof of Theorem \ref{thm.orbit}]
    Due to Theorem \ref{thm.2roworbit}, the orbit structure and the down-degree statistic are independent of the increment vector $\delta(k)$. We take $\delta(k)=(0,0)$.
    
    When $a+b<2$, there are three cases $(a,b)=(0,0),(0,1)$, and $(1,0)$. Respectively, their Hasse diagrams reduce to a singleton, a chain of two elements, and a chain of three elements. Under rowmotion, this gives one orbit of sizes $1,2$, and $3$. Moreover, the total down-degree statistic over the corresponding orbits is $0,1$, and $2$. It is easy to verify that this agrees with the statement. 

    Now, we assume $a+b \geq 2$ and consider the case where $0 \leq b \leq a$. By Lemmas \ref{lem.2roworbit1} and \ref{lem.2roworbit2}, the linear congruence equations \eqref{eq.equivb0} and \eqref{eq.equivb1} have a solution $(x_1,x_2)$ ($\varepsilon(b>0) \cdot (b-1) \leq x_1,x_2 \leq a+b-1$) if and only if the orbit consists of two elements $(x_1,a)$ and $(x_2,a)$. Notice that when successively applying rowmotion on $(x_1,a)$, each rowmotion decreases the $x$-coordinate by $1$ (by \eqref{eq.rowmotionDyck2row}) until we reach the $y$-axis, this takes $x_1$ applications of rowmotion. From here, we need $2$ more applications of rowmotion to reach $(x_2,a)$. If $x_1=x_2$, then we obtain an orbit with size $x_1+2$. If $x_1 \neq x_2$, then by a similar reasoning, we require $x_2+2$ applications of rowmotion on $(x_2,a)$ to reach $(x_1,a)$. In total, this gives an orbit with size $x_1+x_2+4$.

    Next, we determine the number of orbits and their sizes under rowmotion, which can be obtained by counting the number of solutions $X(a,b)$ given in Lemma \ref{lem.2rowsol}. Consider the case when $b>0$, the sets $Y_1$ and $Y_2$ both consist of $\left \lceil \frac{b-1}{2} \right\rceil$ pairs $(x_1,x_2)$ with $x_1 \neq x_2$, which implies that the orbit containing these two distinct elements $(x_1,a)$ and $(x_2,a)$ has size $x_1+x_2+4 = 2a+b+3$. Moreover, $Y_1$ consists of one extra pair $(x_1,x_2)$ with $x_1=x_2$, which gives one orbit of size $x_1+2 = a + \frac{b+3}{2}$ (when $b$ is odd). We also notice that from $(x_1,a)$ to $(x_2,a)$ in these orbits, the sequence of applications of rowmotion involves three elements, which are on the left, bottom, and right sides of its Hasse diagram, respectively. It is also possible to involve two elements, the element $(0,0)$ and the maximal element. In both situations, the total down-degree statistic over the elements of each orbit is given by twice the orbit size minus $3$ (if $x_1=x_2$) or minus $6$ (if $x_1 \neq x_2$). 
    
    Similarly, the sets $Z_1$ and $Z_2$ both consist of $\left \lceil \frac{a-b}{2} \right\rceil$ pairs $(x_1,x_2)$ with $x_1 \neq x_2$, which implies that the orbit containing these two distinct elements $(x_1,a)$ and $(x_2,a)$ has size $x_1+x_2+4 = a+b+2$. Moreover, $Z_1$ consists of one extra pair $(x_1,x_2)$ with $x_1=x_2$, which gives one orbit of size $x_1+2 = \frac{a+b}{2}+1$ (when $a \equiv b \pmod{2}$). Note that from $(x_1,a)$ to $(x_2,a)$ in these orbits, the sequence of applications of rowmotion involves two elements (excluding $(0,0)$), which are on the left and right sides of its Hasse diagram, respectively. Therefore, the total down-degree statistic over the elements of each orbit is given by twice the orbit size minus $2$ (if $x_1=x_2$) or minus $4$ (if $x_1 \neq x_2$).

    For the case when $b=0$, the sets $X_1$ and $X_2$ consist of $\left \lceil \frac{a-2}{2} \right\rceil$ pairs $(x_1,x_2)$ with $x_1 \neq x_2$. This implies that the orbit containing these two distinct elements $(x_1,a)$ and $(x_2,a)$ has size $x_1+x_2+4 = a+2$. Moreover, $X_1$ consists of one extra pair $(x_1,x_2)$ with $x_1=x_2$, which gives one orbit of size $x_1+2 = \frac{a}{2}+1$ (when $a$ is even). Notice that there is one more orbit which contains the element $(a-1,a)$ (from \eqref{eq.equivb0}), it has size $a+2$. By a similar argument in the previous paragraph, the total down-degree statistic over the elements of each orbit is given by twice the orbit size minus $2$ (if $x_1=x_2$) or minus $4$ (if $x_1 \neq x_2$).

    We summarize the above discussion (when $a+b \geq 2$ and $0 \leq b \leq a$) as follows:
    \begin{itemize}
        \item for any $a$ and $b$, there are $\left \lceil \frac{a-b}{2} \right\rceil$ orbits of size $a+b+2$,
        \item for any $a$ and $b$, there are $\left \lceil \frac{b-1}{2} \right\rceil$ orbits of size $2a+b+3$,
        \item when $a$ and $b$ are even, there is one extra orbit of size $\frac{a+b}{2}+1$,
        \item when $a$ is even and $b$ is odd, there is one extra orbit of size $a+\frac{b+3}{2}$, and
        \item when $a$ and $b$ are both odd, there are two extra orbits of sizes $a+\frac{b+3}{2}$ and $\frac{a+b}{2}+1$.
    \end{itemize}
    
    Finally, we consider the case $b > a$. Let $r$ be the remainder and $s$ be the quotient of $b$ when dividing by $a+1$. The Hasse diagram of $\T_{\delta(k)}(a,b)$ can be obtained from the Hasse diagram of $\T_{\delta(k)}(a,r)$ by attaching a rectangular lattice of size $a \times (b-r)$ to its left. The number of orbits can be determined by analyzing rowmotion on $\T_{\delta(k)}(a,r)$, namely the solution set $X(a,r)$, as summarized in the previous paragraph. Note that each element $(x_1,a)$ requires extra $s(a+1) = b-r$ applications of rowmotion to reach $(x_2,a)$, where $b-1 \leq x_1,x_2 \leq a+b-1$. This gives the orbit sizes of rowmotion on $\T_{\delta(k)}(a,b)$. To recover the total down-degree statistic over the elements in each orbit, we need to count the number of elements on the bottom side of its Hasse diagram in each orbit. It is not hard to see that from $(x_1,a)$ to $(x_2,a)$ in each orbit, the sequence of applications of rowmotion involves $\left \lfloor \frac{b}{a+1} \right \rfloor =s$ elements on the bottom side of its Hasse diagram. 

    For any $a$ and $b$, there are $\left \lceil \frac{a-r}{2} \right\rceil$ orbits $\mathcal{O}_1$ of size $a+r+2+2(b-r) = a+2b+2-r$, and
    \begin{equation*}
        \sum_{z \in \mathcal{O}_1} \mathsf{ddeg}(z) = 2|\mathcal{O}_1| - 4 - 2s.
    \end{equation*}
    There are also $\left \lceil \frac{r-1}{2} \right\rceil$ orbits $\mathcal{O}_2$ of size $2a+r+3+2(b-r) = 2a+2b+3-r$, and 
    \begin{equation*}
        \sum_{z \in \mathcal{O}_2} \mathsf{ddeg}(z) = 2|\mathcal{O}_2| - 6 - 2s.
    \end{equation*}
    When $a$ and $r$ are even, there is one extra orbit $\mathcal{O}_3$ of size $\frac{a+r}{2}+1+(b-r) = \frac{a-r}{2}+b+1$, and
    \begin{equation*}
        \sum_{z \in \mathcal{O}_3} \mathsf{ddeg}(z) = 2|\mathcal{O}_3| - 2 -s.
    \end{equation*}
    When $a$ is even and $r$ is odd, there is one extra orbit $\mathcal{O}_4$ of size $a+\frac{r+3}{2}+(b-r) = \frac{3-r}{2}+a+b$, and
    \begin{equation*}
        \sum_{z \in \mathcal{O}_4} \mathsf{ddeg}(z) = 2|\mathcal{O}_4| - 3 - s.
    \end{equation*}
    Lastly, when $a$ and $r$ are both odd, there are two extra orbits $\mathcal{O}_3$ and $\mathcal{O}_4$, as desired. Since the total value $\mathsf{ddeg}$ only depends on $s$ (the quotient of $b$ when dividing by $a+1$) and the size of the orbit, it is homometric under rowmotion. This completes the proof of Theorem \ref{thm.orbit}.
\end{proof}

\section{Concluding remarks}\label{sec.remarks}

In this paper, we study rowmotion on two families of alt $\nu$-Tamari lattices: the alt hook-Tamari lattice $\H_{\delta}(a,b)$, corresponding to the path $\nu=EN^{a-1}E^{b-1}N$; and the alt $2$-row-Tamari lattice $\T_{\delta}(a,b)$, corresponding to the path $\nu=E^aNE^bN$. Our results show that these two families support Conjecture \ref{conj}, which states that the rowmotion orbit structure of an alt $\nu$-Tamari lattice depends only on the path $\nu$ and is independent of the increment vector $\delta$.  Furthermore, these two families also support Conjecture \ref{conjhomometry} regarding the homometry of the down-degree statistic. It is natural to extend Conjecture \ref{conjhomometry} from $\nu$-Tamari lattices to alt $\nu$-Tamari lattices, the statement is given below:
\begin{conjecture}
    Let $\nu$ be a lattice path. The down-degree statistic is homometric for rowmotion on alt $\nu$-Tamari lattices $\Tam_{\delta}(\nu)$ and is independent of the increment vector $\delta$. 
\end{conjecture}

It would be interesting to find a direct explanation of the invariance of the rowmotion orbit structure and the orbit sum of the down-degree statistic for general alt $\nu$-Tamari lattices. Another possible direction is to study whether the switching property (Theorem \ref{thm.switchrow}) used in the $2$-row case can be applied to other families of semidistributive lattices.

\noindent \textbf{Acknowledgements.} 

This research was supported by National Science and Technology Council, Taiwan, through grants 113-2115-M-003-010-MY3 (S.-P. Eu) and 114-2811-M-003-024 (Y.-L. Lee).




\end{document}